\crefname{equation}{}{}
\Crefname{equation}{}{}
\crefname{lem}{lemma}{lemmas}
\crefname{prop}{proposition}{propositions}
\crefname{thm}{theorem}{theorems}
\crefname{cor}{corollary}{corollaries}
\crefname{definition}{definition}{definitions}
\tikzset{>=pxto}
\newcommand{\nocontentsline}[3]{}
\let\origcontentsline\addcontentsline
\newcommand\stoptoc{\let\addcontentsline\nocontentsline}
\newcommand\resumetoc{\let\addcontentsline\origcontentsline}
\newtheorem{prop}[equation]{Proposition}
\newtheorem{theorem}[equation]{Theorem}
\newtheorem{lem}[equation]{Lemma}
\newtheorem{cor}[equation]{Corollary}
\theoremstyle{definition}
\newtheorem{definition}[equation]{Definition}
\newtheorem{rem}[equation]{Remark}
\DeclareMathOperator{\Hom}{Hom}
\DeclareMathOperator{\Map}{Map}
\DeclareMathOperator{\Fun}{Fun}
\DeclareMathOperator*{\colim}{colim}
\DeclareMathOperator{\im}{im}
\DeclareMathOperator{\Cat}{Cat}
\DeclareMathOperator{\op}{op}
\DeclareMathOperator{\Ar}{Ar}
\DeclareMathOperator{\Flow}{Flow}
\DeclareMathOperator*{\amalga}{\amalg}
\DeclareMathOperator{\timest}{\times}
\newcommand{\cA}{\mathcal{A}}
\newcommand{\cC}{\mathcal{C}}
\newcommand{\cD}{\mathcal{D}}
\newcommand{\cS}{\mathcal{S}}
\newcommand{\cL}{\mathcal{L}}
\newcommand{\cR}{\mathcal{R}}
\newcommand{\cF}{\mathcal{F}}
\newcommand{\cI}{\mathcal{I}}
\newcommand{\cT}{\mathcal{T}}
\newcommand{\cQ}{\mathcal{Q}}
\newcommand{\IK}{\mathcal{IK}}
\newcommand{\CSS}{\mathcal{CSS}}
\newcommand{\CsSS}{\mathcal{CSS}_+}
\newcommand{\del}{\partial}
\newcommand{\join}{\star}
\newcommand{\altjoin}{\diamond}
\newcommand{\Simp}{\mathbb{\Delta}}
\newcommand{\Horn}{\Lambda}
\newcommand{\Set}{Set}
\title{Quasi-unital inner Kan spaces}
\author{Trygve Poppe Oldervoll}
\date{}
\begin{document}
	\maketitle
	\begin{abstract}
		We show that semi-simplicial spaces that i) admit inner horn fillers up to homotopy and ii) possess units in a weak sense provide a viable model for $\infty$-categories. The existence of units can be expressed through various quasi-unitality conditions, and we compare the natural generalization of three such conditions found in the literature. This work is motivated by applications in Floer homotopy theory.
	\end{abstract}
	
	\tableofcontents
	
	\section{Introduction}\label{sec:introduction}
	In this paper, we study models for $\infty$-categories where units are encoded as a property rather than structure. To illustrate what we mean by this, consider two well-known models for $\infty$-categories: quasi-categories and Segal spaces. Both of these models are based on simplicial objects (sets or spaces) where an $n$-cell corresponds to a string of $n$ composable morphisms. The compositional structure is encoded by face maps, while the structure of units is encoded by the degeneracy maps. In particular, the degeneracy map $s_0 \colon X_0 \to X_1$ encodes a choice of unit for each object. Moreover, a functor (simplical map) $F\colon X\to Y$ is required to respect this choice. To weaken the rigidity of units, we consider \emph{semi-simplicial} objects. Semi-simplicial objects satisfying analogues of the quasi-category condition or the Segal condition can be thought of as representing $\infty$-\emph{semi-categories}, i.e. higher categories with composition but not necessarily units. To recover a theory of $\infty$-categories, we impose a condition of \emph{quasi-unitality}, which guarantees the existence of units. It turns out that the behaviour of units with respect to composition guarantees that if units exist, they are unique. Quasi-unitality has been studied in the context of quasicategories \cite{henry2018weak} \cite{steimle2018degeneracies} \cite{tanaka2018functors}, as well as in the context of (higher) Segal spaces \cite{harpaz2015quasi} \cite{haugseng2021segal}. Also in the quasi-category model, \cite{AB} makes use of a sufficient condition for quasi-unitality. The goal of this paper is to extend the theory of quasi-unitality for quasi-categories to the context of semi-simplicial spaces admitting inner horn fillers up to homotopy. We will show that this yields a workable model for the theory of $\infty$-categories, and we will show how the different conditions for quasi-unitality in this context relate. 
	
	One caveat is that we choose to work in the $\infty$-category of semi-simplicial spaces. This means that our theory relies on an underlying model for $\infty$-categories. This is no big loss as the foundations of $\infty$-categories are already well-developed in other models such as the quasi-category framework of \cite{HTT}. The arguments and results of this paper are presented in a model-independent way. We see our results as aligning with \cite{mazelgee2025model}, where the author speculates about a Joyal model structure on the $\infty$-category of simplical spaces. The main construction of this paper should be interpreted as a weak model structure (in the sense of \cite{henry2018weak}) on the $\infty$-category of \emph{marked} semi-simplicial spaces, with the main theorem showing that this indeed presents $\Cat_{\infty}$. Because the theory of weak model $\infty$-categories is yet to be developed, we will not state our results in these terms. However, many of our results and arguments are inspired by (weak) model theory. We also expect that the methods in this paper can be used to confirm this speculation made in \cite{mazelgee2025model}.
	
	Our motivation for studying quasi-unital semi-simplicial spaces stems from Floer homotopy theory, where such objects show up when constructing $\infty$-categories of \emph{flow categories}. A flow category should be thought of as a freely generated coherent chain complex in the ``ring'' of manifolds up to cobordism. This is incarnated as a category enriched in a certain monoidal category of manifolds with corners. In \cite{AB}, the authors construct a semi-simplicial set $\Flow$ whose 0-simplices are flow categories. Constructing the face maps here is relatively easy because they only involve passing to full subcategories and/or restricting to certain parts of the boundary of manifolds. Producing degeneracies for $\Flow$ relies on a more involved geometric construction, meaning that it is hard to produce the full coherence for a simplicial set. It is then easier to show that $\Flow$ is quasi-unital. 
	
	We now describe the three different conditions for quasi-unitality and state the main comparison results. We write $ss\cS=\Fun(\Simp_{s}^{\op}, \cS)$ for the $\infty$-category of semi-simplicial spaces. Generalizing \cite{steimle2018degeneracies}, we say that a semi-simplicial space with inner horn fillers is quasi-unital if every vertex admits an idempotent equivalence (see \cref{def:quasi_unital}). Generalising \cite{tanaka2018functors}, we say that a map $F\colon X\to Y$ is quasi-unital if it preserves idempotent equivalences. We then write $\IK_{qu} \subset ss\cS$ for the subcategory spanned by quasi-unital inner Kan spaces and quasi-unital maps between them.
	
	Let $ss\cS_{+}$ denote the $\infty$-category of marked semi-simplicial spaces (see \cref{def:marking}). We say that a marked semi-simplicial space $X$ is a \emph{marked inner Kan space} (see \cref{def:S_CSS+}) if
	\begin{enumerate}
		\item $X$ admits inner horn fillers.
		\item The marked edges are equivalences in $X$ (see \cref{def:equivalence_in_inner_Kan}).
		\item The collection of marked edges satisfies the 2-out-of-6 property.
	\end{enumerate}
	We write $\IK_{+} \subset ss\cS_{+}$ for the full subcategory spanned by such objects. Marked semi-simplicial sets satisfying these conditions are precisely the quasi-unital objects considered in \cite{henry2018weak}. In particular, they are the fibrant objects in the weak Joyal--Lurie model structure on $ss\Set_+$. This notion of quasi-unitality is closely related to the one studied in \cite{harpaz2015quasi}. 
	
	Given a semi-simplicial space $X$, we can obtain a marking on $X$ by marking all the equivalences in $X$. This defines a marked semi-simplicial space $X^{\natural}$. Note that this construction does not determine a functor because semi-simplicial maps are not required to preserve equivalences.
	
	\begin{theorem}\label{thm:quasi_unitality_conditions}
		A semi-simplicial space $X$ is a quasi-unital inner Kan space if and only if $X^{\natural}$ is a marked inner Kan space. A map $F\colon X\to Y$ between quasi-unital inner Kan spaces is quasi-unital if and only if it lifts to a map $F^{\natural}\colon X^{\natural} \to Y^{\natural}$ between marked semi-simplicial spaces.
	\end{theorem}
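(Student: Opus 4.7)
The plan is to establish each of the two equivalences in turn, relying on the characterization of equivalences and idempotent equivalences, and on the homotopy category of a quasi-unital inner Kan space being a genuine (unital) $1$-category.

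For the forward implication of the first statement, suppose $X$ is a quasi-unital inner Kan space. Conditions (1) and (2) for $X^{\natural}$ are immediate: $X$ has inner horn fillers by hypothesis, and the marked edges of $X^{\natural}$ are the equivalences by construction. For condition (3), I will invoke the fact (which should be available from the earlier theory of quasi-unital inner Kan spaces) that the homotopy category $hX$ is an honest unital category in which equivalences of $X$ correspond to isomorphisms. The $2$-out-of-$6$ property for isomorphisms in any $1$-category is formal: if $gf$ and $hg$ are invertible then $g$ has both a right and a left inverse, hence $g$ is invertible, and therefore so are $f$, $h$, and $hgf$.

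For the reverse implication, assume $X^{\natural}$ is a marked inner Kan space and fix a vertex $x\in X_0$. I need to produce an idempotent equivalence $e\colon x\to x$. My approach is to first locate any equivalence touching $x$ using the definition of equivalence in \cref{def:equivalence_in_inner_Kan}, then to use inner horn fillers to build, from such an equivalence $f\colon x\to y$ and a chosen homotopy inverse $g\colon y\to x$, a candidate composite $e:=gf$; idempotence up to homotopy and equivalence-ness of $e$ are then forced by applying $2$-out-of-$6$ to the configuration of $f$, $g$, and their iterates. The main obstacle I expect is precisely in this step, as it requires a careful combinatorial manipulation of inner horns of increasing dimension, in the style of the arguments in \cite{henry2018weak} and \cite{steimle2018degeneracies}.

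For the second statement, the implication ``lift $\Rightarrow$ quasi-unital'' is immediate since the marking of $X^{\natural}$ and $Y^{\natural}$ consists of all equivalences, and idempotent equivalences are in particular equivalences. Conversely, given a quasi-unital $F\colon X\to Y$ and an equivalence $f\colon x\to y$ in $X$, pick a homotopy inverse $g$; then $gf$ and $fg$ are homotopic to idempotent equivalences at $x$ and $y$, whose images under $F$ are idempotent equivalences, and hence equivalences. Applying $2$-out-of-$6$ in $Y^{\natural}$ (which is marked inner Kan by the first part) to $F(f)$, $F(g)$, and a second copy, I conclude that $F(f)$ is an equivalence in $Y$, so $F$ lifts to $F^{\natural}$. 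The hardest piece of the whole proof is the construction in the reverse direction of the first statement; once quasi-unitality is in hand, the map part is a direct application of $2$-out-of-$6$ together with the interaction between idempotent equivalences and composition.
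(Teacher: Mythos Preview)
Your overall outline is in the right spirit, but each of the three pieces contains a gap that the paper handles by a genuinely different mechanism.

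\textbf{Forward direction, 2-out-of-6.} You appeal to ``the homotopy category $hX$ is an honest unital category in which equivalences of $X$ correspond to isomorphisms.'' The problematic half is \emph{iso in $hX$ $\Rightarrow$ equivalence in the sense of \cref{def:equivalence_in_inner_Kan}}: without this, knowing that $f,g,h$ become isomorphisms in $hX$ does not yet give you the horn-filling property back in $X$. In the paper this step is not done via $hX$ at all. Instead one replaces $X$ by a levelwise discrete object along a trivial fibration (\cref{lem:simplicial_set_replacement}), shows that equivalences and quasi-unitality transfer across trivial fibrations (\cref{cor:trivial_fibration_is_conservative}, \cref{cor:trivial_fibration_preserves_quasi_unitality}), invokes Steimle to get an actual quasi-category, identifies equivalences there with invertible edges (\cref{lem:equivalence_in_quasi-category}), and then uses 2-out-of-6 for equivalences in a quasi-category. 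This is \cref{cor:2-out-of-6_for_eqs}.

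\textbf{Reverse direction.} Your sketch (find some equivalence $f$ at $x$, choose a homotopy inverse $g$, set $e=gf$, then argue idempotence and equivalence) is where you yourself flag the difficulty, and indeed this route is hard to complete. The paper avoids the combinatorics entirely: by \cref{cor:tildeU_fibrations}, the sub-semi-simplicial space $X^{\simeq}=\widetilde{U}(X^{\natural})$ of fully-marked simplices is a semi-Kan space; since $\Delta^{0}_{s}\to T$ is anodyne (\cref{lem:unit_is_anodyne}), any vertex $x$ extends to a map $T\to X^{\simeq}$, and the image of the unique $2$-simplex of $T$ is an idempotent equivalence at $x$ on the nose.

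\textbf{Map statement.} Your assertion ``$gf$ and $fg$ are homotopic to idempotent equivalences'' is exactly what needs proof, and your proposed 2-out-of-6 application in $Y$ requires not just the composites but an actual $3$-simplex in $Y$ with $\{0,2\}$ and $\{1,3\}$ edges marked. The paper packages this as \cref{lem:eqs_iff_invertible}: for any equivalence $f$ and any chosen idempotent equivalences $sx,sy$, one can build a $3$-simplex $H$ in $X$ with short edges $f,g,f$ and long edges $sx,sy$ (this construction uses the horn-filling definition of equivalence plus the idempotence witnesses for $sx,sy$). Then $F(H)$ is a $3$-simplex in $Y$ whose $\{0,2\}$ and $\{1,3\}$ edges are $F(sx),F(sy)$, which are idempotent equivalences since $F$ is quasi-unital; now 2-out-of-6 (already established for $Y$) gives that $F(f)$ is an equivalence. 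Your sketch is pointing at this argument but is missing the construction of $H$.
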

	\Cref{thm:outer_degenereacies} can be equivalently restated as saying that we have inverse equivalences
	\begin{align*}
		U : \IK_{+} \leftrightarrows \IK_{qu}: (-)^{\natural}
	\end{align*}
	where $U$ is the restriction of the forgetful functor $ss\cS_{+} \to ss\cS$.
	
	We also consider a sufficient condition for quasi-unitality. In \cite{AB} it is shown that if a semi-simplicial set $X$ admits horn fillers and there exist maps $s_{0},s_{n} \colon X_{n} \to X_{n+1}$ satisfying the expected simplicial identities, then $X$ lifts to a simplicial set. We generalize this condition to semi-simplicial spaces, saying that a semi-simplicial space $X$ \emph{admits outer degeneracies} if there exist maps $s_0, s_n \colon X_{n} \to X_{n+1}$ satisfying the simplicial identities up to suitably coherent homotopies (see \cref{def:outer_degeneracies}).
	\begin{theorem}\label{thm:outer_degenereacies}
		If an inner Kan space $X$ admits initial and terminal degeneracies, then for any vertex $x$, the edge $s_0 x$ is an idempotent equivalence at $x$. In particular, $X$ is quasi-unital. 
	\end{theorem}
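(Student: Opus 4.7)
The plan is to exhibit $s_0 x$ as the desired idempotent equivalence using two pieces of data extracted from the outer-degeneracy structure: the edge $s_0 x \in X_1$ and the 2-simplex $s_0 s_0 x \in X_2$. First, the simplicial identities $d_0 s_0 = d_1 s_0 = \mathrm{id}$ (holding up to coherent homotopy by \cref{def:outer_degeneracies}) show that $s_0 x$ is an edge with both source and target equivalent to $x$. Next, applying $s_0$ again produces a 2-simplex $s_0 s_0 x$ whose three faces, by the identities $d_0 s_0 s_0 = s_0$, $d_1 s_0 s_0 = s_0$, and $d_2 s_0 s_0 = s_0 d_1 s_0 = s_0$, are each equivalent to $s_0 x$. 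Using inner horn fillers to rectify these homotopical equalities into strict ones, one extracts a genuine 2-simplex $\sigma$ with $d_0 \sigma = d_1 \sigma = d_2 \sigma = s_0 x$; this witnesses the idempotency $s_0 x \circ s_0 x \simeq s_0 x$.

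To see that $s_0 x$ is also an equivalence in the sense of \cref{def:equivalence_in_inner_Kan}, I would unpack the outer-degeneracy structure on arbitrary edges adjacent to $x$. For any edge $f \colon y \to x$, the 2-simplex $s_1 f$ has faces $(d_0, d_1, d_2) = (s_0 x, f, f)$ up to homotopy (using $d_0 s_1 = s_0 d_0$ and $d_1 s_1 = d_2 s_1 = \mathrm{id}$), and so exhibits $s_0 x \circ f \simeq f$. Dually, for any $g \colon x \to z$, the 2-simplex $s_0 g$ has faces $(g, g, s_0 x)$ and exhibits $g \circ s_0 x \simeq g$. Thus left and right composition by $s_0 x$ induce the identity up to homotopy on the mapping spaces $\map(y, x)$ and $\map(x, z)$ respectively, from which it follows that $s_0 x$ represents an isomorphism in the homotopy category, and hence is an equivalence.

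The main obstacle will be the coherent-up-to-homotopy nature of the simplicial identities afforded by \cref{def:outer_degeneracies}. Each identity of the form \emph{``$d_i s_j$ equals some prescribed expression''} is supplied as a homotopy in $X$, and to recover the genuine 2-simplices with strictly prescribed faces required by the definitions of idempotency and of equivalence, one must combine these homotopies with inner horn fillers in a controlled way. This is essentially bookkeeping once the correct lifting problems are identified, but care is needed to ensure the coherence data is deployed consistently across the several 2-simplices constructed above.
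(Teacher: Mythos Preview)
Your idempotency argument is essentially the paper's: the $2$-simplex $s_0 s_0 x$ directly witnesses the idempotence of $s_0 x$, and no rectification is needed since \cref{def:idempotent} only asks for $d_i H \sim f$ up to homotopy in the space $X_1$.

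The gap is in your equivalence argument. You construct $2$-simplices $s_0 g$ and $s_\omega f$ showing that $s_0 x$ acts as a two-sided unit on edges, and then invoke ``isomorphism in the homotopy category implies equivalence.'' But \cref{def:equivalence_in_inner_Kan} demands filling $\Horn^{n}_{0,s}$ and $\Horn^{n}_{n,s}$ for \emph{all} $n\geq 2$, not just $n=2$. The implication ``invertible in $\ho(X)$ $\Rightarrow$ equivalence'' is exactly \cref{lem:eqs_iff_invertible}, whose hypothesis is that $X$ is already quasi-unital --- which is what you are trying to prove. Without units, there is no reason a pointwise $2$-simplex identity should propagate to higher horn fillers; indeed, if you try to fill a $\Horn^{n}_{0,s}$ with first edge $s_0 x$ by applying $s_0$ to its $d_1$-face $\tau$, you get $d_i(s_0\tau) = s_0 d_{i-1}\tau$ for $i\geq 2$, which need not agree with the given $i$-th face of the horn.

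The paper avoids this by working one level up in slices. It proves (\cref{lem:sx_is_left_eq}) that $X_{s_0 x/}\to X_{x/}$ is a trivial fibration directly, by factoring the relevant lifting problem through the augmented slice map $X_{\Delta^2_s/}\to X_{\Horn^2_{0,s}/}$ pulled back along a map $S$ built from $s_0$; the key input (\cref{lem:S*X_over_delta*2}) is that this pullback admits lifts against $\underline{\ast}\to J$ for any semi-simplicial set $J$, which is where the full coherence of $s_0$ as a map of augmented semi-simplicial spaces is used. The dual argument with $s_\omega$ handles $X_{/s_\omega x}\to X_{/x}$, and then \cref{lem:eq_fibration_condition} together with $s_0 x\sim s_\omega x$ finishes. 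Your $2$-simplex identities are the $n=2$ shadow of this, but the argument genuinely needs the slice formulation to reach all $n$.
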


	For semi-simplicial sets, one can use the extension results of \cite{steimle2018degeneracies} and \cite{AB} to implicitly do $\inf$-category theory with quasi-unital quasicategories. It is unclear if such extension results generalize to semi-simplicial spaces, i.e. if every object of $\IK_{qu}$ is in the image of the restriction functor $s\cS \to ss\cS$. Instead of pursuing such a result, we show that $\IK_{qu}\simeq \IK_{+}$ itself is a reasonable place to do $\infty$-category theory. We accomplish this by comparing $\IK_{+}$ to the full subcategory $\CsSS \subset ss\cS_{+}$ of \emph{complete semi-Segal spaces} as studied in \cite{harpaz2015quasi}. The inclusion of $\CsSS$ admits a left adjoint $\cL_{\CsSS} \colon ss\cS_{+} \to \CsSS$, and the main theorem of \cite{harpaz2015quasi} gives an equivalence $\CsSS \simeq \Cat_{\infty}$. 
	\begin{theorem}\label{thm:IK+_model}
		The composite functor
		\begin{align*}
			\IK_{qu} \simeq \IK_{+} \to \CsSS \simeq \Cat_{\infty}
		\end{align*}
		exhibits a localization at the trivial fibrations. Moreover, it preserves:
		\begin{enumerate}
			\item pullbacks where one leg is a fibration.
			\item inverse limits of Reedy fibrant diagrams.
			\item exponential objects $X^{J}$ where $J$ is a semi-simplicial set.
			\item slice objects $X_{p/}$ and $X_{/p}$ where $p\colon J \to X$ is any map from a semi-simplicial set.
		\end{enumerate}
	\end{theorem}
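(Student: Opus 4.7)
The plan is to use \Cref{thm:quasi_unitality_conditions} to identify $\IK_{qu}$ with $\IK_+$, reducing the statement to analysing $F := \cL_{\CsSS}|_{\IK_+}\colon \IK_+ \to \CsSS$. I would first verify that $\CsSS \subseteq \IK_+$: the Segal condition implies inner horn fillers up to homotopy, completeness forces the marked edges to be equivalences, and equivalences in an $\infty$-category always satisfy the 2-out-of-6 property. Thus $F$ is the restriction of the reflective left adjoint $\cL_{\CsSS}$ onto a full subcategory, and is in particular essentially surjective.

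To prove $F$ exhibits $\CsSS$ as the localization of $\IK_+$ at trivial fibrations, I would adopt a Brown-style calculus-of-fractions argument. Trivial fibrations are stable under pullback and retract, and one checks directly that they are inverted by $F$ (e.g.\ because sections against the generating cofibrations become sections after reflection). Conversely, any $f\colon X \to Y$ in $\IK_+$ can be factored as $X \xrightarrow{j} Z \xrightarrow{p} Y$ with $p$ a trivial fibration and $j$ an iterated pushout of boundary inclusions, in such a way that $Z$ remains in $\IK_+$. When $F(f)$ is an equivalence, so is $F(j)$; using the fibrancy of $X$, one constructs a retraction $r\colon Z\to X$ of $j$ which is itself a trivial fibration. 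This presents $f$ as a zig-zag of trivial fibrations and hence as invertible in the localization. Mapping-space full faithfulness of the induced functor is then automatic from the reflective adjunction.

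For the preservation statements, (1) follows from trivial fibrations being stable under pullback, so that pullbacks along fibrations in $\IK_+$ compute homotopy pullbacks and are preserved by $F$. Claim (2) is similar, using that Reedy fibrant diagrams present homotopy limits. For (3), a semi-simplicial set $J$ is cofibrant (a levelwise coproduct of representables, with no marked edges), so the cotensor $X^J$ inherits the marked inner Kan conditions pointwise and $\cL_{\CsSS}$ commutes with cotensoring by such cofibrant objects. Claim (4) is deduced from (3) via the join-based construction of the slices, combined with (1). The principal obstacle is step two: making the Brown factorization precise and identifying the correct generating (trivial) cofibrations in the marked semi-simplicial setting without access to a fully developed weak model $\infty$-category framework. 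Verifying the cotensor-compatibility of $\cL_{\CsSS}$ required for (3) is a further delicate point, as it demands explicit control of how the reflection acts on cofibrant inputs.
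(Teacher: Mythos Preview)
Your proposal has a genuine gap in the localization argument, and the preservation claims rest on unproven compatibilities.

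For the localization: you factor $f = p \circ j$ with $p$ a trivial fibration and $j$ a marked cofibration, and then claim that when $F(j)$ is an equivalence, fibrancy of $X$ yields a retraction $r\colon Z \to X$. But fibrancy of $X$ only produces such a retraction when $j$ has the left lifting property against $X \to T^{\sharp}$, i.e.\ when $j$ is \emph{marked inner anodyne}. Knowing merely that $j$ is a cofibration with $\cL_{\CsSS}(j)$ invertible does not give this: the identification ``$\CsSS$-equivalence $\cap$ cofibration $=$ marked inner anodyne'' is exactly the kind of model-categorical fact that fails to be automatic in a weak model structure, and the paper never establishes it. Even granting a retraction $r$, there is no reason $r$ should be a \emph{trivial} fibration, so you do not obtain the promised zig-zag.

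The paper's route is essentially different and bypasses this obstacle. It builds an explicit functor $\cT = p_! t^*$ via a Joyal--Tierney calculus on marked bi-semi-simplicial spaces, proves the formula $\cT(X)(J) \simeq \Vert \widetilde{U}(X^{J}) \Vert$, and shows directly that the unit $u_X \colon X \to \cT(X)$ is a trivial fibration (\cref{lem:u_is_trivial_fib}). This immediately gives the localization statement without any Brown-type factorization, and the explicit realization formula is then what drives all four preservation results: pullbacks along fibrations and Reedy limits are preserved because $\widetilde{U}((-)^{J})$ carries them to Kan-fibrant diagrams whose realizations commute with the relevant limits, and the exponential and slice statements reduce to manipulations of the same formula. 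Your appeals to ``homotopy pullbacks'' and ``$\cL_{\CsSS}$ commutes with cotensoring by cofibrant objects'' are restatements of the conclusions, not arguments for them; absent the explicit unit or formula, there is no mechanism to compare $\cL_{\CsSS}(X \times_Y Z)$ or $\cL_{\CsSS}(X^{J})$ with the corresponding construction on the localizations.
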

	\stoptoc
	\subsection*{Organization}
	The paper is organized as follows:
	\begin{itemize}
		\item In \cref{sec:simplicial_sets_and_spaces} we set up notation and basic constructions for (marked) (semi-)simplicial spaces.
		\item In \cref{sec:semi_kan_spaces} we show some basic results about the geometric realization of semi-simplicial spaces.
		\item In \cref{sec:marked_inner_kan_spaces}, we set up the ``weak model structure'' on marked semi-simplicial spaces and use this to prove basic properties of the localization $\cT$.
		\item In \cref{sec:slices_and_functor_categories} we study the behaviour of $\cT$ with respect to exponentials and slices.
		\item In \cref{sec:alternative_condition_for_quasi_unitality} we show how the different notions of quasi-unitality relate, proving \cref{thm:IK+_model} and \cref{thm:outer_degenereacies}.
	\end{itemize}
	\subsection*{Acknowledgements}
	The idea for this paper originated while I was pursuing joint work with Alice Hedenlund, and I would like to thank her for useful discussions. I would also like to thank Rune Haugseng for helpful conversations about $\infty$-categories, and for reading a draft of this paper. 
	\resumetoc

	\section{Simplicial sets and spaces}\label{sec:simplicial_sets_and_spaces}
	Throughout this paper, we fix a strongly inaccessible cardinal $\kappa$. We write $\Set$ and  $\Cat$ for the large categories of $\kappa$-small sets and categories, respectively. We take a model-independent view on $\infty$-categories, assuming the existence of (large) $\infty$-categories $\cS \subset \Cat_{\infty}$ of $\kappa$-small spaces and $\infty$-categories, respectively. For concreteness, one could fix the quasicategory model based on $\kappa$-small simplicial sets as in \cite{HTT}. We will freely use standard results about $\infty$-categories, such as the Yoneda lemma and the adjoint functor theorem. One key assumption we make about our underlying model is that $\Cat_{\infty}$ can be realized as the full subcategory of simplicial objects in $\cS$ spanned by \emph{complete Segal spaces}. This is justified by \cite{joyal2007quasi}. Throughout, we will treat $\Set \subset \cS$ and $\Cat \subset \Cat_{\infty}$ as full sub-categories spanned by the discrete objects.
	
	The \emph{simplex category} $\Simp$ has objects $[n] = \left\{ 0 \leq \dots \leq n \right\}$ for $0\leq n$, and morphisms order-preserving maps. We let $\Simp_{s}\subset \Simp$ denote the wide subcategory of injective maps. We denote the categories of (semi-)simplicial spaces and sets by
	\begin{align*}
		s\cS &= \Fun(\Simp^{\op}, \cS) & ss\cS &= \Fun(\Simp_{s}^{\op}, \cS) \\
		s\Set &= \Fun (\Simp^{\op}, \Set) & ss\Set &= \Fun(\Simp^{\op}_{s}, \Set )\, .
	\end{align*}
	Under the inclusion $\Set \subset \cS$, we think of a (semi-)simplicial set as also being a levelwise discrete (semi-)simplicial space.

	Let $X$ be a (semi-)simplicial space. We write $X_n = X([n])$ for its value at the $n$-simplex.
	Through the Yoneda embedding, we can think of $X$ as a functor $(s)s\cS^{\op} \to \cS$, at a (semi-)simplicial space, this takes the value
	\begin{align*}
		X(K) \coloneq \Map_{s\cS}(K, X) = \lim_{\sigma\colon \Delta^{n}\to K} X_n \,.
	\end{align*}
	Restriction along the inclusion $\Simp^{\op}_{s} \subset \Simp^{\op}$ induces a  forgetful functor
	\begin{align*}
		\cF\colon s\cS \to ss\cS \,.
	\end{align*}
	This has both left and right adjoints given by Kan extension, which we denote by $\cL$ and $\cR$, respectively.
	The $n$-simplex $\Delta^{n} \in s\Set$ is the image of $[n]$ under the Yoneda embedding $\Simp \to s\cS$
	Likewise, the semi-simplicial $n$-simplex $\Delta^{n}_s \in ss\Set$ is the image of $[n]$ under the Yoneda embedding $\Simp_{s} \to ss\cS$. It follows immedeatly that $\Delta^{n} = \cL(\Delta^{n}_s)$.
	Several basic simplicial sets, such as the $(i,n)$-horn $\Horn^{n}_{i}$ and the boundary $\del \Delta^{n}$, can be constructed by gluing simplices along injective maps. This means that the colimit diagrams defining them lift to $ss\cS$, so we have semi-simplicial counterparts $\del \Delta^{n}_{s}$ and $\Horn^{n}_{i,s}$ satisfying $\cL( \del \Delta^{n}_{s} ) = \del \Delta^{n}$ and $\cL(\Horn_{i,s}^{n}) = \Horn^{n}_{i}$. 
	We will use $T$ to denote the terminal semi-simplicial set $T_{n}=\ast$. This is also the underlying semi-simplicial set $\cF(\Delta^{0})$.
	\begin{rem}\label{rem:terminal_is_not_unit}
		Note that the terminal object $T$ is \textit{not} equivalent to the zero simplex $\Delta^{0}_{s}$. The zero simplex $\Delta^{0}_{s}$ is concentrated in degree $0$, meaning that there can be no map $T\to \Delta^{0}_{s}$. This is part of a phenomenon that makes semi-simplicial objects harder to work with than their simplicial counterparts: for degree reasons, there are not enough maps.
	\end{rem}
	For a simplicial space $X$, we can form its geometric realization, which we denote
	\begin{align*}
		\vert X \vert = \colim_{\Simp^{\op}}(X) \,.
	\end{align*}
	For a semi-simplicial space $Y$, we can form the realization $\Vert Y \Vert$ by instead taking the colimit over $\Simp^{\op}_{s}$. By \cite{HTT} Lemma 6.5.37, the inclusion $\Simp^{\op}_{s} \to \Simp^{\op}$ is cofinal, so restriction induces an equivalence
	\begin{align*}
		\Vert \cF X \Vert \overset{\simeq}\to \vert X \vert \,.
	\end{align*}
	The category $s\cS$ has a closed monoidal structure given by the cartesian product
	\begin{align*}
		(X\timest Y)_n = X_n \timest Y_n && (X^{Y})_n = \Map(\Delta^{n}\timest Y, X) \,.
	\end{align*}
	In the category $ss\cS$, the cartesian product is not the homotopically correct monoidal structure, but rather the geometric product $\otimes$ as in \cite{rourke1971delta}. This is uniquely defined by preserving colimits in each variable, and $\Delta^{n}_{s} \otimes \Delta^{m}_{s}$ being the nondegenerate simplices in $\Delta^{n}\timest \Delta^{m}$. This product has the property
	\begin{align*}
		\cL(X\otimes Y) = \cL(X) \timest \cL(Y),
	\end{align*}
	which holds for semi-simplicial sets by \cite{rourke1971delta}, and therefore for all $ss\cS$ by colimit preservation. We also get internal mapping objects 
	\begin{align*}
		(X^{Y})_n = \Map(\Delta^{n} \otimes Y,X)\, .
	\end{align*}
	
	Recall that a map of spaces $f\colon X\to Y$ is a \emph{monomorphism} if it is injective on $\pi_{0}$, and an equivalence on all higher homotopy groups for any choice of basepoint in $X$. This means that up to equivalence, $f$ is the inclusion of some connected components of $Y$. Dually, we say that $f$ is an \emph{epimorphism} if it is surjective on $\pi_{0}$. Any map of spaces $f\colon X\to Y$ has a unique factorization $X\to \im(f) \to Y$ as an epimorphism followed by a monomorphism.
	\begin{definition}\label{def:marking}
		A \emph{marking} of a semi-simplicial space $X$ is a monomorphism of spaces $A\to X_1$.
		A \emph{marked semi-simplicial space} is a pair $(X,A)$ where $A$ is a marking of $X$.
		A map of marked semi-simplicial spaces $(X,A) \to (Y,B)$ is a map of simplicial spaces $X\to Y$ such that the induced map $A\to X_1 \to Y_1$ factors through $B$ up to homotopy.
	\end{definition}
	
	\begin{rem}\label{rem:marked_fibrant}
		Instead of dealing with the notion of \textit{marked fibrant} semi-simplicial spaces as in \cite{harpaz2015quasi}, we include $A\to X_1$ being a monomorphism as part of the definition.
		This just means that whenever we perform any construction on marked simplicial sets that would produce a non-monic $f\colon A\to X_1$, we just replace it by its unique image $\im(f)\to X_1$.
	\end{rem}
	We denote the category of marked semi-simplicial spaces by $ss\cS_{+}$.
	This has an obvious forgetful functor
	\begin{align*}
		U\colon ss\cS_{+} \to ss\cS
	\end{align*}
	which has both a left adjoint $X^{\flat} = (X,\emptyset)$, and a right adjoint $X^{\sharp} = (X,X_1)$.
	The functor $(-)^{\sharp}$ has a further right adjoint
	\begin{align*}
		\widetilde{U} \colon ss\cS_{+} \to ss\cS
	\end{align*}
	which takes a marked semi-simplicial $(X,A)$ to the space of fully marked simplices in $X$, i.e.
	\begin{align*}
		\widetilde{U}(X)_n = \Map(\Delta^{n,\sharp}_{s}, X)\,.
	\end{align*}
	Restriction along $\Delta^{n,\flat}_{s} \to \Delta^{n,\sharp}_{s}$ gives a map $\widetilde{U}(X) \to U(X)$, and we can recover $X$ as the pushout
	\begin{align}\label{eq:marking_pushout}
		X \simeq U(X)^{\flat} \underset{\widetilde{U}(X)^{\flat}}{\amalga} \widetilde{U}(X)^{\sharp}\, .
	\end{align}
	
	By \cite{harpaz2015quasi} there are natural ways to mark $X\otimes Y$ and $X^{Y}$, giving a closed monoidal structure $\otimes$ on $ss\cS_{+}$ such that $U$ lifts to a monoidal functor.
	
	\begin{definition}\label{def:augmented}
		Let $\Simp_{a}$ denote the category whose objects are $[n]$ for $n\geq -1$ with the convention $[-1] =\emptyset$, and order-preserving maps. The category of \emph{augmented simplicial spaces} is the presheaf category $s\cS_{a}= \Fun(\Simp_{a}, \cS)$. Consider also the wide subcategory $\Simp_{s, a} \subset \Simp_{a}$ of injective morphisms, and the corresponding presheaf category $ss\cS_{a} = \Fun(\Simp_{a},\cS)$ of \emph{augmented semi-simplicial spaces}.
	\end{definition}
	Right Kan extension along the inclusion $\Simp_{(s)} \to \Simp_{(s),a}$ determines fully faithful functors 
	\begin{align*}
		s\cS \to s\cS_{a}, \quad ss\cS \to ss\cS_{a}
	\end{align*}
	with image spanned by the objects $X$ such that $X([-1]) \simeq \ast$. 
	
	The augmented simplicial category $\Simp_{a}$ carries a monoidal structure given by the \emph{join} of ordered sets, $[n]\join [m] = [n+m+1]$, whose unit is $[-1]$. Because the join of two injective maps is again injective, this restricts to a monoidal structure on $\Simp_{s,a}$. By Day convolution, this induces a monoidal structure on $s\cS_{a}$ which is uniquely determined by preserving colimits in each variable separately, and $\Delta^{n}\join\Delta^{m} = \Delta^{n+m+1}$ for $-1\leq n,m$. The Day convolution can be computed explicitly as
	\begin{align}\label{eq:join_formula}
		(J\join K)_n \simeq \coprod_{i+j = n-1} J_i \timest K_{j}\, .
	\end{align}
	We note in particular that if $J_{-1} \simeq K_{-1} \simeq \ast$, then $(J\join K)_{-1} \simeq \ast$, so $\join$ restricts to a monoidal structure on $s\cS$. 
	
	Similarly, we get a Day convolution monoidal structure on $ss\cS_{a}$, which restricts to a monoidal structure on $ss\cS$, also given by the formula \cref{eq:join_formula}. Because the inclusion $\Simp_{s,a} \to \Simp_{a}$ is monoidal, the left Kan extension along this functor gets a natural monoidal structure, which restricts to a monoidal structure on $\cL \colon ss\cS \to s\cS$. In particular, we have natural equivalences
	\begin{align*}
		\cL(J\join K ) &\simeq \cL(J)\join \cL(K)\, . \\
	\end{align*}
	If $(J,A)$ and $(K,B)$ are marked (semi-)simplicial spaces, we get a marking on $J\join K$ by the map
	\begin{align*}
		A\amalga B \to  J_1 \amalga K_1 \amalga J_0\timest K_0 = (J\join K)_1\, .
	\end{align*}
	
	As in \cite{HTT}, we also consider the \emph{alternative join} for semi-simplicial spaces given by the formula
	\begin{align*}
		J \altjoin K &= J \underset{(J\timest K \timest \Delta^{ \left\{ 0\right\} } )}{\amalga} (J \timest K \timest \Delta^{1}) \underset{(J\timest K \timest \Delta^{ \left\{ 1\right\} } )}{\amalga} K \, .
	\end{align*}
	The two notions of join give rise to two notions of \textit{slice}. The functor
	\begin{align}\label{eq:join_functor}
		J\join - \colon s\cS \to s\cS_{J/}
	\end{align}
	preserves colimits, and therefore admits a right adjoint. We denote the value of this right adjoint at a map $p\colon J\to X$ by $X_{p/}$, called the \emph{slice under} $p$. Explicitly $X_{p/}$ is given by the formula
	\begin{align*}
		(X_{p/})_{n} &= \Map(J\join\Delta^{n}, X ) \timest_{\Map(J,X)  } \left\{ p\right\}\, .
	\end{align*}
	By replacing \cref{eq:join_functor} with the join for (marked) semi-simplicial spaces, we get a notion of slice under a map $p\colon J\to X$ of (marked) semi-simplicial spaces.
	
	The alternative slice is similarly defined in terms of the right adjoint of
	\begin{align*}
		J\altjoin - \colon s\cS \to s\cS_{J/}\, ,
	\end{align*}
	and we write $X^{p/}$ for the vaule at a map $p\colon J\to X$. 
	
	There is a good reason for studying these two different notions of slice. The slice $X_{/p}$ is more natural from the simplicial point of view because $\Delta_{s}^{n-1}\join \Delta_{s}^{m-1} = \Delta_{s}^{n+m-1}$, so for instance when $J= \Delta^{0}$,
	\begin{align*}
		(X_{/p})_{n} = X_{n+1} \timest_{X_{0}} \left\{ p\right\}\, .
	\end{align*}
	To explicitly describe the simplices of $X^{p/}$ requires describing the simplices of $X^{\Delta^{1}}$, which involves decomposing the cylinder $\Delta^{n}\timest \Delta^{1}$. The slice $X_{p/}$ also has the big advantage of being defined for semi-simplicial spaces, while the $X^{p/}$ is harder to make sense of here because the maps used to define $J\altjoin K$ are not semi-simplicial. This could potentially be solved by introducing some factors $T^{\sharp}$, but this would make the slice $X^{p/}$ even more complicated to explicitly describe.

	The slice $X^{p/}$, on the other hand, is the correct slice from a categorical standpoint. The following lemma shows that $W^{p/}$ is the correct slice for internally reasoning about colimits in the complete Segal space $W$. (It agrees, for instance, with the slice used in \cite{rasekh2018introductioncompletesegalspaces} and \cite{MW2021}.)
	\begin{lem} \label{lem:alt_undercat_pullback}
		Let $W$ be a simplicial space, and $x\in W_{0}$ a vertex. Then the following square is a pullback.
		\begin{equation*}
			\begin{tikzcd}
				W^{x/} \arrow[r] \arrow[d] & W^{\Delta^{1}} \arrow[d, "ev_0"] \\
				\left\{ x\right\} \arrow[r] & W
			\end{tikzcd}
		\end{equation*}
		Let $J$ be a simplicial space, and $p\colon J \to W$ a map. We let $\widetilde{p}$ denote the point in $X^{J}$ determined by $p$. Then the following square is a pullback.
		\begin{equation*}
			\begin{tikzcd}
				W^{p/} \arrow[r] \arrow[d] & (W^{J} )^{\widetilde{p}/} \arrow[d] \\
				W \arrow[r]& W^{J}
			\end{tikzcd}
		\end{equation*}
	\end{lem}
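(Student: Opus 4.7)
The plan is to prove both pullback squares by testing against an arbitrary simplicial space $K$ via the Yoneda lemma, using the defining adjunction $J\altjoin (-)\dashv (-)^{J/}$: for a map $p\colon J\to W$,
\begin{align*}
\Map(K, W^{p/}) \simeq \Map(J\altjoin K, W) \times_{\Map(J, W)} \{p\}.
\end{align*}

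For the first square, specialize to $J=\Delta^{0}$. Since $\Delta^{0}$ is terminal, the right-hand pushout in the definition of $\altjoin$ collapses to the identity on $K$, leaving $\Delta^{0}\altjoin K = \Delta^{0} \amalg_{K\times \{0\}} (K\times \Delta^{1})$. Mapping out of this pushout gives
\begin{align*}
\Map(\Delta^{0}\altjoin K, W) \simeq W_{0} \times_{\Map(K, W)} \Map(K, W^{\Delta^{1}}),
\end{align*}
where the left leg is the ``constant simplex'' map induced by $K\to \Delta^{0}$ and the right leg is post-composition with $\mathrm{ev}_{0}$. Pulling back over $\{x\}\hookrightarrow W_{0}$ and pushing $\Map(K,-)$ inside the pullback gives $\Map(K, W^{x/}) \simeq \Map(K, \{x\}\times_{W} W^{\Delta^{1}})$, so the Yoneda lemma yields the first square.

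For the second square, the double pushout defining $J\altjoin K$ produces an iterated pullback on mapping spaces
\begin{align*}
\Map(J\altjoin K, W) \simeq \Map(J, W) \times_{\Map(J\times K, W)} \Map(J\times K\times \Delta^{1}, W) \times_{\Map(J\times K, W)} \Map(K, W).
\end{align*}
The tensor--hom adjunction identifies $\Map(J\times K\times \Delta^{1}, W) \simeq \Map(K, (W^{J})^{\Delta^{1}})$ and $\Map(J\times K, W) \simeq \Map(K, W^{J})$. Imposing $\Map(J,W) = \{p\}$, the leftmost factor becomes $\{p\} \times_{\Map(K, W^{J})} \Map(K, (W^{J})^{\Delta^{1}})$, which by the first square applied to $W^{J}$ and $\tilde{p}$ equals $\Map(K, (W^{J})^{\tilde{p}/})$. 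The remaining outer pullback then becomes $\Map(K, (W^{J})^{\tilde{p}/} \times_{W^{J}} W)$, and Yoneda concludes.

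The only genuine obstacle is bookkeeping: tracking that the two pushout legs in $J\altjoin K$ evaluate to $\mathrm{ev}_{0}$ and $\mathrm{ev}_{1}$ respectively, and verifying that the right and bottom maps in the second square are the ``target'' evaluation $(W^{J})^{\tilde{p}/} \to W^{J}$ coming from $\mathrm{ev}_{1}$ and the constant map $W = W^{\Delta^{0}} \to W^{J}$ induced by $J\to \Delta^{0}$, so that the final pullback is formed along the intended legs.
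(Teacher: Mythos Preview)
Your proof is correct and follows essentially the same approach as the paper: both compute $\Map(K, W^{p/})$ via the defining adjunction, expand $W(J\altjoin K)$ as an iterated pullback using the pushout definition of $\altjoin$, and then use the tensor--hom adjunction to rewrite the terms as $W^{J}(K)$ and $W^{J\times\Delta^{1}}(K)$. The only organizational difference is that the paper derives the single formula $W^{p/} \simeq \Delta^{0}\times_{W^{J}} W^{J\times\Delta^{1}} \times_{W^{J}} W$ first and then reads off both squares (the second via the ``pasting argument'' you spell out explicitly), whereas you do the $J=\Delta^{0}$ case first and feed it into the general case; these are the same argument in a different order.
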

	\begin{proof}
		Using the universal property of the join and exponentials, we have
		\begin{align*}
			W^{p/}(K) &\simeq \left\{ p\right\} \times_{W(J)} W(J\altjoin K) \\
			&\simeq \left\{ p\right\}\times_{W(J)} W(J) \times_{W(J\times K)} W(J\times K \times \Delta^{1}) \times_{W(J\times K)} W(K) \\
			&\simeq \left\{ p \right\} \times_{W^{J}(K)} W^{J\times \Delta^{1}}(K) \times_{W^{J}(K)} W(K).
		\end{align*}
		These equivalences are natural in $K$, so we have
		\begin{align*}
			W^{p/}= \Delta^{0}\timest_{W^{J}} W^{J\timest \Delta^{1} } \timest_{W^{J}} W.
		\end{align*}
		This gives the result for $J=\Delta^{0}$ and by a pasting argument, the result holds for arbitrary $J$.
	\end{proof}
	\begin{rem}\label{rem:mapping_space_from_slice}
		In particular, if $W$ is a complete Segal space, the fiber of $W^{x/}$ at a point $y\colon \Delta^{0} \to W$ is given by the fiber of $W^{\Delta^{1}} \to W\timest W$ at $(x,y)$, so its zeroth space is by definition the mapping space $\Map_{W}(x,y)$. 
	\end{rem}
	Finally, we consider \emph{free slices} of augmented semi-simplicial spaces. For $J\in ss\cS_{a}$, the functor
	\begin{align*}
		J\join - \colon ss\cS_{a} \to ss\cS_{a}
	\end{align*} 
	preserves colimits, and we write $X_{J/}$ for the value of its right adjoint at an object $X$. By definition, this has 
	\begin{align*}
		\Map(K,X_{J/}) \simeq \Map(J\join K, X)\, .
	\end{align*}
	As for the ordinary slice, $X_{J/}$ is covariantly functorial and limit preserving in the $X$ variable, and contravariantly functorial mapping colimits to limits in the $J$ variable.  
	Because $[-1]$ is initial in $\Simp_{a}$, evaluation at $[-1]$ is left adjoint to the functor mapping a space $S$ to the constant augmented semi-simplicial space $\underline{S}$ at $S$. For an object $X\in ss\cS_{a}$, and a point $x\in X_{-1}$, we define $X_{x}$ by the pullback
	\begin{equation*}
		\begin{tikzcd}
			X_{x} \arrow[r] \arrow[d]      &   X  \arrow[d] \\
			\underline{\ast}  \arrow[r, "\underline{x}"]               &   \underline{X_{-1}}
		\end{tikzcd}
	\end{equation*}
	where the right vertical is the unit of the adjunction. By evaluating this pullback at $[-1]$ we see that $X_{x}$ is a semi-simplicial space. This construction of $X_{x}$ is natural in pointed objects of $ss\cS_{a}$. Essentially by definition, we have the following lemma.
	\begin{lem}\label{lem:free_slice}
		For semi-simplicial spaces $J$ and $X$, we have $(X_{J/})_{-1} = \Map(J,X)$. With this identification, we have natural equivalences
		\begin{align*}
			(X_{J/})_{p} \simeq X_{p/}\, .
		\end{align*}
	\end{lem}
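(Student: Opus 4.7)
The plan is to verify both claims by directly unwinding the relevant universal properties, using the facts that $\Delta^{-1}_s$ is the unit for the join and that the embedding $ss\cS \hookrightarrow ss\cS_a$ sends $K$ to an object with $K_{-1}\simeq \ast$.

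First, I would compute $(X_{J/})_{-1}$ by applying the defining adjunction of $X_{J/}$ to $K = \Delta^{-1}_s$, the representable at $[-1]$, which is the unit of $\join$:
\begin{align*}
    (X_{J/})_{-1} \simeq \Map_{ss\cS_a}(\Delta^{-1}_s, X_{J/}) \simeq \Map_{ss\cS_a}(J\join \Delta^{-1}_s, X) \simeq \Map(J, X)\,.
\end{align*}

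For the second claim, I would argue levelwise. By construction $(X_{J/})_p$ is a pullback in $ss\cS_a$, and evaluating at $[-1]$ yields $\ast$, so it lies in the image of the embedding $ss\cS \hookrightarrow ss\cS_a$. For $n\geq 0$, the fiber sequence
\begin{align*}
    ((X_{J/})_p)_n \simeq (X_{J/})_n \times_{(X_{J/})_{-1}} \{p\}
\end{align*}
identifies, via the join adjunction, with $\Map(J\join \Delta^n_s, X) \times_{\Map(J,X)} \{p\}$, where the map to $\Map(J,X)$ is restriction along the canonical $J \simeq J\join\Delta^{-1}_s \to J\join \Delta^n_s$ in $ss\cS_a$. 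By the defining formula for $X_{p/}$ this is exactly $(X_{p/})_n$. The identifications are natural in the injective maps $[m]\to [n]$ and in $p$, so we obtain $(X_{J/})_p \simeq X_{p/}$ as semi-simplicial spaces.

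The only potential obstacle is bookkeeping between the augmented and non-augmented settings: one needs to confirm that the map $(X_{J/})_n \to (X_{J/})_{-1}$ arising from the unique morphism $[-1]\to[n]$ in $\Simp_{s,a}$ corresponds, under the join adjunction, to precomposition with $J \join \Delta^{-1}_s \to J\join \Delta^n_s$, and that the join formula $(J\join K)_{-1} \simeq \ast$ is compatible with the convention $(\Delta^n_s)_{-1}\simeq \ast$ under right Kan extension. Once this is checked, the rest is formal.
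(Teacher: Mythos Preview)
Your proposal is correct and is exactly the unwinding of definitions that the paper has in mind; the paper does not give an explicit proof, stating only that the lemma holds ``essentially by definition.'' Your verification that the map $(X_{J/})_n \to (X_{J/})_{-1}$ corresponds under the join adjunction to restriction along $J \simeq J\join\Delta^{-1}_s \to J\join\Delta^n_s$ is the only point requiring care, and you have identified it correctly.
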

	There is a symmetric definition of the slice $X_{/J}$, which satisfies the obvious properties. 
	\section{Semi-Kan spaces}\label{sec:semi_kan_spaces}
	Before moving on to $\infty$-categories, we show how semi-simplicial spaces satisfying the Kan condition give a reasonable model for $\cS$. We begin by recalling the basics of lifting properties in $\infty$-categories.
	\begin{definition}
		Let $j\colon J\to K$ and $f\colon X\to Y$ be morphisms in an $\infty$-category $\cC$. We say that $j$ has the \emph{left lifting property} with respect to $f$, or equivalently that $f$ has the \emph{right lifting property} with respect to $j$ if for any solid diagram as below, there exists a dashed arrow making the diagram commute.
		\begin{equation}\label{eq:lifting_diagram}
			\begin{tikzcd}
				K \arrow[d, "j"] \arrow[r] & X\arrow[d, "f"]\\
				J \arrow[r] \arrow[ur, dotted] & Y
			\end{tikzcd}
		\end{equation}
		Equivalently, this means that the map of spaces
		\begin{align} \label{eq:lifting_pullback}
			\Map_{\cC}(J,X) \to \Map_{\cC}(K,X) \timest_{\Map_{\cC}(K,Y)} \Map_{\cC}(J,Y)
		\end{align}
		is an epimorphism.
		If the lift in \cref{eq:lifting_diagram} is unique up to homotopy, (i.e. if the map \cref{eq:lifting_pullback} is an equivalence) we say that $j$ is \emph{left orthogonal to} $f$.
	\end{definition}
	\begin{definition}
		Let $S$ be a collection of morphisms in an $\infty$-category $\cC$. We say that a morphism $f$ in $\cC$ is an $S$-\emph{fibration} if $f$ has the right lifting property with respect to every $j\colon K\to J$ in $S$. If $\cC$ has a terminal object $T$, we say that an object is $S$-\emph{fibrant} if the unique map $X\to T$ is an $S$-fibration.
	\end{definition}
	\begin{rem}\label{rem:closure_of_fibrations}
		The collection of $S$-fibrations is closed under (transfinite) composition, pullbacks, and retracts. The same holds for the collection of maps that are orthogonal to $S$, which is moreover closed under limits in $\Ar(\cC)$.
	\end{rem}
	\begin{definition}\label{def:S-cof}
		For a collection of morphisms $S$ in an $\infty$-category $\cC$, we say that a morphism $j\colon J\to K$ is an $S$-\emph{cofibration} if every $S$-fibration $f\colon X\to Y$ has the right lifting property with respect to $j$. We write $S$-cof for the class of $S$-cofibrations. If $\cC$ has an initial object $\emptyset$, we say that an object $J$ in $\cC$ is $S$-\emph{cofibrant} if the unique morphism $\emptyset \to X$ is an $S$-cofibration.
	\end{definition}
	\begin{rem}\label{rem:cof_weakly_saturated}
		The collection of $S$-cofibrations is closed under (transfinite) composition, pushouts, and retracts, making it a \emph{weakly saturated class}. By definition, every morphism in $S$ is an $S$-cofibration. For classes $S$ admitting a small object argument (see \cite{mazelgee2025model}), the class of $S$-cofibrations is in fact the smallest weakly saturated class containing $S$. This will be the case for all the lifting properties considered in this paper.
	\end{rem}
	
	In this section, we show how the usual lifting properties of the Kan--Quillen model structure on $s\Set$ extend to make $ss\cS$ a reasonable model for the theory of spaces. 
	
	\begin{definition}\label{def:kan_fibration}
		Consider the following collections of morphisms in $ss\cS$:
		\begin{enumerate}
			\item $J_{KQ} = \left\{\Horn^{n}_{s,i} \to \Delta^{n}, 0\leq i \leq n \right\}$.
			\item $I_{KQ} = \left\{\del \Delta_{s}^{n} \to \Delta_{s}^{n}, 0\leq n \right\}$.
		\end{enumerate}
		we call $J_{KQ}$-fibrations \emph{Kan fibrations}, $J_{KQ}$-cofibrations \emph{anodyne}, and $I_{KQ}$-fibrations trivial Kan fibrations.
	\end{definition}
	\begin{rem}
		Because the boundary inclusions generate all levelwise injections of semi-simplicial sets under pushouts and retracts, $I_{KQ}$-cof contains all levelwise injections between semi-simplicial sets. In particular, $J_{KQ} \subset I_{KQ}$-cof, so every trivial Kan fibration is in particular also a Kan fibration.
	\end{rem}
	\begin{rem}
		Because pushouts with one injective leg are preserved by the inclusion $\Set \to \cS$, any $I_{KQ}$-cofibration with levelwise discrete source must also have levelwise discrete target. In particular, the cofibrant objects (those such that $\emptyset \to J$ is a cofibration) are precisely the semi-simplicial sets. 
	\end{rem}
	\begin{definition}\label{def:semi_Kan}
		We say that a semi-simplicial space $X$ is a \emph{semi-Kan space} if the map $X\to T$ is a Kan fibration.
	\end{definition}

	We now recall some results about geometric realizations of semi-simplicial objects from \cite{SAG}.
	\begin{lem}[\cite{SAG} Lemma A.5.3.7]\label{lem:trivial_kan->geometric_eq}
		A trivial Kan fibration between semi-Kan spaces induces an equivalence on geometric realizations.
	\end{lem}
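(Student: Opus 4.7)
The plan is to reduce the statement to showing that each homotopy fiber of $f\colon X \to Y$ has contractible realization, and then to combine this with the fact that realization sends Kan fibrations between semi-Kan spaces to Kan fibrations of spaces. The starting observation is that, by the remark preceding the statement, a trivial Kan fibration is in particular a Kan fibration; since semi-Kan spaces are closed under pullback along Kan fibrations (by \cref{rem:closure_of_fibrations}), for each vertex $y\in Y_0$ the fiber $X_y := X\times_Y \Delta^0_s$ is again a semi-Kan space.

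Next, I would verify that $X_y \to \Delta^0_s$ inherits the right lifting property against every boundary inclusion $\partial\Delta^n_s \to \Delta^n_s$. Given a lifting problem with top map $\partial\Delta^n_s \to X_y$, postcomposing with $X_y \to X$ and $\Delta^0_s \xrightarrow{y} Y$ produces a lifting problem along $f$, whose filler $\Delta^n_s \to X$ must project to the constant map at $y$ in $Y$ and hence factor through $X_y$. Thus $X_y$ is a semi-Kan space admitting fillers for every boundary sphere. A standard Kan-complex-style argument (as in \cite{SAG}) then shows that $\|X_y\|$ is contractible: the lifting against $\emptyset \to \Delta^0_s$ makes $X_y$ non-empty, and the combined horn and boundary fillings produce witnesses for the nullhomotopy of any spherical class.

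Finally, I would conclude using the companion result that realization carries a Kan fibration between semi-Kan spaces to a Kan fibration of spaces. Applied to $f$, this gives a fibration $\|X\|\to \|Y\|$ of spaces whose fibers over each point of $\|Y\|$ are the realizations of the $X_y$, which we have just shown to be contractible. Hence $\|f\|$ is an equivalence.

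The main obstacle is the middle step: in the absence of degeneracies, the standard trick of using $s_0$ to produce constant paths and degenerate fillers is unavailable, so every instance of such a filler has to be manufactured from the semi-Kan horn fillers together with the boundary lifting property. This is precisely the coherence that the semi-Kan hypothesis is designed to provide, and the combinatorics is carried out in \cite{SAG}; the cleanest route is to cite that source directly rather than reprove it here.
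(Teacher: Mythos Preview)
The paper gives no proof of its own here: the lemma is stated with a direct citation to \cite{SAG} Lemma~A.5.3.7 and nothing more, so there is no in-paper argument to compare against beyond that reference.

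Your sketch, however, has a genuine gap rooted in the point stressed in \cref{rem:terminal_is_not_unit}: in $ss\cS$ the object $\Delta^{0}_{s}$ is \emph{not} terminal; it is concentrated in degree~$0$. Hence the pullback $X_y \coloneqq X\times_Y \Delta^{0}_{s}$ is also concentrated in degree~$0$, with $(X_y)_n=\emptyset$ for $n\ge 1$ and $(X_y)_0 = X_0\times_{Y_0}\{y\}$. Such an object is essentially never semi-Kan (the horn $\Horn^{1}_{0,s}=\Delta^{0}_{s}\to\Delta^{1}_{s}$ cannot be filled), and your boundary-filling step breaks already at $n=1$: there is no map $\Delta^{1}_{s}\to\Delta^{0}_{s}$, so you cannot manufacture the bottom arrow $\Delta^{n}_{s}\to Y$ by factoring through $y$. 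Moreover $\Vert X_y\Vert \simeq X_0\times_{Y_0}\{y\}$ is typically not contractible (take any trivial Kan fibration of semi-simplicial \emph{sets} with more than one vertex over some $y$). The appeal to \cref{rem:closure_of_fibrations} only yields that $X_y\to\Delta^{0}_{s}$ is a Kan fibration, not that $X_y\to T$ is.

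The repair is to replace $\Delta^{0}_{s}$ by the terminal object $T$: by \cref{lem:unit_is_anodyne} applied to $J=\Delta^{0}_{s}$ (noting $\cF\cL(\Delta^{0}_{s})=T$), any vertex $y\colon\Delta^{0}_{s}\to Y$ extends to $\widetilde{y}\colon T\to Y$, and the fiber $X\times_Y T$ is then genuinely semi-Kan, so that \cref{lem:semi_kan_realization} applies and your contractibility-of-fibers strategy can be carried through. With that correction the outline is reasonable, and your closing suggestion simply to cite \cite{SAG} is exactly what the paper does.
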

	\begin{lem}[\cite{SAG} Lemma A.5.4.3]\label{lem:simplicial_set_replacement}
		For any semi-simplicial space $X$, there exists a semi-simplicial set $\overline{X}$ and a trivial Kan fibration $\overline{X}\to X$.
	\end{lem}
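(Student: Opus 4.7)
The plan is to apply a small object argument to the generating set $I_{KQ} = \{\partial\Delta^n_s \to \Delta^n_s\}$, factoring $\emptyset \to X$ as an $I_{KQ}$-cell-complex followed by an $I_{KQ}$-injective map. Because each generating cofibration has discrete source and target, any $I_{KQ}$-cell-complex starting at $\emptyset$ is levelwise discrete, so the middle object is automatically a semi-simplicial set; the $I_{KQ}$-injective map is, by definition, a trivial Kan fibration.

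Concretely, I would build $\overline{X}$ by induction on dimension. Set $\overline{X}^{(-1)} = \emptyset$ and, inductively, given a semi-simplicial set $\overline{X}^{(n-1)}$ with a map $f^{(n-1)} \colon \overline{X}^{(n-1)} \to X$, form the space of lifting problems
\begin{equation*}
M_n = \Map(\partial\Delta^n_s, \overline{X}^{(n-1)}) \times_{\Map(\partial\Delta^n_s, X)} X_n.
\end{equation*}
Since $\overline{X}^{(n-1)}$ is set-valued, $\Map(\partial\Delta^n_s, \overline{X}^{(n-1)})$ is a set, and $M_n$ decomposes as a disjoint union of fibers of the matching map of $X$ over this set. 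Pick a set $N_n$ with an epimorphism $N_n \twoheadrightarrow M_n$ and form the pushout of $N_n \cdot \Delta^n_s \leftarrow N_n \cdot \partial\Delta^n_s \to \overline{X}^{(n-1)}$ (the right-hand map induced by $N_n \to M_n$) to obtain $\overline{X}^{(n)}$; the $X_n$-component of $N_n \to M_n$ supplies the extension $f^{(n)} \colon \overline{X}^{(n)} \to X$ via the universal property of the pushout. Finally, set $\overline{X} = \colim_n \overline{X}^{(n)}$.

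To verify the right lifting property, consider an arbitrary square with boundary $\alpha\colon \partial\Delta^n_s \to \overline{X}$ and filler candidate $\sigma \in X_n$. Because $\partial\Delta^n_s$ has no simplices of dimension $\geq n$, the map $\alpha$ factors through $\overline{X}^{(n-1)}$, so $(\alpha, \sigma)$ defines a point $\xi \in M_n$; a preimage in $N_n$ whose image lies in the same component of $M_n$ as $\xi$ provides the required dashed lift. The main subtlety is that the pushout defining $\overline{X}^{(n)}$ is computed in $ss\cS$: one needs the observation from the remark after \cref{def:kan_fibration} that, because $N_n \cdot \partial\Delta^n_s \to N_n \cdot \Delta^n_s$ is a levelwise monomorphism of sets, its pushout against a map from a semi-simplicial set remains levelwise discrete. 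Without this check, the construction would only produce a trivial Kan fibration from some semi-simplicial \emph{space}, not a semi-simplicial set.
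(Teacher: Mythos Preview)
Your argument is correct. The paper does not supply its own proof of this lemma; it simply cites \cite{SAG} Lemma~A.5.4.3. Your dimension-by-dimension cell-attachment is the standard argument and essentially what Lurie does. The two points that make a single pass (rather than transfinite iteration) suffice are exactly the ones you identify: (i) attaching cells of dimension $\geq n$ along their boundaries does not alter simplices in dimensions $<n$, so any $\alpha\colon \partial\Delta^n_s \to \overline{X}$ already factors through $\overline{X}^{(n-1)}$, and (ii) pushouts along levelwise injections of sets stay inside $ss\Set \subset ss\cS$, which is the content of the remark following \cref{def:kan_fibration}. With those in hand, the map $\overline{X}_n \to M_n$ is literally the chosen epimorphism $N_n \twoheadrightarrow M_n$, giving the required lifting property.
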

	\begin{lem}\label{lem:semi_kan_realization}
		If in a pullback square of semi-simplicial spaces
		\begin{equation*}
			\begin{tikzcd}
				X \arrow[d] \arrow[r] & Y \arrow[d, "p"] \\
				Z \arrow[r] & W
			\end{tikzcd}
		\end{equation*}
		we have that $Y,Z$ and $W$ are semi-Kan spaces and that $p$ is a Kan fibration, then the square remains a pullback after geometric realization.
	\end{lem}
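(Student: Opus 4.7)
The plan is to reduce the statement to the classical fact that geometric realization of simplicial sets preserves pullbacks along Kan fibrations. This reduction has two stages: first replace the entire diagram by one of semi-simplicial sets using \cref{lem:simplicial_set_replacement} together with pullback stability of (trivial) Kan fibrations; then transfer the resulting pullback from semi-simplicial to simplicial sets via the adjunction $\cL\dashv \cF$ and the cofinality equivalence $\Vert \cF(-)\Vert \simeq \vert -\vert$.

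For the replacement stage, I would first apply \cref{lem:simplicial_set_replacement} to $W$ to obtain a trivial Kan fibration $\overline{W}\to W$ with $\overline{W}$ a semi-simplicial set. Form the pullbacks $Y'=Y\timest_{W}\overline{W}$ and $Z'=Z\timest_{W}\overline{W}$; by \cref{rem:closure_of_fibrations} the projections $Y'\to Y$ and $Z'\to Z$ are trivial Kan fibrations, $Y'\to \overline{W}$ is a Kan fibration, and $Y',Z'$ are semi-Kan. Apply \cref{lem:simplicial_set_replacement} again to produce semi-simplicial sets $\overline{Y},\overline{Z}$ equipped with trivial Kan fibrations to $Y',Z'$. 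Set $\overline{X}=\overline{Y}\timest_{\overline{W}}\overline{Z}$, which is itself a semi-simplicial set since pullbacks in $ss\cS$ are computed pointwise and pullbacks of discrete spaces in $\cS$ are discrete. Using the base-change identity
\begin{equation*}
	(Y\timest_{W}\overline{W})\timest_{\overline{W}}(Z\timest_{W}\overline{W})\simeq X\timest_{W}\overline{W},
\end{equation*}
combined with closure of trivial Kan fibrations under composition and pullback, one checks that the canonical comparison $\overline{X}\to X$ is again a trivial Kan fibration. All four replacements are semi-Kan, and $\overline{Y}\to \overline{W}$ remains a Kan fibration by composition.

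Since $\cL$ is left Kan extension along the fully faithful inclusion $\Simp_{s}\subset \Simp$, the unit $K\to \cF\cL K$ is an equivalence for any semi-simplicial set $K$. Hence $\overline{X}\simeq \cF(\cL\overline{Y}\timest_{\cL\overline{W}}\cL\overline{Z})$, and the cofinality equivalence yields
\begin{equation*}
	\Vert \overline{X}\Vert \simeq \vert \cL\overline{Y}\timest_{\cL\overline{W}}\cL\overline{Z}\vert.
\end{equation*}
Because $\cL\Horn_{s,i}^{n}=\Horn_{i}^{n}$ and $\cL\Delta_{s}^{n}=\Delta^{n}$, the adjunction $\cL\dashv \cF$ identifies the Kan fibration $\overline{Y}\to \overline{W}$ with a Kan fibration $\cL\overline{Y}\to \cL\overline{W}$ of simplicial sets. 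The classical Quillen theorem for simplicial sets then gives $\vert \cL\overline{Y}\timest_{\cL\overline{W}}\cL\overline{Z}\vert \simeq \vert \cL\overline{Y}\vert \timest_{\vert \cL\overline{W}\vert}\vert \cL\overline{Z}\vert$, and unwinding the cofinality equivalence on each factor produces $\Vert \overline{X}\Vert \simeq \Vert \overline{Y}\Vert \timest_{\Vert \overline{W}\Vert}\Vert \overline{Z}\Vert$. Applying \cref{lem:trivial_kan->geometric_eq} to the trivial Kan fibrations $\overline{X}\to X$, $\overline{Y}\to Y$, $\overline{Z}\to Z$, $\overline{W}\to W$ then upgrades this to the desired equivalence $\Vert X\Vert \simeq \Vert Y\Vert \timest_{\Vert W\Vert}\Vert Z\Vert$.

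The main obstacle is the bookkeeping in the replacement stage: verifying that the comparison $\overline{X}\to X$ is indeed a trivial Kan fibration hinges on the base-change identity above and on the closure properties recorded in \cref{rem:closure_of_fibrations}. A secondary subtlety is that $\cL$ does not preserve pullbacks in general, so the transfer to simplicial sets must be routed through $\cF$ as above rather than attempting to push the pullback through $\cL$ directly.
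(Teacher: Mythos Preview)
Your reduction strategy has a genuine error at the transfer step. You assert that ``since $\cL$ is left Kan extension along the fully faithful inclusion $\Simp_{s}\subset \Simp$, the unit $K\to \cF\cL K$ is an equivalence for any semi-simplicial set $K$.'' But $\Simp_{s}\subset \Simp$ is \emph{not} fully faithful: it is the inclusion of a wide subcategory that omits the degeneracies, so it is faithful but not full. Consequently the unit is not an equivalence. The paper itself records the simplest counterexample in \cref{rem:terminal_is_not_unit}: for $K=\Delta^{0}_{s}$ one has $\cL(\Delta^{0}_{s})=\Delta^{0}$ and $\cF(\Delta^{0})=T$, and $\Delta^{0}_{s}\to T$ is not an equivalence. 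This breaks your identification $\overline{X}\simeq \cF(\cL\overline{Y}\times_{\cL\overline{W}}\cL\overline{Z})$, since $\cF$ applied to that pullback computes $\cF\cL\overline{Y}\times_{\cF\cL\overline{W}}\cF\cL\overline{Z}$, not $\overline{Y}\times_{\overline{W}}\overline{Z}$.

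A second problem is your claim that the adjunction ``identifies the Kan fibration $\overline{Y}\to \overline{W}$ with a Kan fibration $\cL\overline{Y}\to \cL\overline{W}$.'' Knowing $\cL\Horn^{n}_{s,i}=\Horn^{n}_{i}$ tells you that $\cL$ sends generating anodynes to generating anodynes; it says nothing about $\cL$ preserving \emph{right} lifting properties. Left adjoints need not preserve fibrations. A correct passage to simplicial sets would instead invoke \cite{steimle2018degeneracies} (or the older \cite{rourke1971delta}) to \emph{choose} compatible simplicial structures on $\overline{W}$, then on the Kan fibration $\overline{Y}\to\overline{W}$, and separately on $\overline{Z}\to\overline{W}$; this is exactly the manoeuvre the paper uses in the proof of \cref{lem:inverse_lim_realization}, but it is not what you wrote.

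For comparison, the paper does not attempt any of this: its proof of \cref{lem:semi_kan_realization} is a one-line citation of \cite{SAG} Lemma~A.5.4.1, applied to the hypercomplete $\infty$-topos $\cS$.
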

	\begin{proof}
		Apply \cite{SAG} Lemma A.5.4.1 to the hypercomplepte $\infty$-topos $\cS$.
	\end{proof}
	We say that a category $\cI$ is \emph{inverse} if there exists a functor $\cI^{\op} \to \cA$ where $\cA$ is a totally ordered ordinal, such that every non-identity morphism in $\cI$ is mapped to a non-identity morphism in $\cA$.
	\begin{definition}
		Let $\cI$ be an inverse category, and $\cC$ an $\infty$-category which admits limits, and $F\colon \cI \to \cC$ a diagram. For an $i\in \cI$, the \emph{matching morphism at } $i$ is the induced map
		\begin{align*}
			F(i) \to  M_{F}(i) \coloneq \lim\left ((\cI_{i/})^{\circ} \to \cI \xrightarrow{F} \cC \right),
		\end{align*} 
		where $(\cI_{i/})^{\circ} \subset \cI_{i/}$ is the full subcategory spanned by the non-initial objects.
	\end{definition}
	\begin{definition}
		Let $\cI$ be an inverse category, $\cC$ an $\infty$-category which admits limits, and $S$ a class of morphisms in $\cC$. We say that a natural transformation $\eta \colon F\to G$ between functors from $\cI$ to $\cC$ is an $S$-\emph{Reedy fibration} if for every $i\in \cI$, the map
		\begin{align*}
			F(i) \to M_{F}(i) \times_{M_{G}(i)} G(i)
		\end{align*}
		is an $S$-fibration. We say that a diagram $F\colon \cI \to \cC$ is $S$-\emph{Reedy fibrant} if the unique natural transformation to the constant functor $\underline{T}$ at the terminal object is an $S$-Reedy fibration.
	\end{definition}
	\begin{lem}\label{lem:inverse_limit_preserves_fib}
		Let $\cI$ be an inverse category, $\cC$ an $\infty$-category which admits limits, and $S$ a class of morphisms in $\cC$. If $F\to G$ is an $S$-Reedy fibration between $\cI$ diagrams in $\cC$, then the induced map $\lim(F) \to \lim(G)$ is an $S$-fibration in $\cC$. 
	\end{lem}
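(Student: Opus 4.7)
The plan is to proceed by transfinite induction on the ordinal $\cA$ witnessing that $\cI$ is inverse. Fix such a witness $\phi\colon \cI^{\op} \to \cA$, and for each $\alpha \in \cA$ let $\cI_{<\alpha} \subset \cI$ be the full subcategory on objects $i$ with $\phi(i) < \alpha$. Since non-identity morphisms of $\cI$ go to non-identity morphisms of $\cA$, any morphism $i\to j$ whose source and target both lie at level $\alpha$ must be the identity; moreover, any non-identity morphism with source at level $\alpha$ has target in $\cI_{<\alpha}$. Consequently, writing $I_{\alpha}$ for the discrete set $\phi^{-1}(\alpha)$, the matching object $M_F(i)$ for $i\in I_\alpha$ depends only on $F|_{\cI_{<\alpha}}$. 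I will prove by induction on $\alpha$ that $\lim_{\cI_{<\alpha}} F \to \lim_{\cI_{<\alpha}} G$ is an $S$-fibration; passing to $\alpha = \sup(\phi)+1$ (or to the sup, treated as a limit stage) gives the claim.

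For the successor step, unwinding the universal property of the limit gives a pullback identification
\begin{align*}
\lim_{\cI_{\leq\alpha}} F \;\simeq\; \lim_{\cI_{<\alpha}} F \underset{\prod_{i \in I_\alpha} M_F(i)}{\timest} \prod_{i\in I_\alpha} F(i),
\end{align*}
and similarly for $G$. Setting $A \coloneq \lim_{\cI_{<\alpha}} F \timest_{\lim_{\cI_{<\alpha}} G} \lim_{\cI_{\leq\alpha}} G$, I factor the map to be analyzed as $\lim_{\cI_{\leq\alpha}} F \to A \to \lim_{\cI_{\leq\alpha}} G$. The second map is a base change of $\lim_{\cI_{<\alpha}} F \to \lim_{\cI_{<\alpha}} G$, an $S$-fibration by the inductive hypothesis. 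A routine pullback manipulation, using the two displayed identifications, shows that the first map is a base change of
\begin{align*}
\prod_{i\in I_\alpha} F(i) \;\longrightarrow\; \prod_{i\in I_\alpha} \bigl(M_F(i) \timest_{M_G(i)} G(i)\bigr),
\end{align*}
which is a product of the matching $S$-fibrations supplied by the Reedy hypothesis. Since $S$-fibrations are closed under pullback and (transfinite) composition by \Cref{rem:closure_of_fibrations}, products of $S$-fibrations are $S$-fibrations, and the successor step follows.

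For a limit ordinal $\alpha$, we have $\cI_{<\alpha} = \colim_{\beta<\alpha} \cI_{<\beta}$, hence $\lim_{\cI_{<\alpha}} F = \lim_{\beta<\alpha} \lim_{\cI_{<\beta}} F$, and similarly for $G$. The map in question is then the inverse limit of a tower of $S$-fibrations provided by the inductive hypothesis, and is an $S$-fibration by closure under transfinite composition.

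The main obstacle is the successor step: verifying carefully that the map $\lim_{\cI_{\leq\alpha}} F \to A$ is genuinely a pullback of the product of matching maps. This is a purely formal pullback rearrangement, but needs the key observation that $M_F(i)$ and $M_G(i)$ for $i \in I_\alpha$ depend only on the diagrams restricted to $\cI_{<\alpha}$, which is precisely what the inverseness of $\cI$ guarantees.
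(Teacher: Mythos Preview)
Your approach is correct and genuinely different from the paper's. The paper converts a lifting problem $J \to \lim F$, $K \to \lim G$ by adjunction into a lifting problem against $\underline{J} \to \underline{K}$ in $\Fun(\cI,\cC)$, then constructs the lift objectwise by transfinite induction over $\cI$, invoking the Reedy hypothesis to extend over each new object. You instead exhibit $\lim F \to \lim G$ itself as a transfinite composition of pullbacks of products of the Reedy matching fibrations, via the decomposition $\lim_{\cI_{\leq\alpha}} F \simeq \lim_{\cI_{<\alpha}} F \times_{\prod_i M_F(i)} \prod_i F(i)$. Your route is more arrow-theoretic and lets the closure properties of fibrations do all the work; the paper's route is more hands-on but needs less pullback bookkeeping.

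One caveat: your limit-ordinal step is imprecise. Knowing that each $\lim_{\cI_{<\beta}} F \to \lim_{\cI_{<\beta}} G$ is an $S$-fibration does not by itself exhibit the induced map on inverse limits as a transfinite composition of $S$-fibrations, and $S$-fibrations need not be closed under limits in $\Ar(\cC)$ (\cref{rem:closure_of_fibrations} grants that only for orthogonality). The fix is to use the \emph{relative} statement you already established in the successor step, namely that $\lim_{\cI_{\leq\gamma}} F \to A$ is an $S$-fibration: then the single tower $P_\gamma \coloneq (\lim_{\cI_{<\gamma}} F) \times_{(\lim_{\cI_{<\gamma}} G)} (\lim_\cI G)$ has each successor transition $P_{\gamma+1} \to P_\gamma$ a pullback of an $S$-fibration and each limit stage continuous, so $\lim_\cI F = P_{\text{top}} \to P_0 = \lim_\cI G$ is a genuine transfinite composition of $S$-fibrations. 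Building this tower once renders the outer induction unnecessary.
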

	\begin{proof}
		Let $J\to K$ be any map in $S$. By adjunction, the lifting problem
		\begin{equation*}
			\begin{tikzcd}
				J \arrow[r] \arrow[d]      &   \lim(F)  \arrow[d] \\
				K  \arrow[r] \arrow[ur, dashed]              &   \lim(K)
			\end{tikzcd}
		\end{equation*}
		in $\cC$ is equivalent to the lifting problem
		\begin{equation}\label{eq:inverse_limit_preserves_fib_1}
			\begin{tikzcd}
				\underline{J} \arrow[r] \arrow[d]      &   F  \arrow[d] \\
				\underline{K}  \arrow[r]               &   G
			\end{tikzcd}
		\end{equation}
		in $\Fun(\cI,\cC)$. Pick a map $\cI^{\op} \to \cA$ for a totally ordered ordinal $\cA$. This gives a total order on the objects of $\cI$ which we can induct over. For an object $i\in \cI$, write $\cI_{<i}\subset \cI$ for the full subcategory spanned by objects that are strictly smaller than $i$ in this order. Assume for induction that we have constructed the lift in \cref{eq:inverse_limit_preserves_fib_1} after restricting to the category $\cI_{<i}$. Note that because non-identity morphisms in $\cI$ increase order, 
		the functor $(\cI_{i/})^{\circ} \to \cI$ used to define the matching morphism factors through $\cI_{<i}$. The lift over $\cI_{<i}$ therefore produces, by adjunction, a lift
		\begin{equation*}
			\begin{tikzcd}
				J \arrow[r] \arrow[d]      &   M_{F}(i)  \arrow[d] \\
				K  \arrow[r] \arrow[ur, dashed]              &   M_{G}(i)\, .
			\end{tikzcd}
		\end{equation*}
		To extend the lift over $i$, it suffices by adjunction to solve the lifting problem
		\begin{equation*}
			\begin{tikzcd}
				J \arrow[r] \arrow[d]      &   F(i)  \arrow[d] \\
				K  \arrow[r]  \arrow[ur, dashed]             &   M_{F}(i)\times_{M_{G}(i)} G(i)
			\end{tikzcd}
		\end{equation*}
		in $\cC$, which we can do because $F\to G$ is an $S$-Reedy fibration. By induction, we can therefore extend the lift over all $\cI$.
	\end{proof}
	
	\begin{lem}\label{lem:inverse_lim_realization}
		Let $\cI$ be an inverse category, and $F\colon \cI \to ss\cS$ a $J_{KQ}$-Reedy fibrant diagram. Then the natural map
		\begin{align*}
			\Vert \lim(F) \Vert \to \lim \Vert F \Vert
		\end{align*}
		is an equivalence.
	\end{lem}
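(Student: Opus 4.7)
The plan is a transfinite induction on $\cI$, reducing each successor step to the pullback case of \cref{lem:semi_kan_realization}.

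I fix a functor $h\colon \cI^{\op}\to \cA$ realising $\cI$ as inverse and well-order the objects of $\cI$ by $h$-height, breaking ties arbitrarily. Write $\cI_\beta\subset\cI$ for the full subcategory on the first $\beta$ objects. Note that if $i\in\cI_\beta$ then the functor $(\cI_{i/})^\circ\to\cI$ defining $M_F(i)$ factors through $\cI_\beta$, since every non-identity morphism out of $i$ strictly lowers $h$-height; in particular $F\vert_{\cI_\beta}$ remains Reedy fibrant over the inverse category $\cI_\beta$. I will prove by induction on $\beta$ that $\lim F\vert_{\cI_\beta}$ is semi-Kan and that $\Vert\lim F\vert_{\cI_\beta}\Vert\simeq \lim\Vert F\vert_{\cI_\beta}\Vert$, with $\beta=\vert\cI\vert$ giving the lemma.

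For the successor step, let $\cI_{\beta+1}=\cI_\beta\cup\{i\}$. Unravelling the definition of a cone presents the limit as a pullback
\begin{equation*}
    \lim F\vert_{\cI_{\beta+1}}\simeq F(i)\times_{M_F(i)}\lim F\vert_{\cI_\beta}\, .
\end{equation*}
Reedy fibrancy of $F$ makes $F(i)\to M_F(i)$ a Kan fibration and $F(i)$ semi-Kan, while the inductive hypothesis (applied both to $\cI_\beta$ and to the inverse subcategory $(\cI_{i/})^\circ$) makes $\lim F\vert_{\cI_\beta}$ and $M_F(i)$ semi-Kan. \cref{lem:semi_kan_realization} therefore preserves this pullback under $\Vert\cdot\Vert$, and combining with the two inductive identifications of $\Vert\cdot\Vert$ with $\lim\Vert\cdot\Vert$ identifies $\Vert\lim F\vert_{\cI_{\beta+1}}\Vert$ with the corresponding pullback in $\cS$, which is $\lim\Vert F\vert_{\cI_{\beta+1}}\Vert$. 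The inductive semi-Kan conclusion follows either from \cref{lem:inverse_limit_preserves_fib} applied to $F\vert_{\cI_{\beta+1}}\to\underline{T}$, or from closure of Kan fibrations under pullback.

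The main obstacle is the limit-ordinal step: at a limit $\lambda$, cofinality gives $\lim F\vert_{\cI_\lambda}\simeq\lim_{\beta<\lambda}\lim F\vert_{\cI_\beta}$, a transfinite tower of Kan fibrations between semi-Kan spaces, and \cref{lem:semi_kan_realization} as stated only handles pullbacks. The cleanest resolution is to invoke the underlying Lurie result \cite{SAG} Lemma A.5.4.1 in its full generality, which applies to arbitrary inverse limits of suitably fibrant semi-simplicial objects in the hypercomplete $\infty$-topos $\cS$ and therefore subsumes the limit-ordinal case (indeed it subsumes the whole induction if cited from the outset); alternatively, one bootstraps by presenting the tower limit as an equalizer of a countable product with itself and checking that the shift map remains a Kan fibration between semi-Kan spaces, so that the pullback statement applies again.
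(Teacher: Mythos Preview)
Your transfinite-induction approach is genuinely different from the paper's, and the limit-ordinal gap you flag is real and not resolved by either of your suggestions. The paper avoids the issue entirely by reducing to simplicial \emph{sets}: it first constructs (by Reedy induction using \cref{lem:simplicial_set_replacement}) an $I_{KQ}$-Reedy fibration $\overline{F}\to F$ with $\overline{F}$ valued in semi-simplicial sets, then lifts $\overline{F}$ to a diagram $F'\colon\cI\to s\Set$ of \emph{simplicial} sets using Steimle's degeneracy theorem \cite{steimle2018degeneracies}, and finally appeals to classical model-category theory: for inverse $\cI$ the Reedy and injective model structures on $\Fun(\cI,s\Set)$ coincide, so the strict limit of $F'$ computes the homotopy limit and is preserved by $\vert-\vert$. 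The conclusion is then transferred back to $F$ via \cref{lem:inverse_limit_preserves_fib} and \cref{lem:trivial_kan->geometric_eq}.

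On your proposed fixes: the equalizer presentation of a tower does not help. You would first need $\Vert\prod_\alpha X_\alpha\Vert\simeq\prod_\alpha\Vert X_\alpha\Vert$, which is already an infinite inverse limit; and in any case the pullback square presenting an equalizer has legs the diagonal $\Delta\colon B\to B\times B$ and the graph map $(\mathrm{shift},\mathrm{id})$, neither of which is a Kan fibration, so \cref{lem:semi_kan_realization} does not apply. As for citing \cite{SAG} A.5.4.1 ``in full generality'': the paper invokes it only for pullbacks, and the elaborate detour through Steimle and $s\Set$ would be unnecessary if that lemma already covered arbitrary inverse limits. There is also a smaller wrinkle in your successor step: identifying $\Vert M_F(i)\Vert$ with $M_{\Vert F\Vert}(i)$ is itself the lemma for the inverse category $(\cI_{i/})^\circ$, which is not one of your $\cI_\gamma$; your induction should run over the ordinal height in $\cA$ (quantifying over all inverse categories of bounded height, and checking Reedy fibrancy is inherited by the pulled-back diagram) rather than over a filtration of a fixed $\cI$.
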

	\begin{proof}
		Using the notation $\cI_{<i}$ from the proof of \cref{lem:inverse_limit_preserves_fib}, note that the category $\cI_{<i}$ is itself an inverse category. Assume for induction that we have constructed an $I_{KQ}$-Reedy fibration
		\begin{align*}
			\overline{F}_{<i} \to F_{<i}
		\end{align*}
		whose target is the restriction of $F$ to $\cI_{<i}$, and whose source is valued in semi-simplicial sets. To extend this natural transformation over $i$, we use \cref{lem:simplicial_set_replacement} to pick a semi-simplicial set $\overline{F}(i)$ with a trivial Kan fibration
		\begin{align*}
			\overline{F}(i) \to F(i) \times_{M_{F}(i)} M_{\overline{F}(i)} \, .
		\end{align*}
		By induction, this produces an $I_{KQ}$-reedy fibration $\overline{F} \to F$, where $\overline{F}$ is valued in semi-simplicial sets. Each of the natural transformations $\overline{F} \to F \to \underline{T}$ are $J_{KQ}$-Reedy fibrations, so $\overline{F}$ is $J_{KQ}$-Reedy fibrant. Now assume for induction that we have constructed a functor 
		\begin{align*}
			F'_{<i} \colon \cI_{<i} \to s\Set
		\end{align*}
		and a natural equivalence $\cF \circ F'_{<i} \simeq \overline{F}_{<i}$. Then by Reedy fibrancy, the map
		\begin{align*}
			\overline{F}(i) \to M_{\overline{F}}(i) \simeq \cF(M_{F'}(i))
		\end{align*}
		is a Kan fibration, so by \cite{steimle2018degeneracies}, we may pick a simplicial structure on $\overline{F}(i)$ compatible with this map. By induction, we therefore get a diagram $F' \colon \cI \to s\Set$, and an equivalence $\cF\circ F' \simeq \overline{F}$. Because $\cI$ is an inverse category, the Reedy and injective model structures on $\Fun(\cI, s\Set)$ agree by \cite{bergner2013reedy}. In particular, this means that the strict limit of $F'$ is also a homotopy limit in the Kan--Quillen model structure, so in particular this limit is preserved by the localization $s\Set \to \pi_{0}\cS$ to the homotopy category of spaces. This localization factors through the $\infty$-category $\cS$ by the realization functor $\vert - \vert$, and since the homotopy category detects equivalences, the comparison map
		\begin{align*}
			\vert \lim F' \vert \xrightarrow{\simeq}  \lim \vert F' \vert 
		\end{align*}
		is an equivalence in $\cS$. Since the forgetful functor $\cF$ preserves limits and the homotopy type of realization, we similarly get
		\begin{align*}
			\Vert \lim \overline{F} \Vert \xrightarrow{\simeq} \lim \Vert \overline{F} \Vert \, .
		\end{align*}
		Now we apply \cref{lem:inverse_limit_preserves_fib} to the $J_{KQ}$-Reedy fibration $\overline{F} \to F$, and use \cref{lem:trivial_kan->geometric_eq} to conclude that the vertical maps in the following square are equivalences.
		\begin{equation*}
			\begin{tikzcd}
				\Vert \lim \overline{F} \Vert \arrow[r, "\simeq"] \arrow[d, "\simeq"]      &   \lim \Vert \overline{F} \Vert  \arrow[d, "\simeq"] \\
				\Vert \lim F \Vert \arrow[r]               &   \lim \Vert F \Vert
			\end{tikzcd}
		\end{equation*}
		By 2-out-of-3 for equivalences, the final map in this square must also be an equivalence.
	\end{proof}
	\begin{lem}\label{lem:unit_is_anodyne}
		If $X$ is a semi-Kan space and $J$ a semi-simplicial set, any $J\to X$ extends along the adjunction unit $u\colon J\to \cF\cL(J)$. 
	\end{lem}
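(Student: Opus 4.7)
The plan is to reduce to the case where $X$ is a semi-simplicial set via \cref{lem:simplicial_set_replacement}, and then construct the extension using the adjunction $\cL \dashv \cF$ together with a classical lifting result.

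By \cref{lem:simplicial_set_replacement}, pick a trivial Kan fibration $p\colon \bar X \to X$ with $\bar X$ a semi-simplicial set. Since $J$ is cofibrant (as $\emptyset \to J$ is an $I_{KQ}$-cofibration) and $p$ is a trivial Kan fibration, the map $f$ lifts through $p$ to some $f'\colon J \to \bar X$. The composite $\bar X \to X \to T$ is a composition of Kan fibrations, so $\bar X$ is itself a Kan semi-simplicial set. It then suffices to extend $f'$ to $\bar f\colon \cF\cL(J) \to \bar X$; composing with $p$ will yield the required extension of $f$.

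Now invoke the classical fact that every Kan semi-simplicial set underlies a Kan simplicial set, producing a Kan simplicial set $Y$ with $\bar X = \cF(Y)$. Under the adjunction $\cL \dashv \cF$, the map $f'\colon J \to \cF(Y)$ transposes to $\tilde f\colon \cL(J) \to Y$. The desired $\bar f\colon \cF\cL(J) \to \cF(Y) = \bar X$ is then defined as the transpose of the composition
\[
\cL\cF\cL(J) \xrightarrow{\epsilon_{\cL(J)}} \cL(J) \xrightarrow{\tilde f} Y,
\]
where $\epsilon$ denotes the counit of $\cL \dashv \cF$. By the triangle identity $\epsilon_{\cL(J)} \circ \cL(u) = \mathrm{id}_{\cL(J)}$, the composite $\bar f \circ u$ transposes to $\tilde f \circ \epsilon_{\cL(J)} \circ \cL(u) = \tilde f$, so $\bar f \circ u = f'$ as required.

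The main obstacle is the classical lifting of Kan semi-simplicial sets to Kan simplicial sets, which is a non-trivial input; once available, the extension follows formally from the triangle identity. The introduction explains that no analogous lifting result is known for semi-simplicial spaces in general, which is precisely why the preliminary reduction via \cref{lem:simplicial_set_replacement} is essential here.
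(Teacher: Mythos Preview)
Your proof is correct and follows essentially the same approach as the paper: reduce to a levelwise discrete semi-Kan space via \cref{lem:simplicial_set_replacement}, invoke the classical result (cited as \cite{rourke1971delta} in the paper) that such an object underlies a simplicial set, and then use the triangle identity for $\cL \dashv \cF$ to produce the extension. Your write-up is in fact slightly more explicit than the paper's, spelling out both why the reduction step works (cofibrancy of $J$) and how the transpose is formed.
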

	\begin{proof}
		By picking a trivial fibration $\overline{X} \to X$, it suffices to prove this for $X$ a Kan semi-simplicial set. By \cite{rourke1971delta}, $X$ lifts to a simplicial set $X'$. By adjunction, the lifting problem
		\begin{equation*}
			\begin{tikzcd}
				J \arrow[r] \arrow[d, "u"']      &   \cF(X') \\
				\cL\cF(J)  \arrow[ur,dashed]                
			\end{tikzcd}
		\end{equation*}
		is equivalent to 
		\begin{equation}\label{eq:unit_is_anodyne}
			\begin{tikzcd}
				\cL(J) \arrow[r] \arrow[d, "\cL(u)"']      &   X'.  \\
				\cL\cF\cL(J)  \arrow[ur,dashed]               & 
			\end{tikzcd}
		\end{equation}
		By the zig-zag identities, the counit at $\cL(J)$ gives a retraction of the vertical map, so the lifting problem \cref{eq:unit_is_anodyne} can always be solved.
	\end{proof}

	\section{Marked inner Kan spaces}\label{sec:marked_inner_kan_spaces}
	In this section, we set up our ``weak model structure'' on the category of marked semi-simplicial spaces and show that it presents $\Cat_{\infty}$. The definition relies on classes of cofibrations and anodynes, which we now define.
	\begin{definition}\label{def:generating_marked_cofibrations}
		The collection of \emph{generating marked cofibrations} is the collection $I_{JL}$ of maps in $ss\cS_{+}$ consisting of:
		\begin{enumerate}
			\item $\del \Delta_{s}^{n,\flat} \to \Delta_{s}^{n,\flat}, \, n \geq 0$.
			\item $\Delta^{1,\flat}_{s} \to \Delta^{1,\sharp}_{s}$.   
		\end{enumerate}
		We call $I_{JL}$-fibrations \emph{marked trivial fibrations}, and $I_{JL}$-cofibrations \emph{marked cofibrations}.
	\end{definition}
	\begin{lem}\label{lem:trivial_fib_cancellation}
		If $f$ and $g$ are composable morphisms in $ss\cS_{+}$ such that $f$ and $g\circ f$ are trivial fibrations, then so is $g$.
	\end{lem}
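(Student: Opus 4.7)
The strategy is to verify that $g$ has the right lifting property against every generator $i \colon A \to B$ in $I_{JL}$, which by \cref{rem:closure_of_fibrations} and \cref{rem:cof_weakly_saturated} suffices to conclude that $g$ is a trivial fibration. The key observation enabling the argument is that for each generator, the source $A$ is itself $I_{JL}$-cofibrant: $\del \Delta^{n,\flat}_s$ is built from $\emptyset$ by iteratively attaching cells along boundary inclusions $\del \Delta^{k,\flat}_s \to \Delta^{k,\flat}_s$ for $k < n$, and $\Delta^{1,\flat}_s$ is built from $\emptyset$ by such attachments with $k \leq 1$. In particular, $\emptyset \to A$ is a marked cofibration in every case.

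Given a commuting square
\begin{equation*}
\begin{tikzcd}
A \arrow[r, "a"] \arrow[d, "i"'] & Y \arrow[d, "g"] \\
B \arrow[r, "b"'] & Z
\end{tikzcd}
\end{equation*}
with $i \in I_{JL}$, the plan is a two-step lift. First, since $f$ is a trivial fibration and $\emptyset \to A$ is a marked cofibration, I would lift $a$ through $f$ to obtain $a' \colon A \to X$ with $f a' \simeq a$. This produces a homotopy-commuting square for $gf$ with top edge $a'$ and bottom edge $b$, because $gf a' \simeq g a \simeq b i$. Using that $gf$ is a trivial fibration and $i$ is a marked cofibration, I would lift this square to $\widetilde{b} \colon B \to X$ with $gf \widetilde{b} \simeq b$ and $\widetilde{b} i \simeq a'$. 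Finally, setting $\ell \coloneq f \widetilde{b} \colon B \to Y$ gives $g \ell \simeq b$ and $\ell i \simeq f a' \simeq a$, producing the desired lift.

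The main point requiring care is the $\infty$-categorical coherence, since the RLP produces lifts only up to homotopy and one must check that the various homotopies splice together to yield an actual element of the mapping-space pullback from \cref{eq:lifting_pullback}. I would handle this by phrasing each lifting step as a $\pi_0$-surjectivity statement on the appropriate fiber of
\begin{equation*}
\Map(B,-) \to \Map(A,-) \times_{\Map(A,\cdot)} \Map(B,\cdot),
\end{equation*}
and chasing the surjectivities through the induced map of pullback squares rather than manipulating individual maps. This packaging avoids the bookkeeping of specific homotopies and makes the two-stage lifting amount to a composition of surjections.
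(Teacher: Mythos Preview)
Your proposal is correct and follows essentially the same two-step argument as the paper: lift the top map through $f$ using cofibrancy of the source, then lift the resulting square through $g\circ f$, and push forward along $f$. The paper phrases it for an arbitrary marked cofibration $J\to K$ between semi-simplicial sets (so that $\emptyset\to J$ is a marked cofibration) rather than just for generators, but the content is identical; your added remarks on $\infty$-categorical coherence are a welcome elaboration of what the paper leaves implicit.
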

	\begin{proof}
		Let $J\to K$ be a marked cofibration between semi-simplicial sets, and $X\xrightarrow{f} Y \xrightarrow{g} X$ composable morphisms such that $f$ and $g\circ f$ are trivial fibrations. Then consider the following diagram, where the horizontal morphisms are arbitrary.
		\begin{equation*}
			\begin{tikzcd}
				\emptyset \arrow[r] \arrow[d]      &   X  \arrow[d, "f"] \\
				J   \arrow[d] \arrow[ur,dashed]            &   Y \arrow[d, "g"] \\
				K \arrow[r]   \arrow[uur,dotted]     & Z
				\arrow[from=2-1, to=2-2,crossing over]
			\end{tikzcd}
		\end{equation*}
		The dashed lift exists because $f$ is a trivial fibration, and $\emptyset \to J$ is a marked cofibration. The dotted lift exists because $J\to K$ is a marked cofibration and $g\circ f$ is a trivial fibration. Pushing the dotted lift forward along $f$ solves the desired lifting problem, showing that $g$ is a trivial fibration.
	\end{proof}
	
	\begin{rem}\label{rem:levelwise_injections}
		Because the maps in $I_{JL}$ are levelwise injective, every pushout, retract or composite of maps in $I_{JL}$ is levelwise injective. Hence, by \cref{rem:cof_weakly_saturated}, every marked cofibration is levelwise injective. Conversely, by using the pushout decomposition \cref{eq:marking_pushout} and by attaching cells, any levelwise injective map between marked semi-simplicial sets is a marked cofibration. As for $I_{KQ}$, the cofibrant objects are precisely the marked semi-simplicial sets.
	\end{rem}
	
	\begin{definition}\label{def:admissible_horn}
		A marked horn inclusion $(\Horn^{n}_{i,s}, A) \to (\Delta^{n}_{s}, B)$ is called \textit{admissible} if $A=B\cap (\Horn^{n}_{i,s})_1 $, and either
		\begin{enumerate}
			\item $A=\emptyset$ and $0< i < n$,
			\item $B$ consists of the edge $\left\{ 0,1\right\}$, and $i=0$,
			\item or $B$ consists of the edge $\left\{ n-1,n\right\}$, and $i=n$.
		\end{enumerate}
	\end{definition}
	\begin{definition}\label{def:S_CSS+}
		Let $J_{JL}$ denote the collection of morphisms of marked semi-simplicial sets that contains:
		\begin{enumerate}
			\item Admissible horn inclusions.
			\item $S_{2/6} \colon (\Delta^{3}_s, \left\{ \left\{ 0,2\right\}, \left\{ 1,3\right\}  \right\}) \to \Delta^{3,\sharp}_{s}$.
		\end{enumerate}
		We call $J_{JL}$-fibrations \emph{marked inner fibrations} and $J_{JL}$-cofibrations \emph{marked inner anodyne} morphisms.
	\end{definition}
	
	\begin{rem}\label{rem:2_out_of_6}
		We do not need to include the maps $S^{i}_{2/3}\colon (\Delta^{2},A)_{s}\to \Delta^{2,\sharp}_{s}, i=0,1,2$ that appear in \cite{harpaz2015quasi}, because the 2-out-of-6 property will turn out to be strictly stronger than the 2-out-of-3 property. In particular, the 2-out-of-6 property will guarantee that edges are marked if and only if they are equivalences (see \cref{prop:IK_+=IK_qu}).  
	\end{rem}

	\begin{definition}[\cite{harpaz2015quasi}]
		We say that a marked semi-simplicial space $W$ is a \textit{complete semi-Segal space} if $W\to T^{\sharp}$ is orthogonal to $J_{JL}$. We denote the full subcategory of $ss\cS_{+}$ spanned by the complete semi-Segal spaces by $\CsSS$.
	\end{definition}
	Let $W$ be a complete semi-Segal space. Orthogonality to the unmarked inner horn inclusions ensures that $W$ satisfies the Segal condition
	\begin{align*}
		W_{n} \simeq W_{1}\timest_{W_{0}} W_{1} \timest_{W_0} \dots \timest_{W_{0}} W_{1} \, .
	\end{align*}
	As shown in \cite{harpaz2015quasi}, orthogonality with respect to marked horns implies that all the marked edges in $W$ are invertible with respect to the semi-category structure on $W$ (here we mean invertible in the sene of \cite{harpaz2015quasi} 1.4.1, not to be confused with equivalences as in \cref{def:equivalence_in_inner_Kan}), while orthogonality with respect to $S_{2/6}$ implies that a all invertible edges are marked. By \cite{harpaz2015quasi}, all the inclusions $\Delta^{0}_{s} \to \Delta^{n,\sharp}_{s}$ are marked inner anodyne, and orthogonality with respect to theese implies that $W(\Delta^{\bullet,\sharp}_{s})$ is a constant semi-simplicial space at $W_{0}$. Because marked edges are precisely the invertible edges, this is analogous to the completeness condition for Segal spaces.

	Because the subcategory $\CsSS$ is defined as the local objects with respect to a class of morphisms, the inclusion $\CsSS \to ss\cS_{+}$ admits a left adjoint which we denote $\cL_{\CsSS}$.
	\begin{definition}\label{def:CSS+_equivalence}
		We say that a map of marked semi-simplicial spaces is a $\CsSS$-\emph{equivalence} if its image under $\cL_{\CsSS}$ is an equivalence.
	\end{definition}
	\begin{rem}
		Because $\cL_{\CsSS}$ preserves colimits, the collection of $\CsSS$-equivalences is closed under all colimits. Because equivalences in $\CsSS$ have the 2-out-of-3 property, so do $\CsSS$-equivalences. This makes the class of $\CsSS$-equivalences strongly saturated. By a small object argument (see for example \cite{mazelgee2025model} 3.6), the class of $\CsSS$-equivalences is in fact the smallest strongly saturated class containing $J_{JL}$.  
	\end{rem}
	As in \cite{harpaz2015quasi}, let
	\begin{align}\label{eq:marked_forgetful}
		\cF_{+} \colon s\cS \to ss\cS_{+}
	\end{align}
	denote the \textit{marked forgetful functor}, that takes a simplicial set to the semi-simplicial set $\cF(X)$ with the marking determined by (the image of) $s_0\colon X_0 \to X_1$. The main theorem of \cite{harpaz2015quasi} says that $\cF_{+}$ and its right adjoint $\cR_{+}$ give an equivalence of categories
	\begin{align}\label{eq:CSS=CsSS}
		\cL_{\CsSS}\circ \cF_{+}: \CSS \leftrightarrows \CsSS : \cR_{+} \, ,
	\end{align}
	where $\CSS$ denotes the full subcategory of complete Segal spaces, which is again equivalent to $\Cat_\infty$.

	We now set up a Joyal--Tierney calculus for marked bi-semi-simplicial spaces, which we use to give an explicit formula for the restricted localization
	\begin{align}\label{eq:restricted_L_CSS}
		\cL_{\CsSS}\colon \IK_{+} \to \CsSS\, .
	\end{align}
	The techniques used here are a generalization of those used in \cite{joyal2007quasi}. Define the category of \emph{marked bi-semi-simplicial spaces} to be
	\begin{align*}
		bss\cS_{+} &= \Fun(\Simp^{\op}_s, ss\cS_{+})\, .
	\end{align*} 
	The opposite of the Yoneda embedding $\Simp \to ss\cS$ is the free completion of $\Simp^{\op}$, so restriction determines an equivalence
	\begin{align}\label{eq:Fun*R_restriction}
		\Fun^{R}(ss\cS^{\op}, ss\cS_{+}) \xrightarrow{\sim} bss\cS_{+},
	\end{align}
	where $\Fun^{R}(\cC,\cD)\subset \Fun(\cC,\cD)$ denotes the full subcategory of limit preserving (right adjoint) functors. Right Kan extension gives an inverse of \cref{eq:Fun*R_restriction}, and we comppose to get a functor
	\begin{align}\label{eq:slash_construction}
		bss\cS_{+} \xrightarrow{\sim} \Fun^{R}(ss\cS^{\op}, ss\cS_{+}) \hookrightarrow \Fun(ss\cS^{\op}, ss\cS_{+}).
	\end{align}
	For an object $Z\in bss\cS_{+}$, write
	\begin{align*}
		Z/- \colon ss\cS^{\op} \to ss\cS_{+}
	\end{align*}
	for the value of  $Z$ under \cref{eq:slash_construction}. By construction, each $Z/-$ admits a left adjoint, and we write
	\begin{align*}
		-\backslash Z \colon ss\cS_{+} \to ss\cS^{\op}
	\end{align*}
	for the opposite of this left adjoint. Both $Z/-$ and $-\backslash Z$ map colimits to limits. 
	\begin{rem}\label{rem:box_product}
		For a fixed $X\in ss\cS_{+}$, the term  $X\backslash Z$ is covariantly natural in $Z$. As a functor $bss\cS_{+} \to ss\cS$ it can be described as the composite
		\begin{align}\label{eq:X_slash-}
			X\backslash- \colon bss\cS_{+} \xrightarrow{\sim} \Fun^{R}(ss\cS^{\op},ss\cS_{+}) \hookrightarrow \Fun(ss\cS^{\op},ss\cS_{+}) \xrightarrow{ev_{X}} ss\cS_{+} \, .
		\end{align}
		Each of the components preserves limits, so by coherently picking left adjoints to the functors \cref{eq:X_slash-}, one can produce a bifunctor
		\begin{align*}
			-\square - \colon ss\cS_{+} \timest ss\cS \to bss\cS
		\end{align*}
		preserving colimits in each variable, uniquely determined by the functor $X\backslash -$ being right adjoint to $X\square -$ and $-/Y$ being right adjoint to $-\square Y$. 
	\end{rem}
	\begin{rem}\label{rem:J_slash_Z_identification}
		Essentially by definition, we have that for $Z\in bss\cS_{+}$, the marked semi-simplicial space $Z/\Delta^{m}_{s}$ is the value of $Z$ at $[m] \in \Simp^{\op}_{s}$. Using the adjunction defining $-\backslash Z$ we have
		\begin{align*}
			\Map_{ss\cS}(\Delta^{m}_{s}, J\backslash Z) &\simeq \Map_{ss\cS_{+}}(J, Z / \Delta^{m}_{s} ) \\
			&\simeq \Map_{ss\cS_{+}}(J, Z([m])) \, . 
		\end{align*}
		This means that the bifunctor $\square$ can be uniquely characterized preserving colimits in each variable separately, 
		\begin{align*}
			\Map(\Delta^{n,\flat}_{s} \square \Delta^{m}_{s}, Z) \simeq Z([m])([n])
		\end{align*}
		and $\Map(\Delta^{1,\sharp}_{s} \square \Delta^{m}, Z)$ being the subspace of marked edges in $Z([m])([1])$.
	\end{rem}

	\begin{definition}\label{def:vertical_fibration}
		We say that a map of marked bi-semi-simplicial spaces $Z\to W$ is a \emph{vertical fibration} if for any map $j\colon J \to K \in I_{JL}$, the induced map
		\begin{align}\label{eq:vertical_fibration}
			K\backslash Z \to K \backslash W \timest_{J\backslash W} J\backslash Z 
		\end{align}
		is a Kan fibration of semi-simplicial spaces. We say that $Z$ is \emph{vertically fibrant} if the unique map to the terminal object is a vertical fibration. 
	\end{definition}
	\begin{rem}\label{rem:vertically_fibrant}
		Because $K\backslash -$ and $J\backslash -$ preserve limits, and in particular take the terminal object to the terminal object, we have that $Z$ is vertically fibrant if and only if $K\backslash Z \to J\backslash Z$ is a Kan fibration for all $J\to K \in I_{JL}$. 
	\end{rem}
	\begin{rem}\label{rem:vertically_fibrant_weakly_saturated}
		Because $-\backslash Z$ and $-\backslash W$ take colimits to limits, and because the class of Kan fibrations is closed under pullbacks, $X\to Y$ is a vertical fibration if and only if \cref{eq:vertical_fibration} is a Kan fibration for $j\colon J\to K$ any marked cofibration.
	\end{rem}
	Consider the functor
	\begin{align*}
		p^{*}\colon ss\cS_{+} = \Fun([0], ss\cS_{+}) \to \Fun(\Simp^{\op}_{s}, ss\cS_{+})
	\end{align*}
	determined by precomposition with the unique functor $\Simp^{\op}_{s}\to[0] $. This admits a left adjoint $p_{!}$ which takes a semi-simplicial object in $ss\cS_{+}$ to its colimit. In other words, $p_!Z$ is the realization of the semi-simplicial object $Z(\Delta^{\bullet}_{s}) = Z/\Delta^{\bullet}_{s}$. Using the adjunctions and the fact that mapping out of $\Delta^{n,\flat}$ preserves colimits in $ss\cS_{+}$, we get
	\begin{align}\label{eq:p!Z_n}
		\Map(\Delta^{n,\flat}_{s}, p_!Z) &\simeq \Vert \Map(\Delta^{n,\flat}_{s}, Z/\Delta^{\bullet}_{s}) \Vert \\
		&\simeq \Vert \Map(\Delta^{\bullet}_{s}, \Delta^{n,\flat}_{s}\backslash Z)\Vert  \nonumber \\
		&\simeq \Vert \Delta^{n,\flat}_{s}\backslash Z \Vert \,. \nonumber
	\end{align}
	The marking of $p_!Z$ is determined by the image of the colimit of marked edges in each $Z(\Delta^{m}_{s})$, so by a similar argument to the one above, $p_!Z$ is marked by the image of
	\begin{align}\label{eq:p!Z_marking}
		\Vert \Delta^{1,\sharp} \backslash Z \Vert \to \Vert \Delta^{1,\flat}_{s}\backslash Z \Vert \, .
	\end{align}
	The point of the vertical fibrancy condition is that it gives us enough Kan fibrations to commute limits with geometric realizations and therefore control $p_!Z$. Consider the presheaves $\Map(-, p_{!}Z)$ and $\Vert -\backslash Z \Vert$ on $ss\Set_{+}$. When restricted to the full subcategory $\cC \subset ss\Set_{+}$ spanned by $\Delta^{n,\flat}_{s}$ for $n\geq 0$ and $\Delta^{1,\sharp}_{s}$, we have a natural transformation
	\begin{align*}
		\alpha\colon \Vert - \backslash Z \Vert \to p_{!}Z(-)
	\end{align*}
	of presheaves on $\cC$ given by \cref{eq:p!Z_n} and \cref{eq:p!Z_marking}. Since $p_{!}Z(-)$ is right Kan extended from its restriction to $\cC$, we can uniquely extend $\alpha$ to a natural transformation of presheaves on $ss\Set_{+}$. 
	
	\begin{lem}\label{lem:p!Z}
		If $Z$ is a vertically fibrant bi-semi-simplicial space, and $J$ is a semi-simplicial set, then the natural transformation constructed above gives an equivalence
		\begin{align*}
			p_!Z(J) = \Vert J\backslash Z \Vert.
		\end{align*}
	\end{lem}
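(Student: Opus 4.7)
The plan is to express both $p_!Z(J)$ and $\Vert J\backslash Z \Vert$ as inverse limits indexed by the opposite of the simplex category $\Simp_s/J$, and then to commute the geometric realization past the limit using the Reedy fibrancy machinery already established. The hypothesis of vertical fibrancy is tailored precisely to provide the Kan fibrations that make this limit-realization exchange possible.

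First, I would expand $J$ as the canonical colimit of its simplices, $J^{\flat}\simeq \colim_{\Simp_s/J}\Delta^{n,\flat}_s$ in $ss\cS_+$. Since $-\backslash Z$ turns colimits into limits, this gives
\[
J\backslash Z \;\simeq\; \lim_{(\Simp_s/J)^{\op}}\, \Delta^{n,\flat}_s\backslash Z
\]
in $ss\cS$. On the other side, because $p_!Z(-)$ is the presheaf represented by $p_!Z\in ss\cS_+$, combining with \cref{eq:p!Z_n} yields
\[
p_!Z(J) \;\simeq\; \lim_{(\Simp_s/J)^{\op}}\, \Vert \Delta^{n,\flat}_s\backslash Z\Vert.
\]
Naturality of these identifications in the simplex category is automatic, and one checks that the comparison map between the two sides agrees with the natural transformation $\alpha$ described just before the lemma, since $\alpha$ is defined on representables by the same formula.

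The remaining task is to show that realization commutes with this particular limit. The category $(\Simp_s/J)^{\op}$ is inverse, because the dimension functor $[n]\mapsto n$ is strictly increasing on non-identity morphisms of $\Simp_s$. For a simplex $\sigma\colon[n]\to J$, the matching category consists of factorizations of $\sigma$ through proper injections $[m]\hookrightarrow [n]$, so the matching object identifies with
\[
\lim\, \Delta^{m,\flat}_s\backslash Z \;\simeq\; \partial\Delta^{n,\flat}_s\backslash Z,
\]
where the last equivalence applies the colimit-to-limit property of $-\backslash Z$ to the standard colimit presentation of $\partial\Delta^{n}_s$. Hence the matching morphism at $\sigma$ is the map $\Delta^{n,\flat}_s\backslash Z \to \partial\Delta^{n,\flat}_s\backslash Z$ induced by the boundary inclusion, which lies in $I_{JL}$; vertical fibrancy of $Z$ therefore makes it a Kan fibration, so the diagram is $J_{KQ}$-Reedy fibrant. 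Applying \cref{lem:inverse_lim_realization} then yields $\Vert J\backslash Z\Vert \simeq \lim \Vert \Delta^{n,\flat}_s\backslash Z\Vert \simeq p_!Z(J)$, completing the proof.

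The main technical obstacle I anticipate is the bookkeeping required to identify the matching object of the diagram $\tau\mapsto \Delta^{m_\tau,\flat}_s\backslash Z$ with $\partial\Delta^{n,\flat}_s\backslash Z$ in a way that precisely matches the matching-object construction of \cref{lem:inverse_lim_realization}. Once this identification is correctly formulated, the vertical fibrancy condition does exactly the work it was designed for, and the remainder of the argument is formal.
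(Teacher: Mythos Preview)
Your argument for unmarked $J$ is correct and is a genuinely different organization from the paper's. You package everything into a single Reedy-fibrancy statement over the inverse category $(\Simp_s/J)^{\op}$ and invoke \cref{lem:inverse_lim_realization} once; the paper instead builds $J$ by iterated cell attachment, using \cref{lem:semi_kan_realization} for each pushout along a cofibration and \cref{lem:inverse_lim_realization} for the resulting tower. Your route is cleaner when $J$ is unmarked, and the matching-object identification you flag as the ``main obstacle'' is exactly the computation $M_F([n],\sigma)\simeq \partial\Delta^{n,\flat}_s\backslash Z$, which follows directly from $-\backslash Z$ sending the colimit presentation of $\partial\Delta^n_s$ to a limit.

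There is, however, a genuine gap relative to what the paper actually proves and needs. Despite the wording of the statement, the natural transformation $\alpha$ is defined on $ss\Set_{+}$, and the proof in the paper establishes the equivalence for all \emph{marked} semi-simplicial sets; this is what is used immediately afterwards in \cref{cor:T(X)(J)} with marked $J$. Your simplex-category decomposition only reaches objects of the form $J^{\flat}$ and says nothing about $\Delta^{1,\sharp}_s$. The paper handles this generator by a separate argument: one must show that the map $\Vert \Delta^{1,\sharp}_s\backslash Z\Vert \to \Vert \Delta^{1,\flat}_s\backslash Z\Vert$ of \cref{eq:p!Z_marking} is a monomorphism, which is done by computing its fibers as realizations of levelwise-monic Kan fibrations pulled back along $T^{\sharp}$ (using \cref{lem:unit_is_anodyne} and \cref{lem:semi_kan_realization}) and observing the resulting semi-Kan space is either $\emptyset$ or $T$. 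Your Reedy approach does not supply this piece, so as written your proposal proves strictly less than the paper's proof and not enough for the downstream applications.
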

	\begin{proof}
		The functor $J\mapsto p_!Z(J)$ maps colimits to limits. By vertical fibrancy, \cref{lem:semi_kan_realization} and \cref{lem:inverse_lim_realization}, $J\mapsto \Vert J\backslash Z\Vert$ takes pushouts where one leg is a marked cofibration to pullbacks, and sequential colimits over marked cofibrations to inverse limits. The category of marked semi-simplicial sets is generated by the objects $\Delta^{n,\flat}_{s}$ and $\Delta^{1,\sharp}$ under such colimits. From \cref{eq:p!Z_n} the presheaves agree on $\Delta^{n,\flat}_{s}$ so only $\Delta^{1,\sharp}_{s}$ remains. Here it suffices to show that \cref{eq:p!Z_marking} is an equivalence onto its image, i.e. a monomorphism.
		
		The map $\Delta^{1,\sharp}_{s}\backslash Z \to \Delta^{1,\flat}_{s} \backslash Z$ is a Kan fibration between semi-Kan spaces by the vertical fibrancy of $Z$, and a levelwise monomorphism because the map at level $m$ is the inclusion of the marked edges in $Z/\Delta^{m}_{s}$. By \cref{lem:unit_is_anodyne}, we can extend any map $\Delta^{0}_{s} \to \Delta^{1,\flat}_{S} \backslash Z$ along $\Delta^{0}_{s} \to T$. We can then use \cref{lem:semi_kan_realization} to compute any fiber of \cref{eq:p!Z_marking} as the geometric realization of a Kan space
		\begin{align*}
			F= T\timest_{\Delta^{1,\flat}_{s}\backslash Z}  \Delta^{1,\sharp}_{s} \backslash Z \, ,
		\end{align*}
		which has each $F_{n} = \emptyset$ or $\ast$. As $F$ is Kan, this can only happen if $F= \emptyset$ or $T$. 
	\end{proof}
	\begin{lem}\label{lem:t*_characterization}
		There exists a limit-preserving functor 
		\begin{align*}
			t^{*}\colon ss\cS_{+} \to bss\cS_{+}
		\end{align*}
		uniquely determined by natural equivalences
		\begin{align*}
			J\backslash t^{*}X \simeq \widetilde{U}(X^{J}) \, .
		\end{align*}
	\end{lem}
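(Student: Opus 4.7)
The plan is to construct $t^{*}$ explicitly by setting
\begin{align*}
    t^{*}X([m]) = X^{\Delta^{m,\sharp}_{s}},
\end{align*}
where $X^{(-)}$ denotes the internal hom of the closed monoidal structure $\otimes$ on $ss\cS_{+}$. The contravariant functoriality in $[m]$ comes from the cosimplicial object $[m]\mapsto \Delta^{m,\sharp}_{s}$ in $ss\cS_{+}$, and the covariant functoriality in $X$ is inherent to the internal hom construction. Together these assemble into a functor $t^{*}\colon ss\cS_{+} \to bss\cS_{+}$.

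To verify the characterizing formula I would use the identification $t^{*}X([m]) = t^{*}X / \Delta^{m}_{s}$ from \cref{rem:J_slash_Z_identification}, the defining adjunction for $-\backslash(-)$, and the tensor-hom adjunction for $\otimes$, producing natural equivalences
\begin{align*}
    \Map_{ss\cS}(\Delta^{m}_{s}, J\backslash t^{*}X)
    &\simeq \Map_{ss\cS_{+}}(J, X^{\Delta^{m,\sharp}_{s}}) \\
    &\simeq \Map_{ss\cS_{+}}(\Delta^{m,\sharp}_{s} \otimes J, X) \\
    &\simeq \Map_{ss\cS_{+}}(\Delta^{m,\sharp}_{s}, X^{J}) = \widetilde{U}(X^{J})_{m}.
\end{align*}
This chain is natural in $[m]$, in $J$, and in $X$, so by Yoneda it assembles into the desired natural equivalence $J\backslash t^{*}X \simeq \widetilde{U}(X^{J})$ of semi-simplicial spaces.

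Limit preservation in $X$ then follows because $X \mapsto X^{\Delta^{m,\sharp}_{s}}$ is right adjoint to $(-) \otimes \Delta^{m,\sharp}_{s}$, and because limits in $bss\cS_{+} = \Fun(\Simp^{\op}_{s}, ss\cS_{+})$ are computed pointwise. For uniqueness, I would observe that under the equivalence \cref{eq:Fun*R_restriction}, the assignment $Z \mapsto (-\backslash Z)$ is precisely the passage to left adjoints of the corresponding limit-preserving functor $Z/-$, and therefore is fully faithful; thus the prescribed values of $-\backslash(-)$ pin down $t^{*}X$ up to canonical equivalence, naturally in $X$. The main piece of bookkeeping is arranging the natural equivalences above to be simultaneously coherent in all three variables, which is routine once one commits to a specific presentation of the adjunctions and uses the universal property of right Kan extension along the Yoneda embedding.
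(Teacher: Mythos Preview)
Your proposal is correct and amounts to the paper's argument run in reverse. The paper defines $t^{*}$ abstractly by currying the bifunctor $(X,J)\mapsto \widetilde{U}(X^{J})$ and invoking the equivalence $bss\cS_{+}\simeq \Fun^{R}(ss\cS^{\op},ss\cS_{+})$, and only afterwards (in the paragraph following the lemma) unwinds this to the explicit formula $t^{*}X\simeq X^{\Delta^{\bullet,\sharp}_{s}}$; you instead take that formula as the definition and verify the characterizing equivalence directly via the same chain of adjunctions. The content is identical.
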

	\begin{proof}
		Consider the functor
		\begin{align*}
			ss\cS_{+}\timest ss\cS_{+}^{\op} & \to ss\cS \\
			(X,J) & \mapsto \widetilde{U}(X^{J}) \, .
		\end{align*}
		Because $\widetilde{U}$ is a right adjoint, this preserves limits in each variable separately, so it corresponds under adjunction to a limit-preserving functor
		\begin{align*}
			ss\cS_{+} \to \Fun^{R}(ss\cS_{+}^{\op}, ss\cS) \, .
		\end{align*}
		The target is equivalent to $bss\cS_{+}$ with inverse given by $Z \mapsto -\backslash Z$. 
	\end{proof}
	
	The following lemma is the key to relating right lifting properties and orthogonality.
	\begin{lem}\label{lem:marked_inner_anodyne_stability}
		If $A\to B$ is a marked inner anodyne map of marked semi-simplicial sets, and $J\to K$ is a marked cofibration between marked semi-simplicial sets, then the induced map
		\begin{align*}
			A\otimes K \amalga_{A\otimes J} B\otimes J \to B\otimes K
		\end{align*}
		is also marked inner anodyne.
	\end{lem}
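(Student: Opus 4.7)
The plan is to run the standard two-variable weakly-saturated calculus. Since the bifunctor $-\otimes-$ on $ss\cS_{+}$ preserves colimits in each variable, and since both marked inner anodynes and marked cofibrations are weakly saturated classes (closed under pushout, transfinite composition, and retracts), the class of marked cofibrations $J\to K$ for which the conclusion holds (with $A\to B$ fixed and marked inner anodyne) is itself weakly saturated, and symmetrically in the other variable. It therefore suffices to verify the lemma when $A\to B$ lies in the generating set $J_{JL}$ and $J\to K$ lies in $I_{JL}$. This reduces us to four cases: (i) an admissible horn $(\Horn^{n}_{i,s}, A)\to(\Delta^{n}_{s},B)$ against a boundary inclusion $\del\Delta^{m,\flat}_{s}\to\Delta^{m,\flat}_{s}$; (ii) an admissible horn against $\Delta^{1,\flat}_{s}\to\Delta^{1,\sharp}_{s}$; (iii) $S_{2/6}$ against a boundary inclusion; (iv) $S_{2/6}$ against $\Delta^{1,\flat}_{s}\to\Delta^{1,\sharp}_{s}$.

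For (i), I would imitate Joyal's argument for the Leibniz product with inner horn inclusions. Enumerate the nondegenerate simplices $\sigma$ of $\Delta^{n}\otimes\Delta^{m}$ that are not contained in the domain of the pushout-product, ordered so that each attachment glues $\sigma$ onto the previous stage along a horn. In the inner case $0<i<n$ the relevant horns are unmarked inner horns of dimension $n+m$, exactly as in the classical argument. In an outer case, say $i=0$ with the edge $\{0,1\}\subset\Delta^{n}$ marked, the outer horns that appear at each step carry as their distinguished edge precisely the image of a marked edge of $B$; compatibility of the marking on $B\otimes K$ (as defined in the paper's geometric-product monoidal structure on $ss\cS_{+}$) with this filtration ensures that each such horn is admissible in the sense of \cref{def:admissible_horn}. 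The analogous analysis works for $i=n$.

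For (ii), the pushout-product is the inclusion of one marking of $B\otimes\Delta^{1}_{s}$ into another: we are adding markings to new edges $\{b\}\times\Delta^{1}$ and compatible edges. These can be added one at a time by admissible horn inclusions together with, where needed, $S_{2/6}$ applied to $3$-simplices sitting inside $B\otimes\Delta^{1}_{s}$ that witness the new edge as an equivalence; the compositional relations among these edges arise from the two-out-of-six structure and recycle the argument already familiar from \cite{harpaz2015quasi}. Cases (iii) and (iv) are of the same flavor: the pushout-product of $S_{2/6}$ with a generating marked cofibration adds marks to certain edges of $\Delta^{3}_{s}\otimes K$, and these new markings can be produced by iterated admissible horn fillings and further applications of $S_{2/6}$ to suitably placed tetrahedra, using that our class of anodynes is closed under pushouts and transfinite composition.

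The main obstacle is the combinatorial bookkeeping in (i): one must order the nondegenerate simplices of $\Delta^{n}\otimes\Delta^{m}$ so that each attachment is indeed an admissible horn, and in outer cases one must verify that the ``special'' edge of each attaching horn is forced to lie over a marked edge of $B$. This is essentially a careful adaptation of the Joyal calculus to the marked semi-simplicial geometric product $\otimes$, where the absence of degeneracies simplifies the combinatorics (no shuffles collapse) but one must be attentive to the marking data inherited through the pushout-product.
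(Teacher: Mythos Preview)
Your strategy is sound and is essentially the content of the references the paper cites. The paper's own proof is much shorter: it simply invokes \cite{henry2018weak} 5.3.10 and 5.5.6 for the result in the $1$-category $ss\Set_{+}$, and then observes that because all objects in sight are marked semi-simplicial \emph{sets} and the relevant maps are levelwise injective, the pushout $A\otimes K \amalga_{A\otimes J} B\otimes J$ computed in $ss\Set_{+}$ is also a pushout in $ss\cS_{+}$. The entire combinatorial analysis you outline---the reduction to generators, the shuffle filtration in case~(i), the marking chase in cases~(ii)--(iv)---is precisely what Henry's cited results establish; the paper deliberately avoids reproducing it.

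What your approach would buy is self-containedness, at the cost of substantial bookkeeping; your own treatment of cases~(ii)--(iv) is already fairly impressionistic, and the interaction of $S_{2/6}$ with the pushout-product is not entirely routine. What the paper's approach buys is brevity: the only genuinely new content beyond the citation is the one-line transfer from $ss\Set_{+}$ to $ss\cS_{+}$, justified by the fact that pushouts along levelwise injections of semi-simplicial sets remain pushouts when regarded in $ss\cS_{+}$.
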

	\begin{proof}
		This result holds in the category $ss\Set_+$ by \cite{henry2018weak} 5.3.10 and 5.5.6. By \cref{rem:cof_weakly_saturated}, any marked inner anodyne between marked semi-simplicial sets is levelwise injective. Pushouts in $ss\Set_{+}$ where one leg is levelwise injective are also pushouts when considered as diagrams in $ss\cS_{+}$, and so the result carries over to $ss\cS_{+}$.
	\end{proof}
	\begin{cor}\label{cor:marked_fibration_stability}
		If $X\to Y$ is a marked inner fibration, and $j\colon J\to K$ a marked cofibration between marked semi-simplicial sets, then the induced map
		\begin{align}
			f\colon X^{K} \to X^{J}\timest_{Y^{J}} Y^{K}
		\end{align}
		is also a marked inner fibration. Moreover, if $j$ is marked inner anodyne, then $f$ is a trivial fibration. 
	\end{cor}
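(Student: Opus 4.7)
The proof should be a direct application of the pushout-product/pullback-exponent adjunction together with \cref{lem:marked_inner_anodyne_stability}. The plan is to reduce both statements to lifting properties of the original map $X \to Y$ by passing across the adjunction $- \otimes - \dashv (-)^{(-)}$ for the closed monoidal structure on $ss\cS_+$.

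First I would observe that for any map $i \colon A \to B$ of marked semi-simplicial spaces, the usual calculus of adjoints identifies lifting problems of $i$ against $f \colon X^K \to X^J \timest_{Y^J} Y^K$ with lifting problems of the pushout-product
\[
i \square j \colon A \otimes K \amalga_{A \otimes J} B \otimes J \to B \otimes K
\]
against the original morphism $X \to Y$. Thus it suffices to control when $i \square j$ is marked inner anodyne.

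For the first part, to check that $f$ is a marked inner fibration I only need to verify the right lifting property against every $i \in J_{JL}$ (equivalently, by \cref{rem:cof_weakly_saturated}, against every marked inner anodyne). For any such $i$ and the given marked cofibration $j$, \cref{lem:marked_inner_anodyne_stability} produces that $i \square j$ is marked inner anodyne, and so the assumption that $X \to Y$ is a marked inner fibration (hence has the right lifting property against the weakly saturated class of marked inner anodynes) yields the desired lift. For the second part, where $j$ itself is marked inner anodyne, I would check right lifting against an arbitrary marked cofibration $i$; using the symmetry of $\otimes$ to swap the roles of $i$ and $j$ in \cref{lem:marked_inner_anodyne_stability} again shows $i \square j$ is marked inner anodyne, so $X \to Y$ lifts against it.

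There is essentially no obstacle here beyond correctly invoking the adjunction and the symmetry of the pushout-product; the real content is already packaged in \cref{lem:marked_inner_anodyne_stability}. The only minor subtlety to be careful about is that we may restrict attention to marked semi-simplicial \emph{set} inputs $A, B, J, K$: for the generating classes $I_{JL}$ and $J_{JL}$ this is automatic, and for arbitrary marked (inner anodyne) cofibrations it follows from closure of the right lifting class under the weakly saturated operations recorded in \cref{rem:closure_of_fibrations} and \cref{rem:cof_weakly_saturated}.
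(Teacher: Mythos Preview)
Your proposal is correct and follows essentially the same route as the paper: reduce via the $\otimes$-exponential adjunction to a lifting problem for $X\to Y$ against the pushout-product, then invoke \cref{lem:marked_inner_anodyne_stability} (using symmetry of $\otimes$ for the second claim). The paper's proof is just a terser rendering of exactly this argument.
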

	\begin{proof}
		Let $s\colon S\to T$ be any marked inner anodyne map of semi-simplicial sets. By adjunction, it suffices to show that $X\to Y$ has the right lifting property with respect to
		\begin{align*}
			S\otimes K \amalga_{S\otimes J} T\otimes J \to T\otimes K
		\end{align*}
		which is marked inner anodyne by \cref{lem:marked_inner_anodyne_stability}. If $j$ is marked inner anodyne, we can take $s$ to be any monomorphism.
	\end{proof}
	\begin{cor}\label{cor:tildeU_fibrations}
		If $X\to Y$ is a marked inner fibration, and $j\colon J\to K$ is a marked cofibration between marked semi-simplicial sets, then the induced map
		\begin{align*}
			f\colon \widetilde{U}(X^{K}) \to \widetilde{U}(X^{J} \timest_{Y^{J}} Y^{K})
		\end{align*}
		is a Kan fibration. Moreover, if $j$ is marked inner anodyne, then $f$ is a trivial fibration.
	\end{cor}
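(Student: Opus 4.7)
The plan is to combine \cref{cor:marked_fibration_stability} with an adjunction argument using $(-)^{\sharp}\dashv\widetilde{U}$. Write $g$ for the map $X^{K}\to X^{J}\timest_{Y^{J}}Y^{K}$. By \cref{cor:marked_fibration_stability}, $g$ is already a marked inner fibration, and a marked trivial fibration when $j$ is marked inner anodyne. So it suffices to prove the following abstract claim: for any marked inner fibration $p\colon E\to B$, the map $\widetilde{U}(p)$ is a Kan fibration of semi-simplicial spaces, and for any marked trivial fibration $p$, the map $\widetilde{U}(p)$ is a trivial Kan fibration.

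Since $(-)^{\sharp}$ is left adjoint to $\widetilde{U}$, the map $\widetilde{U}(p)$ has the right lifting property against a morphism $i\colon I\to I'$ of semi-simplicial spaces if and only if $p$ has the right lifting property against $i^{\sharp}\colon I^{\sharp}\to I'^{\sharp}$. Because $(-)^{\sharp}$ preserves colimits and $\Horn^{n}_{i,s}$, $\del\Delta^{n}_{s}$ are built as colimits of simplices along injective maps, we have $(\Horn^{n}_{i,s})^{\sharp}=\Horn^{n,\sharp}_{i,s}$ and $(\del\Delta^{n}_{s})^{\sharp}=\del\Delta^{n,\sharp}_{s}$. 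So the Kan fibration condition reduces to $p$ having RLP against $\Horn^{n,\sharp}_{i,s}\to\Delta^{n,\sharp}_{s}$ for all $0\leq i\leq n$, $n\geq 1$, and the trivial Kan fibration condition reduces to $p$ having RLP against $\del\Delta^{n,\sharp}_{s}\to\Delta^{n,\sharp}_{s}$.

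The boundary inclusion $\del\Delta^{n,\sharp}_{s}\to\Delta^{n,\sharp}_{s}$ is a levelwise injection between marked semi-simplicial sets, hence a marked cofibration by \cref{rem:levelwise_injections}, settling the trivial-fibration case. The remaining, and hardest, point is that each fully marked horn inclusion $\Horn^{n,\sharp}_{i,s}\to\Delta^{n,\sharp}_{s}$ is marked inner anodyne. For marked semi-simplicial sets, this is established in \cite{henry2018weak} by an induction on $n$ that filters the inclusion through admissible horn fillers (\cref{def:admissible_horn}) and the generator $S_{2/6}$, iteratively promoting the edges left unmarked after an admissible fill. Since marked inner anodyne maps of marked semi-simplicial sets are levelwise injective (\cref{rem:cof_weakly_saturated}), the same pushout-preservation argument used in \cref{lem:marked_inner_anodyne_stability} transports the conclusion from $ss\Set_{+}$ to $ss\cS_{+}$.
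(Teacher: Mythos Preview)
Your proposal is correct and follows essentially the same approach as the paper. Both arguments reduce to showing that $\widetilde{U}$ carries marked inner fibrations to Kan fibrations and marked trivial fibrations to trivial Kan fibrations, via the adjunction $(-)^{\sharp}\dashv\widetilde{U}$ together with \cref{cor:marked_fibration_stability}; the only cosmetic difference is that the paper applies \cref{cor:marked_fibration_stability} last rather than first, and cites \cite{harpaz2015quasi} (Lemmas 2.1.7 and 2.1.8) rather than \cite{henry2018weak} for the fact that the fully marked horn inclusions $\Horn^{n,\sharp}_{i,s}\to\Delta^{n,\sharp}_{s}$ are marked inner anodyne.
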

	\begin{proof}
		By the proofs of \cite{harpaz2015quasi} 2.1.7 and 2.1.8, the maps $\Horn^{n,\sharp}_{i,s} \to \Delta^{n,\sharp}_{i}$ are marked inner anodyne, so by adjunction $\widetilde{U}$ takes marked inner fibrations to Kan fibrations. The maps $\del \Delta^{n,\sharp} \to \Delta^{n, \sharp}$ are marked cofibrations, so by adjunction, $\widetilde{U}$ takes trivial fibrations to trivial fibrations. The result then follows from \cref{cor:marked_fibration_stability}.
	\end{proof}

	\begin{cor}\label{lem:inner_kan_fibrant}
		If $X$ is a marked inner Kan space, then the marked bi-semi-simplicial space $t^*X$ is vertically fibrant.
	\end{cor}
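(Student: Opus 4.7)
The plan is to reduce directly to \cref{cor:tildeU_fibrations} via the defining characterization of $t^{*}$.

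By \cref{rem:vertically_fibrant}, it suffices to show that for every generating marked cofibration $j\colon J\to K$ in $I_{JL}$, the induced map
\begin{align*}
    K\backslash t^{*}X \to J \backslash t^{*}X
\end{align*}
is a Kan fibration of semi-simplicial spaces. Using the characterization of $t^{*}$ from \cref{lem:t*_characterization}, this is the same as asking that $\widetilde{U}(X^{K}) \to \widetilde{U}(X^{J})$ be a Kan fibration.

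Since $X$ is a marked inner Kan space, the unique map $X \to T^{\sharp}$ to the terminal object of $ss\cS_{+}$ is a marked inner fibration. Because $T^{\sharp}$ is terminal, $(T^{\sharp})^{J} \simeq (T^{\sharp})^{K} \simeq T^{\sharp}$, so the pullback $X^{J}\timest_{(T^{\sharp})^{J}} (T^{\sharp})^{K}$ appearing in \cref{cor:tildeU_fibrations} collapses to $X^{J}$. The generating marked cofibrations in $I_{JL}$ are all maps between marked semi-simplicial sets, so \cref{cor:tildeU_fibrations} applies and yields that $\widetilde{U}(X^{K}) \to \widetilde{U}(X^{J})$ is a Kan fibration, as required.

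There is no real obstacle here: the statement is essentially the translation of \cref{cor:tildeU_fibrations} across the identification $J\backslash t^{*}X \simeq \widetilde{U}(X^{J})$, applied to the terminal map out of $X$. The only care needed is checking that the terminal object of $ss\cS_{+}$ is $T^{\sharp}$ and that its exponentials remain terminal, which is immediate.
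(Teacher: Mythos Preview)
Your proof is correct and follows essentially the same approach as the paper: reduce vertical fibrancy to the condition on $K\backslash t^{*}X \to J\backslash t^{*}X$, translate via the defining property of $t^{*}$ to $\widetilde{U}(X^{K}) \to \widetilde{U}(X^{J})$, and invoke \cref{cor:tildeU_fibrations}. You are simply more explicit than the paper about citing \cref{rem:vertically_fibrant} and about why the pullback in \cref{cor:tildeU_fibrations} collapses when $Y = T^{\sharp}$.
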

	\begin{proof}
		By the universal property defining $t^{*}X$ it suffices to show that for any marked cofibration $J\to K$ between semi-simplicial sets, the map
		\begin{align*}
			\widetilde{U}(X^{K}) &\to \widetilde{U}(X^{J})\\
		\end{align*}
		is a Kan fibration. This follows immediately from \cref{cor:tildeU_fibrations}.
	\end{proof}
	Consider the composite functor
	\begin{align*}
		\cT \colon \IK_{+} \subset ss\cS_{+} \xrightarrow{t^{*}} bss\cS_{+} \xrightarrow{p_{!}} ss\cS_{+} \, .
	\end{align*}
	We will show that $\cT$ is equivalent to the restricted localization \cref{eq:restricted_L_CSS}.
	
	\begin{cor}\label{cor:T(X)(J)}
		Let $X$ be a marked inner Kan space, and $J$ a marked semi-simplicial set. Then we have a natural equivalence
		\begin{align*}
			\cT(X)(J) \simeq \Vert \widetilde{U}(X^{J}) \Vert \, .
		\end{align*}
	\end{cor}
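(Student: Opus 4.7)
The plan is to chain together the three technical lemmas and corollaries built up in this section. By definition, $\cT(X)(J) = p_{!}(t^{*}X)(J)$, so it suffices to produce a natural equivalence between this expression and $\Vert \widetilde{U}(X^{J}) \Vert$.

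First I would verify that the input hypotheses of \cref{lem:p!Z} are satisfied for $Z = t^{*}X$. Since $X$ is a marked inner Kan space, \cref{lem:inner_kan_fibrant} tells us that $t^{*}X$ is vertically fibrant, which is exactly the hypothesis needed. Applying \cref{lem:p!Z} then yields
\begin{align*}
p_{!}(t^{*}X)(J) \simeq \Vert J \backslash t^{*}X \Vert \, .
\end{align*}
Next I would use the defining property of $t^{*}$ from \cref{lem:t*_characterization}, which gives the natural equivalence $J \backslash t^{*}X \simeq \widetilde{U}(X^{J})$. Composing this with the equivalence above and applying geometric realization produces the claimed formula.

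The one point that requires a moment's care is that \cref{cor:T(X)(J)} allows $J$ to be a \emph{marked} semi-simplicial set, whereas the statement of \cref{lem:p!Z} refers to $J$ as a semi-simplicial set. However, the proof of \cref{lem:p!Z} already handles both generating objects $\Delta^{n,\flat}_{s}$ and $\Delta^{1,\sharp}_{s}$, and extends by colimit arguments that work uniformly in the marked category; in particular, the marking of $p_{!}(t^{*}X)$ matches the marking inherited from $\widetilde{U}$ applied to the exponential $X^{\Delta^{1,\sharp}_{s}}$, so no further argument is required to handle markings on $J$. Naturality in $J$ follows from the naturality of each of the three equivalences in the chain.
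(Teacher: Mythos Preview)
Your proof is correct and follows essentially the same approach as the paper: apply \cref{lem:inner_kan_fibrant} to get vertical fibrancy of $t^{*}X$, invoke \cref{lem:p!Z} to identify $p_{!}(t^{*}X)(J)$ with $\Vert J\backslash t^{*}X\Vert$, and then use \cref{lem:t*_characterization} to rewrite the latter as $\Vert \widetilde{U}(X^{J})\Vert$. Your extra paragraph about the marked versus unmarked $J$ is a reasonable observation about a slight imprecision in the statement of \cref{lem:p!Z}, whose proof does indeed already treat the marked case.
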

	\begin{proof}
		By \cref{lem:inner_kan_fibrant}, the marked bi-semi-simplicial space $t^{*}X$ is vertically fibrant. For any marked semi-simplicial set $J$, we have by \cref{lem:t*_characterization} that
		\begin{align*}
			\cT(X)(J) \simeq p_{!}t^{*}X(J) \simeq \Vert J\backslash t^{*}X \Vert \simeq \Vert \widetilde{U}(X^{J}) \Vert\, .
		\end{align*}
	\end{proof}
	\begin{cor}\label{lem:inner_fibration_pullback}
		Assume that in a pullback square of marked semi-simplicial spaces
		\begin{equation*} \cQ =
			\begin{tikzcd}
				W \arrow[d] \arrow[r]& X \arrow[d, "f"] \\
				Z \arrow[r] &  Y \, ,
			\end{tikzcd}
		\end{equation*}
		the objects are marked inner Kan spaces, and the map $f$ is a marked inner fibration. Then the square $\cT(\cQ)$ is also a pullback.
	\end{cor}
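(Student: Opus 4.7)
The plan is to reduce the claim, via \cref{cor:T(X)(J)}, to a pullback-preservation statement for geometric realizations of semi-Kan spaces, and then invoke \cref{lem:semi_kan_realization}. All the relevant machinery has been assembled in this section precisely for this combination.

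First, since pullbacks in $ss\cS_+$ are computed on the underlying semi-simplicial space and on the marking (pullbacks of monomorphisms remain monomorphisms), it is enough to check that $\cT(\cQ)(J)$ is a pullback of spaces for every generating marked semi-simplicial set $J \in \{\Delta^{n,\flat}_s, \Delta^{1,\sharp}_s\}$, or more generally for any marked semi-simplicial set $J$. By \cref{cor:T(X)(J)}, each such evaluation is naturally equivalent to $\Vert \widetilde{U}(\cQ^J) \Vert$, so the problem reduces to showing that $\Vert \widetilde{U}(\cQ^J) \Vert$ is a pullback in $\cS$ for every marked semi-simplicial set $J$.

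Next, I would observe that the functor $\widetilde{U}((-)^J)$ is a composition of right adjoints — the internal hom $(-)^J$ is right adjoint to $- \otimes J$, and $\widetilde{U}$ is right adjoint to $(-)^\sharp$ — hence it preserves pullbacks. Thus $\widetilde{U}(\cQ^J)$ is already a pullback square of semi-simplicial spaces. To commute the geometric realization past this pullback using \cref{lem:semi_kan_realization}, I need each vertex of the square to be a semi-Kan space and the map $\widetilde{U}(X^J) \to \widetilde{U}(Y^J)$ to be a Kan fibration. Both follow from \cref{cor:tildeU_fibrations} applied to the marked cofibration $\emptyset \to J$: the Kan fibration uses the marked inner fibration $f\colon X \to Y$, while the semi-Kan properties come from applying the same corollary to the maps $W, X, Y, Z \to T^\sharp$, each of which is a marked inner fibration because the source is a marked inner Kan space; here one uses that $(T^\sharp)^J = T^\sharp$ and $\widetilde{U}(T^\sharp) = T$.

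I expect no serious obstruction beyond bookkeeping. The only conceptual subtlety is that one must recognize marked inner Kan spaces as $J_{JL}$-fibrant over $T^\sharp$ in order to apply \cref{cor:tildeU_fibrations} uniformly to all four objects; this is already used implicitly in \cref{lem:inner_kan_fibrant}.
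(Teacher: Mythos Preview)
Your proposal is correct and follows essentially the same route as the paper: reduce to evaluating $\cT(\cQ)$ at an arbitrary marked semi-simplicial set $J$, identify this via \cref{cor:T(X)(J)} with the realization of the pullback square $\widetilde{U}(\cQ^{J})$, use \cref{cor:tildeU_fibrations} to see that the objects are semi-Kan and the right vertical is a Kan fibration, and conclude by \cref{lem:semi_kan_realization}. Your explicit remark that $\widetilde{U}((-)^{J})$ is a composite of right adjoints, and your unpacking of why each corner is semi-Kan, are spelled out in slightly more detail than in the paper, but the argument is the same.
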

	\begin{proof}
		It suffices to check that we get a pullback square $\cT(\cQ)(J)$ when we evaluate at $J$ any marked simplex. By \cref{cor:T(X)(J)},  $\cT(\cQ)(J)$ is equivalent to the geometric realization of the following pullback square.
		\begin{equation*}
			\begin{tikzcd}
				\widetilde{U}(W^J) \arrow[r] \arrow[d] & \widetilde{U}(X^{J}) \arrow[d] \\
				\widetilde{U}(Z^{J}) \arrow[r] & \widetilde{U}(Y^{J})
			\end{tikzcd}
		\end{equation*}
		By \cref{cor:tildeU_fibrations}, the objects in this diagram are semi-Kan spaces, and the vertical maps are Kan fibrations. The pullback is therefore preserved under geometric realization by \cref{lem:semi_kan_realization}.
	\end{proof}
	\begin{cor}\label{cor:inverse_limit_marked}
		Let $\cI$ be an inverse category, and $F\colon \cI \to ss\cS_{+}$ a $J_{JL}$-Reedy fibrant diagram. Then the terms $F(i)$, as well as the limit $\lim(F)$ are marked inner Kan spaces, and the induced map
		\begin{align}\label{eq:inverse_limit_marked_1}
			\cT(\lim(F)) \to \lim(\cT(F))
		\end{align}
		is an equivalence.
	\end{cor}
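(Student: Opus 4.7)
The plan is to first verify that $\lim F$ and each $F(i)$ are marked inner Kan spaces, then to unfold both sides of \cref{eq:inverse_limit_marked_1} using \cref{cor:T(X)(J)}, and finally to apply \cref{lem:inverse_lim_realization} to commute the geometric realization with the limit.

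Since $T^{\sharp}$ is the terminal object of $ss\cS_{+}$ and the matching objects of the constant diagram $\underline{T^{\sharp}}$ are themselves terminal, \cref{lem:inverse_limit_preserves_fib} applied to $F \to \underline{T^{\sharp}}$ immediately gives that $\lim F$ is marked inner Kan. To handle each $F(i)$, I would transfinitely induct along the ordinal $\cA$ supplied by the inverse structure: the inductive hypothesis gives that $F(j)$ is marked inner Kan for every $j \in (\cI_{i/})^{\circ}$, and the restricted diagram $F|_{(\cI_{i/})^{\circ}}$ remains $J_{JL}$-Reedy fibrant because the matching objects over $(\cI_{i/})^{\circ}$ coincide with those computed in $\cI$. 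Applying the case already settled yields that $M_{F}(i)$ is marked inner Kan, and then the Reedy fibration $F(i) \to M_{F}(i)$ passes this property to $F(i)$.

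For the equivalence I would evaluate at an arbitrary marked semi-simplicial set $K$ and check the resulting map of spaces. Using that $(-)^{K}$, $\widetilde{U}$, and the evaluation $\mathrm{ev}_{K}$ all preserve limits, together with \cref{cor:T(X)(J)}, both sides of \cref{eq:inverse_limit_marked_1} unfold at $K$ as
\begin{align*}
    \cT(\lim F)(K) &\simeq \Vert \widetilde{U}((\lim F)^{K}) \Vert \simeq \Vert \lim_{i} \widetilde{U}(F(i)^{K}) \Vert, \\
    (\lim \cT(F))(K) &\simeq \lim_{i} \Vert \widetilde{U}(F(i)^{K}) \Vert,
\end{align*}
so the task reduces to applying \cref{lem:inverse_lim_realization} to the diagram $G\colon \cI \to ss\cS$ defined by $G(i) = \widetilde{U}(F(i)^{K})$.

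To finish I would verify that $G$ is $J_{KQ}$-Reedy fibrant. The matching object $M_{G}(i)$ identifies with $\widetilde{U}(M_{F}(i)^{K})$ by limit preservation, so the matching map of $G$ at $i$ is $\widetilde{U}(F(i)^{K}) \to \widetilde{U}(M_{F}(i)^{K})$. Since $\emptyset \to K$ is a marked cofibration (\cref{rem:levelwise_injections}) and $F(i) \to M_{F}(i)$ is a marked inner fibration, applying \cref{cor:tildeU_fibrations} with input cofibration $\emptyset \to K$ produces precisely this matching map as a Kan fibration. The main obstacle is really just the bookkeeping in the induction establishing $M_{F}(i)$ as marked inner Kan; once the restricted Reedy fibrancy is in hand, the rest is a direct assembly of the lemmas at hand.
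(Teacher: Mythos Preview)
Your proof is correct and follows essentially the same approach as the paper: verify fibrancy of $\lim F$ and each $F(i)$ via \cref{lem:inverse_limit_preserves_fib}, evaluate at a marked semi-simplicial set, unwind both sides using \cref{cor:T(X)(J)}, check that $i\mapsto \widetilde{U}(F(i)^{K})$ is $J_{KQ}$-Reedy fibrant via \cref{cor:tildeU_fibrations}, and conclude with \cref{lem:inverse_lim_realization}. The only cosmetic difference is that the paper obtains $F(i)$ marked inner Kan by writing it as $\lim F|_{\cI_{\le i}}$, whereas you induct to show $M_{F}(i)$ is marked inner Kan and then use the matching fibration; these are two phrasings of the same reduction.
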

	\begin{proof}
		Each $F(i)$ is the limit of the restriction of $F$ to $\cI_{\leq i}$, and one can check that $F_{\leq i} \to \underline{T}$ is a $J_{JL}$-Reedy fibration, so by \cref{lem:inverse_limit_preserves_fib}, the induced map on limits $F(i)\to T$ is a marked inner fibration. This also works for $i = \infty$ to get that $\lim F \to T$ is a marked inner fibration.
		
		To show that \cref{eq:inverse_limit_marked_1} is an equivalence, it suffices to show that it is an equivalence when evaluating at any marked semi-simplicial space $J$. Because evaluation and exponentiation preserve limits, we can use \cref{cor:T(X)(J)} to identify the map in question with
		\begin{align}\label{eq:inverse_limit_marked_2}
			\Vert \widetilde{U}(\lim F^{J}) \Vert \to \lim \Vert \widetilde{U}(F^{J}) \Vert \, .
		\end{align}
		Because exponentiation and $\widetilde{U}$ preserve limits, the matching map at $i$ of the diagram $\widetilde{U}(F^{J})\colon \cI \to ss\cS$ can be identified with
		\begin{align*}
			\widetilde{U}(F(i)^{J}) \to \widetilde{U}(M_{F}(i)^{J}) \, ,
		\end{align*}
		which is a Kan fibration by \cref{cor:tildeU_fibrations} and the $J_{JL}$-Reedy fibrancy of $F$. This shows that $\widetilde{U}(F^{J})$ is $J_{KQ}$-Reedy fibrant, so by \cref{lem:inverse_lim_realization}, the map \cref{eq:inverse_limit_marked_2} is an equivalence.
	\end{proof}
	Note that for any marked semi-simplicial set $J$, we have
	\begin{align*}
		\Map(J, t^{*}X(\Delta^{n}_{s})) &\simeq \Map(\Delta^{n}_{s}, J\backslash t^{*}X) \\
		&\simeq \Map(\Delta^{n}_{s}, \widetilde{U}(X^{J}) ) \\
		&\simeq \Map(\Delta^{n,\sharp}_{s} \otimes J, X) \\
		&\simeq \Map(J,X^{\Delta^{n,\sharp}_{s}}) \, .
	\end{align*}
	These equivalences are furthermore natural in $J$, so we can think of $t^{*}X$ as the semi-simplicial object $X^{\Delta^{\bullet,\sharp}_{s}}$ in $ss\cS_{+}$, and $p_{!}t^{*}X$ as the realization of this object. The zeroeth level of this object is $X^{\Delta^{0}_{s}}\simeq X$, so by including into the colimit we get a map
	\begin{align*}
		u_{X}\colon X\to \cT(X) \, .
	\end{align*}
	This construction is also natural in $X$, meaning that it extends to a natural transformation $u\colon id\to \cT$. 
	\begin{lem}\label{lem:u_is_trivial_fib}
		For a marked inner Kan space $X$, the map $u_{X}\colon X\to \cT(X)$ is a trivial Kan fibration.
	\end{lem}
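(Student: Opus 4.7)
The plan is to prove the stronger statement that for every marked semi-simplicial set $J$, the induced map $u_X(J) \colon X(J) \to \cT(X)(J)$ is already an equivalence of spaces. Given this, for any $j \colon J \to K$ in $I_{JL}$, the comparison map $X(K) \to X(J) \timest_{\cT(X)(J)} \cT(X)(K)$ is a pullback of equivalences along an equivalence, hence itself an equivalence, and in particular surjective on $\pi_0$. This gives the required right lifting property against $I_{JL}$.

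By \cref{cor:T(X)(J)}, $\cT(X)(J) \simeq \Vert \widetilde{U}(X^J) \Vert$, and under this identification $u_X(J)$ is the inclusion $\widetilde{U}(X^J)_0 = X(J) \hookrightarrow \Vert \widetilde{U}(X^J) \Vert$ of the zeroth level of a semi-Kan space into its realization. I will show that the functor $\widetilde{U}(X^J) \colon \Simp_{s}^{\op} \to \cS$ inverts every morphism. Since the classifying space $\vert \Simp_{s}^{\op} \vert$ is contractible (as witnessed by $\Vert T \Vert \simeq \ast$), the functor then factors through a contractible space, is therefore equivalent to the constant semi-simplicial space at $X(J)$, and its realization recovers $X(J)$ via the inclusion of level $0$, which is precisely $u_X(J)$.

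Since every morphism in $\Simp_{s}^{\op}$ is a composite of face maps, it suffices to show that each face map $d_i \colon \widetilde{U}(X^J)_m \to \widetilde{U}(X^J)_{m-1}$ is an equivalence. Recall from the preceding subsection that each vertex inclusion $\Delta^{0}_{s} \hookrightarrow \Delta^{k,\sharp}_{s}$ is marked inner anodyne. Applying \cref{lem:marked_inner_anodyne_stability} to this inclusion and the cofibration $\emptyset \to J$, the tensor with $\mathrm{id}_{J}$ remains marked inner anodyne, so \cref{cor:marked_fibration_stability} and \cref{cor:tildeU_fibrations} produce a trivial Kan fibration $\widetilde{U}(X^{\Delta^{k,\sharp}_{s} \otimes J}) \to \widetilde{U}(X^{J})$ whose $0$-th level is a vertex-evaluation equivalence $e^{k}_{j} \colon \widetilde{U}(X^{J})_{k} \to X(J)$, one for each choice of vertex $j \in [k]$. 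For a face inclusion $d^{i} \colon [m-1] \hookrightarrow [m]$, unwinding the definition of the face maps of $\widetilde{U}(X^{J})$ as precomposition with $d^{i} \otimes \mathrm{id}_{J}$ yields the identity $e^{m-1}_{0} \circ d_{i} = e^{m}_{d^{i}(0)}$, exhibiting $d_{i}$ as the composite of two equivalences; by two-out-of-three, $d_{i}$ is an equivalence.

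The main care needed is the bookkeeping of vertex compositions in this last identity, but once verified the essentially constant argument then delivers the lemma. No further structural input is required beyond the marked inner anodyneness of vertex inclusions and the computation of $\cT(X)$ provided by \cref{cor:T(X)(J)}.
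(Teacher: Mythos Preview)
Your argument contains a genuine error. You assert that the vertex-evaluation map $e^{k}_{j}\colon \widetilde{U}(X^{J})_{k} \to X(J)$ is an \emph{equivalence}, but the hypothesis on $X$ gives only the right lifting property against $J_{JL}$, not orthogonality. Restriction along a marked inner anodyne morphism is therefore only an epimorphism (surjective on $\pi_{0}$), and the $0$-th level of a trivial Kan fibration of semi-simplicial spaces is likewise only an epimorphism, not an equivalence. Your claim would go through if $X$ were already a complete semi-Segal space, but then the lemma is trivial.

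In fact the stronger statement you aim for is false. Take $G$ a nontrivial group and $X = \cF(N(BG))^{\sharp}$. This is a marked inner Kan space: the nerve of a groupoid is a Kan complex, so all horn fillers exist, and since every edge is marked the $S_{2/6}$ condition is automatic. Here $\widetilde{U}(X) = \cF(N(BG))$, hence $\cT(X)_{0} \simeq \Vert \cF(N(BG)) \Vert \simeq BG$, while $X_{0} = \ast$. The component of $u_{X}$ at $J = \Delta^{0}_{s}$ is therefore $\ast \to BG$, which is not an equivalence. Thus $u_{X}$ is a trivial fibration but genuinely not a levelwise equivalence, and your essentially-constant argument cannot succeed.

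The paper's proof targets the correct weaker statement directly: it shows that $X(K) \to X(J)\timest_{\cT(X)(J)} \cT(X)(K)$ is an epimorphism by first basechanging to the discrete $\pi_{0}X(J)$, then using \cref{lem:semi_kan_realization} to commute a pullback past the realization, and finally observing that the resulting comparison is an equivalence on $0$-simplices and hence an epimorphism on realizations.
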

	\begin{proof}
		Let $J\to K$ be a marked cofibration between marked semi-simplicial sets. By picking a base point in each component of $X(J)$, we get a section of the natural projection $X(J) \to \pi_{0}X(J)$, which is by definition an epimorphism. Consider the following diagram of spaces defined by all squares being pullbacks.
		\begin{equation}\label{eq:U_is_trivial_fib_0}
			\begin{tikzcd}
				X(K)' \arrow[d] \arrow[r] & X(K) \arrow[d] \\ 
				P'  \arrow[r] \arrow[d]      &   P \arrow[d]  \arrow[r] & \cT(X)(K) \arrow[d] \\
				\pi_{0}X(J)  \arrow[r]               &   X(J)  \arrow[r]            & \cT(X)(J)
			\end{tikzcd}
		\end{equation} 
		To prove the result, we must show that $X(K) \to P$ is an epimorphism. Because epimorphisms of spaces are preserved under base change, the maps $P'\to P$ and $X(K)'\to X(K)$ are epimorphisms, so by 2-out-of-3, it suffices to show that $X(K)' \to P'$ is an epimorphism. By \cref{cor:T(X)(J)}, $P'$ is equivalent to the pullback 
		\begin{align*}
			\pi_{0}X(J) \timest_{\Vert \widetilde{U}(X^{J}) \Vert} \Vert \widetilde{U}(X^{K}) \Vert \, .
		\end{align*}
		By construction of $u$, we may think of $\pi_{0}X(J)$ as a semi-simplicial set concentrated in degree zero, and the map used in forming the above pullback as the realization of the map
		\begin{align}\label{eq:U_is_trivial_fib_1}
			\pi_{0}X(J) \to X(J) \to \widetilde{U}(X^{J}) \, ,
		\end{align}
		where the second factor is the inclusion of 0-simplices. The constant semi-simplicial set $\underline{\pi_{0}X(J)}$ is equivalently $\cF\cL(\pi_{0}X(J))$, and so by \cref{lem:unit_is_anodyne} and $\widetilde{U}(X^{J})$ being an inner Kan space, we may extend \cref{eq:U_is_trivial_fib_1} to a map
		\begin{align*}
			\underline{\pi_{0}X(J)} \to \widetilde{U}(X^{J}) \, ,
		\end{align*}
		whose realization produces a map equivalent to \cref{eq:U_is_trivial_fib_1}. Because $\underline{\pi_{0}X(J)}$ is a semi-Kan space and $\widetilde{U}(X^{K}) \to \widetilde{U}(X^{J})$ a Kan fibration betweem semi-Kan spaces, it follows from \cref{lem:semi_kan_realization} that the pullback $\underline{\pi_{0}X(J)} \timest_{\widetilde{U}(X^{J})} \widetilde{U}(X^{K})$ is preserved under realization. We may therefore identify the map $X(K)' \to P'$ with the realization of
		\begin{align}\label{eq:u_is_trivial_fib_2}
			\underline{X(K)}' \to \underline{\pi_{0}X(J)} \timest_{\widetilde{U}(X^{J})} \widetilde{U}(X^{K}) \, .
		\end{align}
		To prove that a map of semi-simplicial spaces becomes an epimorphism after geometric realization, it suffices to prove that the map on $0$-simplicices is an epimorphism. On $0$-simplices, \cref{eq:u_is_trivial_fib_2} is given by 
		\begin{align*}
			X(K)' \to \pi_{0}X(J)\timest_{X(J)} X(K) \, , 
		\end{align*}
		which is an equivalence because the left rectangle of \cref{eq:U_is_trivial_fib_0} is a pullback.
	\end{proof}
	
	\begin{definition}\label{def:delta1_homotopy}
		A $\Delta^{1,\sharp}_{s}$-\emph{homotopy} between two maps $f,g\colon X\to Y$ of marked semi-simplicicial spaces is a map $H\colon X \otimes \Delta^{1, \sharp}_{s}\to Y$  which restricts along $id \otimes d_i \colon X\otimes \Delta^{0}_{s} \to X\otimes \Delta^{1, \sharp}_{s}$ to $f$ and $g$, respectively.
	\end{definition}
	\begin{definition}\label{def:delta1_homotopy_eq}
		A map $f\colon X\to Y$ of marked semi-simplicial spaces is a $\Delta^{1,\sharp}_{s}$-\emph{homotopy equivalence} if there exists a map $g\colon Y\to X$ and $\Delta^{1,\sharp}_{s}$-homotopies $g\circ f \sim id_{X}$ and $f\circ g \sim id_{Y}$. 
	\end{definition}
	By \cref{lem:marked_inner_anodyne_stability}, either inclusion $id\otimes d_{i}\colon X \otimes \Delta^{0}_{s} \to X\otimes \Delta^{1,\sharp}_{s}$ is marked inner anodyne, and so in particular a $\CsSS$-equivalence when $X$ is levelwise discrete. An arbitrary $X$ is a colimit of levelwise discrete objects, and since $\otimes$ preserves colimits in each variable, $X\to X\otimes \Delta^{1,\sharp}_{s}$ is always a $\CsSS$-equivalence. It follows that any $\Delta^{1,\sharp}_{s}$-homotopy equivalence is also a $\CsSS$-equivalence.
	\begin{lem}\label{lem:trivial_fibration=>eq}
		A trivial fibration $f\colon X\to Y$ between marked inner Kan spaces is a $\CsSS$-equivalence.
	\end{lem}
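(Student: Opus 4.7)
The plan is to prove that $\cT(f)$ is an equivalence in $ss\cS_+$ and then deduce the conclusion from the naturality square for the unit $u \colon \mathrm{id} \to \cT$.

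The first key step is a pushout-product property dual to \cref{cor:marked_fibration_stability}: if $f\colon X\to Y$ is a trivial fibration of marked semi-simplicial spaces and $j\colon J\to K$ is a marked cofibration between marked semi-simplicial sets, then the pullback-hom map $X^K \to X^J \times_{Y^J} Y^K$ is again a trivial fibration. By adjunction this reduces to showing that the class of marked cofibrations between marked semi-simplicial sets is closed under pushout-product with itself, which can be checked on the generating set $I_{JL}$ from \cref{def:generating_marked_cofibrations}. Taking $J = \emptyset$ shows in particular that $f^K \colon X^K \to Y^K$ is a trivial fibration for every marked semi-simplicial set $K$.

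Since $(-)^\sharp$ sends the boundary inclusions $\del \Delta^n_s \to \Delta^n_s$ to marked cofibrations, its right adjoint $\widetilde U$ sends trivial fibrations in $ss\cS_+$ to trivial Kan fibrations in $ss\cS$. Hence $\widetilde U(X^K) \to \widetilde U(Y^K)$ is a trivial Kan fibration between semi-Kan spaces (the latter by \cref{cor:tildeU_fibrations}), and \cref{lem:trivial_kan->geometric_eq} ensures that its geometric realization is an equivalence in $\cS$. By \cref{cor:T(X)(J)} this realization is precisely $\cT(f)(K)$, so $\cT(f)$ is levelwise an equivalence of spaces. The marking of $\cT(X)$ is the image of $\Vert \widetilde U(X^{\Delta^{1,\sharp}_s}) \Vert \to \Vert \widetilde U(X^{\Delta^{1,\flat}_s}) \Vert$, and since both terms are transported to equivalences under $\cT(f)$, the induced map on markings is also an equivalence; thus $\cT(f)$ is an equivalence in $ss\cS_+$.

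To conclude, consider the naturality square
\[
\begin{tikzcd}
X \ar[r, "u_X"] \ar[d, "f"'] & \cT(X) \ar[d, "\cT(f)"] \\
Y \ar[r, "u_Y"'] & \cT(Y).
\end{tikzcd}
\]
By \cref{lem:u_is_trivial_fib} the horizontal maps are trivial Kan fibrations, and $\cT(f)$ is an equivalence by the above. The main obstacle is then confirming that $u_X$ and $u_Y$ are themselves $\CsSS$-equivalences: a trivial Kan fibration is only a levelwise equivalence of underlying semi-simplicial spaces and need not a priori be an equivalence in $ss\cS_+$. I expect this to follow by exhibiting $\cT(X)$ as a suitable colimit of objects $X^{\Delta^{n,\sharp}_s}$ and using that the collapse $\Delta^{n,\sharp}_s \to \Delta^{0,\sharp}_s$ is a $\CsSS$-equivalence, since all its edges are marked and hence become invertible in any $\CsSS$-localization. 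Once this is established, 2-out-of-3 for $\CsSS$-equivalences applied to the square yields the result.
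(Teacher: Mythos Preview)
Your first step---showing that $\cT(f)$ is an equivalence in $ss\cS_+$---is correct and is in fact a clean argument. The gap is entirely in the second step, and it is a real one.

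You need $u_X$ and $u_Y$ to be $\CsSS$-equivalences in order to run the 2-out-of-3 argument. In the paper this fact is \cref{cor:T_is_L}, and it is deduced \emph{from} the lemma you are trying to prove. So as written your argument is circular. Your proposed fix does not escape this. First, there is no collapse map $\Delta^{n,\sharp}_s \to \Delta^{0}_s$ in $ss\cS_+$: semi-simplicial objects have no degeneracies, and $\Delta^0_s$ is not terminal (cf.\ \cref{rem:terminal_is_not_unit}). Second, even if you reinterpret the idea as using the marked inner anodyne inclusions $\Delta^{0}_s \to \Delta^{n,\sharp}_s$ to argue that the face maps of the semi-simplicial object $X^{\Delta^{\bullet,\sharp}_s}$ become equivalences after applying $\cL_{\CsSS}$, you are stuck: those face maps $X^{\Delta^{n,\sharp}_s} \to X^{\Delta^{n-1,\sharp}_s}$ are trivial fibrations by \cref{cor:marked_fibration_stability}, and showing that a trivial fibration is a $\CsSS$-equivalence is exactly the statement under consideration. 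The exponentiation lemma you might hope to invoke (\cref{lem:CsSS_exponentiation}) only applies when the base is already a complete semi-Segal space, which $X$ need not be.

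The paper breaks the circularity by a different route. It first treats the levelwise discrete case, where one can appeal to the weak Joyal--Lurie model structure on $ss\Set_+$: there a trivial fibration between bifibrant objects is a $\Delta^{1,\sharp}_s$-homotopy equivalence, and such homotopy equivalences are $\CsSS$-equivalences directly (since $X \to X \otimes \Delta^{1,\sharp}_s$ is marked inner anodyne). For general $X$ and $Y$, the paper then constructs a vertical fibration $\overline{f}\colon \overline{X} \to \overline{Y}$ of marked bi-semi-simplicial \emph{sets} with $p_!\overline{f} \simeq f$, and shows that each row $\overline{f}/\Delta^m_s$ is a trivial fibration between levelwise discrete marked inner Kan spaces, hence a $\CsSS$-equivalence by the first part. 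Since $\CsSS$-equivalences are closed under colimits, $f$ itself is one. The point is that the discrete case gives an anchor that does not rely on already knowing $\cT \simeq \cL_{\CsSS}$.
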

	\begin{proof}
		First, assume that $X$ and $Y$ are levelwise discrete. Then the map $f\colon X\to Y$ is a trivial fibration between bifibrant objects in the weak Joyal--Lurie model structure. By \cite{henry2018weak}, trivial fibrations are also homotopy equivalences, and homotopy in the weak Joyal--Lurie model structure is precisely $\Delta^{1,\sharp}_{s}$-homotopy. As remarked above, $\Delta^{1,\sharp}_{s}$-homotopy equivalences are $\CsSS$-equivalences.

		If $X$ and $Y$ are arbitrary marked inner Kan spaces, the map on underlying semi-simplicial spaces $U(X)\to U(Y)$ can be modelled as a vertical fibration between vertically fibrant objects in $\Fun(\Delta_{s}^{\op}, s\Set)$. By forgetting degeneracies in the target, this produces a map $U(\overline{f})\colon U(\overline{X}) \to U(\overline{Y})$ of bi-semi-simplicial sets. Using the adjunction
		\begin{align*}
			\Map(\Delta^{1}_{s}, U(\overline{X})/\Delta^{m}_{s} ) \simeq \Map(\Delta^{m}, \Delta^{1}_{s}\backslash U(\overline{X}) ) \, ,
		\end{align*}
		we can mark the semi-simplicial set $U(\overline{X})/\Delta^{m}_{s}$ by marking all the simplices 
		\begin{align*}
			\Delta^{m}_{s} \to \Delta^{1}_{s}\backslash U(\overline{X})
		\end{align*}
		whose induced map on geometric realizations
		\begin{align*}
			\Vert \Delta^{m}_{s} \Vert \to \Vert \Delta^{1}_{s}\backslash U(\overline{X}) \Vert \simeq X(\Delta^{1,\flat}_{s})
		\end{align*}
		factors through the marking of $X$. Doing the same for $Y$ produces a map $\overline{f}\colon \overline{X} \to \overline{Y}$ of marked bi-semi-simplicial sets such that $p_{!}\overline{f} \simeq f$. We claim moreover that $\overline{f}$ is a vertical fibration between vertically fibrant objects. Because we assumed that $U(\overline{f})$ is a vertical fibration, it only remains to check that the maps
		\begin{align*}
			\Delta^{1,\sharp}_{s}\backslash \overline{Y} & \to \Delta^{1,\flat}_{s}\backslash \overline{Y}, \\
			\Delta^{1,\sharp}_{s} \backslash \overline{X} &\to \Delta^{1,\sharp}_{s} \backslash \overline{Y} \timest_{\Delta^{1,\flat}_{s}\backslash \overline{Y}} \Delta^{1,\flat}_{s} \backslash \overline{X} \, ,
		\end{align*}
		are Kan fibrations. This follows because we defined the marking in terms of geometric realization, and horn inclusions become equivalences on geometric realization. 
		
		Let $J\to K$ be a marked inner anodyne map between marked semi-simplicial sets. Because $Y$ is a marked inner Kan space, the map
		\begin{align*}
			Y(K) \to Y(J)
		\end{align*}
		is surjective on connected components. Because $\overline{Y}$ is vertically fibrant, we can use \cref{lem:p!Z} to write this as the geometric realization of the map
		\begin{align}\label{eq:trivial_fib_is_eq}
			K\backslash \overline{Y} \to J\backslash \overline{Y},
		\end{align}
		which is moreover a Kan fibration between Kan semi-simplicial sets. Since \cref{eq:trivial_fib_is_eq} is a Kan fibration and induces surjection on connected components, it must necessarily be surjective on $0$-cells. Because the inclusion of the zeroeth vertex $\Delta^{0}_{s} \to \Delta^{m}_{s}$ is anodyne, it follows by Kan fibrancy that \cref{eq:trivial_fib_is_eq} is also surjective on $m$-cells for all $m$. By adjunction, this implies that each row $Y/\Delta^{m}_{s}$ is marked inner Kan.
		
		Now let $J\to K$ be any marked cofibration between marked semi-simplicial sets. Because $f$ is a trivial fibration, the map
		\begin{align}\label{eq:trivial_fib_is_eq_2}
			X(K) \to X(J) \timest_{Y(J)} Y(K)
		\end{align}
		is surjective on connected components. Because $\overline{f}$ is a vertical fibration, we can use \cref{lem:p!Z,lem:semi_kan_realization} to write \cref{eq:trivial_fib_is_eq_2} as the geometric realization of the Kan fibration
		\begin{align*}
			K\backslash \overline{X} \to J\backslash \overline{X} \timest_{J\backslash \overline{Y}} K\backslash \overline{Y}.
		\end{align*}
		Arguing as above, this map is surjective on $m$-simplices for all $m$, so each row
		\begin{align*}
			\overline{f}/\Delta^{m}_{s} \colon \overline{X}/\Delta^{m}_{s} \to \overline{Y}/\Delta^{m}_{s}
		\end{align*}
		is a trivial fibration between levelwise discrete marked inner Kan spaces, and therefore a $\CsSS$ equivalence by the first part of this proof. Hence $f\simeq p_! \overline{f}$ is a colimit of $\CsSS$ equivalences, and therefore itself a $\CsSS$-equivalence.
	\end{proof}
	
	\begin{cor}\label{cor:T_is_L}
		For a marked inner Kan space $X$, the map $u_{X}\colon X\to \cT(X)$ factors uniquely through an equivalence $\widetilde{u}_{X} \colon \cL_{\CsSS}(X) \to \cT(X)$.
	\end{cor}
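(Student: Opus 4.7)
The plan is to first verify that $\cT(X)$ already lies in $\CsSS$; the desired factorization and the fact that it is an equivalence then drop out of the universal property of localization combined with what has already been proved.

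For the main step, I would show $\cT(X)$ is orthogonal to each generator $j\colon J \to K$ in $J_{JL}$. Since $T^\sharp$ is the terminal marked semi-simplicial space and $(T^\sharp)^J \simeq T^\sharp$, orthogonality of $\cT(X) \to T^\sharp$ reduces to checking that $\cT(X)(K) \to \cT(X)(J)$ is an equivalence of spaces. By \cref{cor:T(X)(J)}, this map is identified with
\begin{align*}
\Vert \widetilde{U}(X^K) \Vert \to \Vert \widetilde{U}(X^J) \Vert.
\end{align*}
Every element of $J_{JL}$ is (trivially) marked inner anodyne, so \cref{cor:tildeU_fibrations} applied to the marked inner fibration $X \to T^\sharp$ shows that $\widetilde{U}(X^K) \to \widetilde{U}(X^J)$ is a trivial Kan fibration between semi-Kan spaces; \cref{lem:trivial_kan->geometric_eq} then yields the required equivalence on realizations.

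Given that $\cT(X) \in \CsSS$, the universal property of $\cL_{\CsSS}$ produces a unique factorization $\widetilde{u}_X\colon \cL_{\CsSS}(X) \to \cT(X)$ of $u_X$ through the unit $\eta_X\colon X \to \cL_{\CsSS}(X)$. To conclude that $\widetilde{u}_X$ is an equivalence, I would combine \cref{lem:u_is_trivial_fib} (so $u_X$ is a trivial fibration) with \cref{lem:trivial_fibration=>eq} to deduce that $u_X$ is a $\CsSS$-equivalence, and observe that $\eta_X$ is a $\CsSS$-equivalence by construction. By 2-out-of-3, $\widetilde{u}_X$ is a $\CsSS$-equivalence between objects of $\CsSS$, hence an equivalence. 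There is no real obstacle here; the nontrivial input, namely that exponentials in marked inner Kan spaces interact well with $\widetilde{U}$, is already supplied by \cref{cor:tildeU_fibrations}, and the rest is formal.
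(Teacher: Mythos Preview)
Your proposal is correct and follows essentially the same approach as the paper: obtain the factorization from $\cT(X)\in\CsSS$, then use \cref{lem:u_is_trivial_fib} and \cref{lem:trivial_fibration=>eq} together with 2-out-of-3 to conclude that $\widetilde{u}_X$ is an equivalence. The only difference is that the paper simply asserts that $\cT(X)$ is a complete semi-Segal space, whereas you spell out a proof of this via \cref{cor:T(X)(J)}, \cref{cor:tildeU_fibrations}, and \cref{lem:trivial_kan->geometric_eq}; this is a helpful elaboration rather than a different route.
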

	\begin{proof}
		The factorization exists because $\cT(X)$ is a complete semi-Segal space. The map $\widetilde{u}_{X}$ is a $\CsSS$ equivalence by 2-out-of-3 because $X\to \cT(X)$ is a trival Kan fibration and therefore a $\CsSS$ equivalence by \cref{lem:trivial_fibration=>eq}.
	\end{proof}
	Because every marked complete semi-Segal space is also a marked inner Kan space, we get a restricted localization
	\begin{align*}
		\cT :  \IK_{+} \leftrightarrows \CsSS : i_{\CsSS}.
	\end{align*}
	\begin{cor}\label{cor:T_characterization}
		The restricted functor $\cT\colon \IK_{+} \to \CsSS$ exhibits a localization at trivial fibrations.
	\end{cor}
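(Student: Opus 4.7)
The plan is to recognize $\cT$ as a reflective localization of $\IK_{+}$ onto $\CsSS$ and then to upgrade its universal property from the class of all $\CsSS$-equivalences down to the a priori smaller class of trivial fibrations, using that each unit of the reflection is itself a trivial fibration.

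By \cref{cor:T_is_L}, on $\IK_{+}$ the functor $\cT$ is naturally equivalent to the restriction of the reflector $\cL_{\CsSS}$. Since $\CsSS \subset \IK_{+}$ is a full subcategory and $\cL_{\CsSS}$ lands in $\CsSS$, the ambient adjunction $\cL_{\CsSS} \dashv i_{\CsSS}$ on $ss\cS_{+}$ restricts to an adjunction $\cT \colon \IK_{+} \leftrightarrows \CsSS \colon i_{\CsSS}$ with fully faithful right adjoint; its unit is the natural transformation $u \colon \mathrm{id}_{\IK_{+}} \to i_{\CsSS} \cT$. By \cref{lem:u_is_trivial_fib} every component $u_{X}$ is a trivial fibration, and by \cref{lem:trivial_fibration=>eq} combined with \cref{cor:T_is_L} the functor $\cT$ sends trivial fibrations to equivalences in $\CsSS$.

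For any $\infty$-category $\cD$, write $W$ for the class of trivial fibrations in $\IK_{+}$ and $\Fun^{W}(\IK_{+}, \cD)$ for the full subcategory of functors inverting $W$. I will check that precomposition
\begin{align*}
\cT^{*} \colon \Fun(\CsSS, \cD) \to \Fun(\IK_{+}, \cD)
\end{align*}
is fully faithful with essential image $\Fun^{W}(\IK_{+}, \cD)$, which is precisely the universal property of a localization at $W$. The adjunction $\cT \dashv i_{\CsSS}$ induces an adjunction $i_{\CsSS}^{*} \dashv \cT^{*}$ by precomposition; since $\cT \circ i_{\CsSS} \simeq \mathrm{id}_{\CsSS}$, the counit of this induced adjunction is an equivalence, so $\cT^{*}$ is fully faithful. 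Containment of the essential image in $\Fun^{W}(\IK_{+}, \cD)$ is immediate from $\cT$ inverting $W$. For the reverse containment, given $F \in \Fun^{W}(\IK_{+}, \cD)$, whiskering $F$ with $u$ produces a natural transformation $F(u) \colon F \to F \circ i_{\CsSS} \circ \cT = \cT^{*}(F|_{\CsSS})$ whose components $F(u_{X})$ are equivalences, exhibiting $F \simeq \cT^{*}(F|_{\CsSS})$ in the essential image.

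The main conceptual obstacle is bridging the a priori restrictive class of trivial fibrations with the broader class of $\CsSS$-equivalences that any reflective localization automatically inverts. This bridge is supplied entirely by \cref{lem:u_is_trivial_fib}: since every unit map is itself a trivial fibration, no further maps need be inverted to reconstruct an arbitrary $W$-inverting $F$ from its restriction $F|_{\CsSS}$.
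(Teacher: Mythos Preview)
Your proof is correct and uses essentially the same ingredients as the paper: \cref{lem:u_is_trivial_fib}, \cref{lem:trivial_fibration=>eq}, and \cref{cor:T_is_L}. The only cosmetic difference is packaging: the paper first forms the localization $\cL_{w}\colon \IK_{+} \to \IK_{+}[w^{-1}]$ and then exhibits an explicit inverse to the induced comparison functor $\IK_{+}[w^{-1}] \to \CsSS$ (again using that $u$ has trivial-fibration components), whereas you verify the universal property of a localization at $W$ directly via the induced adjunction $i_{\CsSS}^{*} \dashv \cT^{*}$.
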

	\begin{proof}
		Write $\cL_{w}\colon \IK_{+} \to \IK_{+}[w^{-1}]$ for the localization at trivial fibrations. By \cref{lem:trivial_fibration=>eq}, trivial fibrations between inner Kan spaces are $\CsSS$ equivalences, so by the universal property of localizations, the restriction of $\cL_{\CsSS}$ factors as 
		\begin{align*}
			\IK_{+} \xrightarrow{\cL_{w}} \IK_{+}[w^{-1}] \xrightarrow{F} \CsSS \, .
		\end{align*}
		Now we claim that the composite 
		\begin{align*}
			G\colon \CsSS \xhookrightarrow{i_{\CsSS}} \IK_{+} \xrightarrow{\cL_{w}} \IK_{+}[w^{-1}]
		\end{align*}
		is an inverse of $F$. The relation $F\circ G\sim id$ follows immedeatly from $\cL_{\CsSS} \circ i_{\CsSS} \sim id$. For the other direction, consider the following commutative diagram.
		\begin{equation*}
			\begin{tikzcd}
				\IK_{+} \arrow[r, "\cT"] \arrow[d, "\cL_{w}"]      &   \CsSS  \arrow[d, "id"]  \arrow[r, "i_{\CsSS}"] & \IK_{+} \arrow[d, "\cL_{w}"] \\
				\IK_{+}[w^{-1}]  \arrow[r, "F"]               &   \CsSS  \arrow[r, "G"]            & \IK_{+}[w^{-1}]
			\end{tikzcd}
		\end{equation*}
		Because the natural transformation $u\colon id \to \cT$ has components that are trivial fibrations, the whiskered transformation 
		\begin{align*}
			\cL_{w}(u)\colon \cL_{w} \to \cL_{w} \circ \cT \simeq G\circ F \circ \cL_{w}
		\end{align*}
		is a natural equivalence. It then follows from the uniqueness part of the universal property of localizations that $G\circ F \sim id$. 
	\end{proof}

	\section{Slices and functor categories}\label{sec:slices_and_functor_categories}
	Thanks to the description of $\cT$ as a localization at fibrations, we can describe how $\cT$ behaves with respect to certain right-adjoint constructions. In particular, we will treat exponential objects and slices. The case of slices is of particular interest to applications to flow categories \cite{HP} because slices govern limits and colimits. A central part of proving that $\infty$-categories of flow categories are stable is identifying the most basic limits and colimits.
	
	In this section, we observe a typical feature of weak model structures: constructions are well-behaved as long as we map out of cofibrant (levelwise discrete) objects into fibrant objects (marked inner Kan spaces).
	
	\begin{cor}\label{cor:T(X)^J}
		Let $X$ be a marked inner Kan space, and $J$ a marked semi-simplicial set. Then the natural map
		\begin{align*}
			\cT(X^{J}) \to \cT(X)^{J}
		\end{align*}
		is an equivalence.
	\end{cor}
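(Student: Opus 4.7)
The plan is to prove the statement by evaluating both sides explicitly on marked semi-simplicial sets using the formula of \cref{cor:T(X)(J)}. Since a marked semi-simplicial space is determined by its values on marked semi-simplicial sets, it suffices to produce a natural equivalence $\cT(X^{J})(K) \simeq \cT(X)^{J}(K)$ for $K$ of the form $\Delta^{n,\flat}_{s}$ and $\Delta^{1,\sharp}_{s}$, compatible with the canonical comparison map.

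First, I would verify that $X^{J}$ is itself a marked inner Kan space so that \cref{cor:T(X)(J)} applies to it. This follows from \cref{cor:marked_fibration_stability} applied to the marked inner fibration $X \to T^{\sharp}$ and the marked cofibration $\emptyset \to J$: the induced map $X^{J} \to T^{\sharp}$ is then a marked inner fibration, which is the defining property of a marked inner Kan space. Note also that if $K$ is any marked semi-simplicial set, then $K \otimes J$ is again a marked semi-simplicial set, since the underlying semi-simplicial space of $K \otimes J$ is the geometric product of two semi-simplicial sets.

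Given the above, for any marked semi-simplicial set $K$, the exponential adjunction combined with \cref{cor:T(X)(J)} gives
\begin{align*}
    \cT(X^{J})(K) &\simeq \Vert \widetilde{U}((X^{J})^{K}) \Vert \simeq \Vert \widetilde{U}(X^{K \otimes J}) \Vert,\\
    \cT(X)^{J}(K) &\simeq \Map(K \otimes J, \cT(X)) \simeq \cT(X)(K \otimes J) \simeq \Vert \widetilde{U}(X^{K \otimes J}) \Vert,
\end{align*}
where the last equivalence in the second line uses that $K \otimes J$ is a marked semi-simplicial set, so \cref{cor:T(X)(J)} applies.

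The main remaining point is to check that these identifications are compatible with the natural comparison map $\cT(X^{J}) \to \cT(X)^{J}$, which is induced by whiskering the natural transformation $u \colon \mathrm{id} \to \cT$ with the exponential $(-)^{J}$. Unwinding definitions, this map is the one induced by including $X^{K\otimes J}$ into the semi-simplicial diagram whose realization computes $\cT$, and the two descriptions above assemble these inclusions in the same way. Both sides are levelwise equivalent to $\Vert \widetilde{U}(X^{K \otimes J}) \Vert$, so the natural map is an equivalence. This step is essentially bookkeeping, but it is where one must be careful, since a priori $\cT(X)^{J}$ is only known to be a marked semi-simplicial space (not necessarily in $\CsSS$), so the argument has to proceed by direct levelwise comparison rather than by invoking the universal property of the localization.
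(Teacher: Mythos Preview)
Your proposal is correct and follows essentially the same approach as the paper: both evaluate at an arbitrary marked semi-simplicial set, invoke \cref{cor:marked_fibration_stability} to ensure $X^{J}$ is marked inner Kan, and then apply \cref{cor:T(X)(J)} twice together with the exponential law $(X^{J})^{K} \simeq X^{K\otimes J}$ to obtain the chain of equivalences. The paper is slightly terser and does not spell out the compatibility with the comparison map (treating it as the canonical $\colim\lim \to \lim\colim$ map), but the argument is the same.
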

	\begin{proof}
		The natural map comes from a $(\colim \lim \to \lim \colim)$ comparison. Evaluating at any marked semi-simplicial set $S$, we get
		\begin{align*}
			\cT(X^{J})(S) &\simeq \Vert \widetilde{U}( (X^{J})^{S}) \Vert \\
			&\simeq \Vert \widetilde{U}( X^{J\otimes S} ) \Vert \\
			&\simeq \cT(X)(J\otimes S) \\
			&\simeq \cT(X)^{J}(S) \, .
		\end{align*}
		Here, we use the fact that $X^{J}$ is a marked inner Kan space by \cref{cor:marked_fibration_stability} and then apply \cref{cor:T(X)(J)} twice.
	\end{proof}
	\begin{lem}\label{lem:CsSS_exponentiation}
		Let $W$ be a marked complete semi-Segal space, and $J\to K$ a $\CsSS$-equivalence. Then restriction induces an equivalence $W^{K}\to W^{J}$ between marked complete semi-Segal spaces.
	\end{lem}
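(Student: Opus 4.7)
The plan is a standard saturation argument. Fix $W\in\CsSS$. By the Yoneda lemma and the tensor-exponential adjunction on $ss\cS_+$, the map $W^K\to W^J$ is an equivalence in $ss\cS_+$ precisely when
\[
\Map(T\otimes K,W)\longrightarrow \Map(T\otimes J,W)
\]
is an equivalence for every $T\in ss\cS_+$. Since $W$ inverts $\CsSS$-equivalences, it therefore suffices to establish the following stability claim: for every $T\in ss\cS_+$, the functor $T\otimes-$ sends $\CsSS$-equivalences to $\CsSS$-equivalences.

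To prove the stability claim, observe first that the class of $f$ for which $T\otimes f$ is a $\CsSS$-equivalence is strongly saturated (using that $T\otimes-$ preserves colimits and that $\CsSS$-equivalences are closed under colimits, retracts, and 2-out-of-3), so by the description of $\CsSS$-equivalences as the strong saturation of $J_{JL}$ it suffices to verify it for $f\in J_{JL}$. A symmetric saturation argument on the $T$ variable further reduces to the case in which $T$ is a marked semi-simplicial set. Now every $j\in J_{JL}$ is itself marked inner anodyne (by \cref{rem:cof_weakly_saturated}, any generator of a class of cofibrations belongs to that class). Applying \cref{lem:marked_inner_anodyne_stability} to the marked cofibration $\emptyset\to T$ and to $j$, the Leibniz product collapses to $T\otimes j$, which is therefore marked inner anodyne, hence a $\CsSS$-equivalence.

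Finally, to see that the map is one of marked complete semi-Segal spaces, one must check $W^K,W^J\in\CsSS$. For $j\colon A\to B$ in $J_{JL}$, orthogonality of $W^K$ to $j$ translates via the adjunction into the assertion that $\Map(K\otimes B,W)\to \Map(K\otimes A,W)$ is an equivalence, which is immediate from the stability claim (with $T=K$) together with $W\in\CsSS$; the same argument handles $W^J$. The only mildly subtle point in the whole proof is noticing that the generator $S_{2/6}$, which is not itself a horn inclusion, nevertheless belongs to the class of marked inner anodynes, so that \cref{lem:marked_inner_anodyne_stability} applies uniformly to all of $J_{JL}$ rather than only to the admissible horn inclusions.
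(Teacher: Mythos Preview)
Your argument is correct and follows essentially the same route as the paper: both use \cref{lem:marked_inner_anodyne_stability} together with a saturation argument, with the only difference being that you package the key step as the assertion that $T\otimes-$ preserves $\CsSS$-equivalences, while the paper dualizes and checks directly that $W^K\to W^J$ is orthogonal to the relevant cofibrations. Your closing remark about $S_{2/6}$ is harmless but unnecessary: membership in $J_{JL}$ already makes $S_{2/6}$ a $J_{JL}$-cofibration by definition, so there is nothing subtle there.
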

	\begin{proof}
		It follows by adjunction from \cref{lem:marked_inner_anodyne_stability} that $W^{K}\to W^{J}$ is orthogonal to $J_{JL}$ whenever $J\to K$ is a marked cofibration between levelwise discrete objects. Applying this to $\emptyset \to J$ implies that $W^{J}$ is a complete marked semi-Segal space whenever $J$ is levelwise discrete. Because $\CsSS$ is closed under limits, and the levelwise discrete objects generate $ss\cS_{+}$ under colimits, and $W^{(-)}$ takes colimits to limits, this shows that any $W^{J}$ is a marked complete semi-Segal space.
		
		Similarly, when $J\to K \in J_{JL}$, it follows by adjunction from \cref{lem:marked_inner_anodyne_stability} that $W^{K}\to W^{J}$ is orthogonal to all marked cofibrations between levelwise discrete objects. Applying this at $\emptyset \to A$ gives the equivalence
		\begin{align*}
			W^{K}(A) \simeq W^{J}(A) \, .
		\end{align*}
		Now the class of maps $J\to K$ such that $W^{K} \to W^{J}$ is an equivalence is strongly saturated and contains $J_{JL}$, and therefore all $\CsSS$-equivalences.
	\end{proof}
	
	Recall that $\cL \colon ss\cS \to s\cS$ is given by left Kan extension along $\Simp_{s} \to \Simp$, while $\cR_{+} \colon ss\cS_{+} \to s\cS$ is the right adjoint to the marked forgetful functor \cref{eq:marked_forgetful} constructed in \cite{harpaz2015quasi}.
	\begin{cor}\label{cor:RT(X)^J}
		If $J$ is a semi-simplicial set and $X$ is a marked inner Kan space, then there is an equivalence
		\begin{align*}
			\cR_{+}\cT\left(X^{J^{\flat}} \right) \to \left(\cR_{+} \cT(X)\right)^{\cL(J)}
		\end{align*}
		of complete Segal spaces. This equivalence is moreover natural in both $X$ and $J$.
	\end{cor}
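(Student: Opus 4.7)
The plan is to combine \cref{cor:T(X)^J} with the adjunction $\cF_{+} \dashv \cR_{+}$ and reduce to a comparison of marked tensor products. Applying \cref{cor:T(X)^J} with the marking $J^{\flat}$ yields a natural equivalence $\cT(X^{J^{\flat}}) \simeq \cT(X)^{J^{\flat}}$ in $\CsSS$; applying $\cR_{+}$ to both sides reduces the claim to producing a natural equivalence
\begin{align*}
\cR_{+}(W^{J^{\flat}}) \simeq (\cR_{+}W)^{\cL(J)}
\end{align*}
for every complete semi-Segal space $W$ (which we will apply to $W = \cT(X)$).

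Testing at a simplicial set $K$, the adjunctions $\cF_{+} \dashv \cR_{+}$ and $- \otimes J^{\flat} \dashv (-)^{J^{\flat}}$ identify the two sides with $\Map_{ss\cS_{+}}(\cF_{+}(K) \otimes J^{\flat}, W)$ and $\Map_{ss\cS_{+}}(\cF_{+}(K \times \cL(J)), W)$, respectively. I would construct a natural comparison map $\cF_{+}(K) \otimes J^{\flat} \to \cF_{+}(K \times \cL(J))$ from the counit $\cL\cF(K) \to K$, the unit $J \to \cF\cL(J)$, and the monoidality $\cL(- \otimes -) \simeq \cL(-) \times \cL(-)$, checking that markings are preserved because the only marks on the left are $s_{0}$-degenerate edges from the $\cF_{+}(K)$ factor, which land in the $s_{0}$-marking on the right. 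Since $W$ is a complete semi-Segal space, $\Map_{ss\cS_{+}}(-, W)$ inverts $\CsSS$-equivalences, so it suffices to verify the comparison map is a $\CsSS$-equivalence.

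For this, factor the comparison through $\cF_{+}(K) \otimes \cF_{+}(\cL(J))$. The map $J^{\flat} \to \cF_{+}(\cL(J))$ is a $\CsSS$-equivalence, since under $\CsSS \simeq \Cat_{\infty}$ both objects present the $\infty$-category modelled by $\cL(J)$, and tensoring with the (cofibrant) object $\cF_{+}(K)$ preserves $\CsSS$-equivalences by a pushout-product argument extending \cref{lem:marked_inner_anodyne_stability}. The residual inclusion $\cF_{+}(K) \otimes \cF_{+}(\cL(J)) \to \cF_{+}(K \times \cL(J))$ freely adjoins the ``cross-degenerate'' prism cells; because the $s_{0}$-marked edges on each factor are equivalences in any complete semi-Segal target, these cells can be filled by iterated marked inner anodyne extensions, making the inclusion a $\CsSS$-equivalence. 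Naturality in $X$ and $J$ follows from naturality at each step.

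The main obstacle is the last step: showing that $J^{\flat} \to \cF_{+}(\cL(J))$ is a $\CsSS$-equivalence and that the cross-degenerate inclusion is one too—equivalently, that $\cF_{+}$ is monoidal up to $\CsSS$-equivalence. Everything else is formal manipulation of adjunctions and the universal property of $\cT$ established in \cref{cor:T_characterization}.
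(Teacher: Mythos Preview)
Your proposal is correct and follows essentially the same route as the paper: apply \cref{cor:T(X)^J}, then reduce to the two facts that $J^{\flat} \to \cF_{+}\cL(J)$ is a $\CsSS$-equivalence and that $\cF_{+}$ is monoidal up to $\CsSS$-equivalence. These two ``obstacles'' you identify are exactly Lemmas~3.4.1 and~3.4.2 of \cite{harpaz2015quasi}, which the paper simply cites rather than re-deriving; the only organizational difference is that the paper uses \cref{lem:CsSS_exponentiation} to pass from $\cT(X)^{J^{\flat}}$ to $\cT(X)^{\cF_{+}\cL(J)}$ before applying $\cR_{+}$, whereas you apply $\cR_{+}$ first and unwind the adjunctions by testing against $K$, which amounts to the same computation.
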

	\begin{proof}
		Since $X$ is a marked inner Kan space, $\cT(X)$ is a complete semi-Segal space. By \cite{harpaz2015quasi} Lemma 3.4.1, there are natural $\CsSS$-equivalences $J^{\flat} \to \cF_{+} \cL(J)$, so combining \cref{cor:T(X)^J,lem:CsSS_exponentiation} we have equivalences
		\begin{align}\label{eq:RT(X)^J_2}
			\cT(X^{J^{\flat}}) \simeq \cT(X)^{J^{\flat}} \simeq \cT(X)^{\cF_{+}\cL(J)} \, .
		\end{align}
		By \cite{harpaz2015quasi} Lemma 3.4.2, the functor $\cF_{+}$ has a lax monoidal structure where each
		\begin{align*}
			\cF_{+}(Y)\otimes \cF_{+}(Z) \to \cF_{+}(Y\times Z)
		\end{align*}
		is a $\CsSS$ equivalence. Fixing $Z$, we get an induced natural transformation on right adjoints
		\begin{align}\label{eq:RT(X)^J_3}
			\cR_{+}(W)^{Z} \to \cR_{+}(W^{\cF_{+}Z}) \, ,
		\end{align}
		which is an equivalence whenever $W$ is a complete marked semi-Segal space.
		We can therefore apply $\cR_{+}$ to \cref{eq:RT(X)^J_2}, and further compose
		\begin{align*}
			\cR_{+}\cT(J^{\flat}) \simeq \cR_{+}\left(\cT(X)^{\cF_{+}\cL(J)}\right) \simeq \cR_{+}\cT(X)^{\cL(J)},
		\end{align*}
		where the last equivalence is \cref{eq:RT(X)^J_3} for $W=\cT(X)$ and $Z=\cL(J)$.
	\end{proof}
	
	\begin{lem}\label{lem:join_comparison}
		For $J$ and $K$ simplicial sets, there are natural maps
		\begin{align}\label{eq:join_comparison}
			J \altjoin K \to J\join K
		\end{align}
		that are $\CSS$-equivalences. In other words, any complete Segal space is orthogonal to \cref{eq:join_comparison}.
	\end{lem}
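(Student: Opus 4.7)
The plan is threefold: construct the natural map, reduce to standard simplices, and handle the base case.

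First, I would construct the map using the universal property of the pushout defining $J\altjoin K$. The two legs $J\to J\join K$ and $K\to J\join K$ come from the unit/augmentation of the Day-convolution structure of $\join$. For the middle leg, I define a map $J\times K\times \Delta^{1}\to J\join K$ explicitly on an $r$-simplex $(\sigma,\tau,f)$, where $f\colon [r]\to[1]$ is an order-preserving function sending $\{0,\dots,i-1\}$ to $0$ and $\{i,\dots,r\}$ to $1$, by assembling the restrictions $\sigma|_{[0,i-1]}\in J_{i-1}$ and $\tau|_{[i,r]}\in K_{r-i}$ into the corresponding $r$-simplex of $J\join K$ via the formula \cref{eq:join_formula}. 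Simpliciality and agreement with the restrictions along $\Delta^{\{0\}}$ and $\Delta^{\{1\}}$ are straightforward to verify, yielding the desired natural map.

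Second, I would reduce to the case $J=\Delta^{n}$ and $K=\Delta^{m}$. For fixed $K$, the functor $-\altjoin K\colon s\cS\to s\cS_{K/}$ preserves colimits (directly from the pushout description, since $\timest$ and pushouts commute with colimits in each argument), and the same holds for $-\join K$ (from the Day-convolution formula \cref{eq:join_formula}, noting that on $s\cS_{K/}$ the constant term $K$ becomes the initial object). The class of $\CSS$-equivalences is strongly saturated, hence closed under colimits in the arrow category. Applying this to the Yoneda presentation $J\simeq \colim_{\Delta^{n}\to J}\Delta^{n}$, and then symmetrically in $K$, it suffices to show that $\Delta^{n}\altjoin \Delta^{m}\to \Delta^{n}\join \Delta^{m}=\Delta^{n+m+1}$ is a $\CSS$-equivalence for all $n,m\geq 0$.

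Third, I would handle the base case, which I expect to be the main obstacle. Evaluating a complete Segal space $W$ on the pushout yields
\[
	\Map(\Delta^{n}\altjoin\Delta^{m},W)\simeq W_{n}\timest_{W(\Delta^{n}\timest\Delta^{m})}W(\Delta^{n}\timest\Delta^{m}\timest\Delta^{1})\timest_{W(\Delta^{n}\timest\Delta^{m})}W_{m},
\]
and the target of the natural map computes to $W_{n+m+1}$. One can either attack this directly, using the Segal and completeness conditions to rewrite the middle term via the shuffle triangulation of the prism $\Delta^{n}\timest\Delta^{m}\timest\Delta^{1}$ and match the resulting tower of pullbacks over $W_{0}$ with the Segal decomposition of $W_{n+m+1}$; or one can invoke the classical analogue \cite[Proposition 4.2.1.2]{HTT}, which shows the corresponding map is a categorical equivalence of simplicial sets, and transfer via the Joyal--Tierney equivalence between the Joyal model structure on $s\Set$ and the presentation of $\Cat_{\infty}$ by $\CSS$. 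The combinatorial bookkeeping required to make the direct route rigorous is the step I would expect to consume the bulk of the argument, so in practice I would proceed through the second route.
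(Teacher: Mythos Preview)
Your proposal is correct and aligns with the paper's approach: both reduce to simplices via colimit preservation and strong saturation, then defer the base case to \cite[4.2.1.2]{HTT}. The only difference is in how the citation is used. You invoke the \emph{statement} of \cite[4.2.1.2]{HTT} (that the map is a categorical equivalence) and then transfer across the Joyal--Tierney comparison; the paper instead invokes the \emph{proof} of that result, extracting the fact that the comparison map in the base case is built from inner anodyne maps, and then observes directly that inner anodyne maps are $\CSS$-equivalences and that $\CSS$-equivalences are closed under colimits. The paper's route is marginally cleaner because ``inner anodyne $\Rightarrow$ $\CSS$-equivalence'' is immediate from the definition of the Segal condition, whereas your transfer step requires knowing that Joyal weak equivalences of simplicial sets coincide with $\CSS$-equivalences under the comparison---true, but a slightly heavier fact to cite.
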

	\begin{proof}
		This follows from the proof of \cite{HTT} Lemma 4.2.1.2, with the key point being that inner anodyne morphisms are $\CSS$-equivalences, and the collection of $\CSS$-equivalences is closed under colimits.
	\end{proof}
	Let $J$ be an unmarked semi-simplicial set and $W$ a complete semi-Segal space. Through the chain of maps
	\begin{align*}
		\Map(J^{\flat}, W) \xleftarrow{\simeq} \Map(\cF_{+}\cL(J) ,W) \simeq \Map(\cL(J), \cR_{+}W) \, ,
	\end{align*}
	any  $p\colon J^{\flat} \to W$, determines a map $\overline{p}\colon \cL(J)\to \cR_{+}W$.
	\begin{cor}\label{cor:R_slice_comparison}
		In the above situation, we have a chain of natural equivalences
		\begin{align*}
			\cR_{+}(W_{p/}) \simeq  \cR_{+}(W)_{\overline{p}/} \simeq \cR_{+}(W)^{\tilde{p}/} \, ,
		\end{align*}
		compatible with projection to $\cR_{+}(W)$. 
	\end{cor}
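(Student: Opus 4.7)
The plan is as follows. The second equivalence $\cR_{+}(W)_{\overline{p}/} \simeq \cR_{+}(W)^{\tilde{p}/}$ is the more straightforward of the two: using the universal properties of the join and alternative join, at each test simplicial space $K$ both sides reduce to pullbacks of $\Map(\cL(J) \join K, \cR_{+}W)$, resp.\ $\Map(\cL(J) \altjoin K, \cR_{+}W)$, over $\Map(\cL(J), \cR_{+}W)$. Since $\cR_{+}W$ is a complete Segal space and $\cL(J) \altjoin K \to \cL(J) \join K$ is a $\CSS$-equivalence by \cref{lem:join_comparison}, the induced mapping spaces agree, and the equivalence of simplicial spaces follows by naturality in $K$.

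For the first equivalence, I would compute levelwise and reduce to a comparison in $ss\cS_{+}$. Using the adjunction $\cF_{+} \dashv \cR_{+}$ together with the universal properties of $W_{p/}$ and $\cR_{+}(W)_{\overline{p}/}$, one obtains
\begin{align*}
(\cR_{+}(W_{p/}))_{n} &\simeq \Map(J^{\flat} \join \cF_{+}(\Delta^{n}), W) \times_{\Map(J^{\flat}, W)} \{p\}, \\
(\cR_{+}(W)_{\overline{p}/})_{n} &\simeq \Map(\cF_{+}(\cL(J) \join \Delta^{n}), W) \times_{\Map(\cF_{+}\cL(J), W)} \{\tilde{p}\}.
\end{align*}
Since $W$ is a complete semi-Segal space, matters reduce to constructing a natural $\CsSS$-equivalence $J^{\flat} \join \cF_{+}(\Delta^{n}) \to \cF_{+}(\cL(J) \join \Delta^{n})$ compatible, under restriction to the $J$-factor, with the $\CsSS$-equivalence $J^{\flat} \to \cF_{+}\cL(J)$ of \cite{harpaz2015quasi} Lemma 3.4.1.

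I would factor the desired comparison as
\begin{align*}
J^{\flat} \join \cF_{+}(\Delta^{n}) \longrightarrow \cF_{+}\cL(J) \join \cF_{+}(\Delta^{n}) \longrightarrow \cF_{+}(\cL(J) \join \Delta^{n}),
\end{align*}
where the first factor is the join of $J^{\flat} \to \cF_{+}\cL(J)$ with the identity and the second is the lax monoidal comparison for $\cF_{+}$ with respect to $\join$, induced by the join-monoidality of $\cL$. The main obstacle will be showing that both factors are $\CsSS$-equivalences. For the first, I would establish a join-analogue of \cref{lem:marked_inner_anodyne_stability}, showing that joining with a cofibrant marked semi-simplicial space preserves the strongly saturated class generated by $J_{JL}$. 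For the second, I would use \cref{eq:join_formula} to identify the comparison on underlying semi-simplicial spaces as an isomorphism; the only discrepancy lies in the marking, consisting of edges of the form $s_{0}(v)$ for $v \in \cL(J)_{0}$, which are invertible in any complete semi-Segal space and hence lie in the smallest marking for which $W$ is local, so including them does not affect the $\CsSS$-localization. Naturality in $n$ assembles the levelwise comparisons into an equivalence of simplicial spaces, and compatibility with the projection to $\cR_{+}(W)$ is built in because every step is obtained by joining with an identity on the $J$-factor.
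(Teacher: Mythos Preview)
Your treatment of the second equivalence via \cref{lem:join_comparison} matches the paper's exactly.

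For the first equivalence your approach is in the same spirit but takes a more circuitous factorization. The paper does not compute $(\cR_{+}(W_{p/}))_{n}$ directly; instead it compares the \emph{marked semi-simplicial} spaces $W_{p/}$ and $\cF_{+}\bigl(\cR_{+}(W)_{\overline{p}/}\bigr)$ at each $\Delta^{n,\flat}_{s}$, via the single chain
\[
(J\join\Delta^{n}_{s})^{\flat} \;=\; J^{\flat}\join\Delta^{n,\flat}_{s}\;\longrightarrow\;\cF_{+}\cL(J\join\Delta^{n}_{s})\;\xrightarrow{\ \simeq\ }\;\cF_{+}\bigl(\cL(J)\join\Delta^{n}\bigr),
\]
where the first map is \cite{harpaz2015quasi} Lemma~3.4.1 applied to the semi-simplicial set $J\join\Delta^{n}_{s}$ as a whole, and the second is the strong $\join$-monoidality of $\cL$. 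This yields $\cF_{+}\bigl(\cR_{+}(W)_{\overline{p}/}\bigr)\simeq W_{p/}$, and the desired statement follows by taking adjuncts through the equivalence $\CSS\simeq\CsSS$ (using the already-established second equivalence to know $\cR_{+}(W)_{\overline{p}/}\in\CSS$). The point is that applying Lemma~3.4.1 once to $J\join\Delta^{n}_{s}$ completely avoids the need for any ``$\join$ preserves $\CsSS$-equivalences'' lemma.

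Your route can be made to work, but two comments. First, the claim that $-\join K$ preserves $\CsSS$-equivalences is not really a join-analogue of \cref{lem:marked_inner_anodyne_stability}: because $-\join K$ does not preserve colimits in $ss\cS_{+}$ (only in the undercategory $(ss\cS_{+})_{K/}$), a naive strong-saturation argument is delicate. The clean justification is instead via the dual of \cref{cor:segal_space_slice}: for any $q\colon K\to W$ the slice $W_{/q}$ is again in $\CsSS$, so $\Map(-\join K,W)$ is fiberwise over $\Map(K,W)$ given by mapping into a complete semi-Segal space. Second, your lax-monoidal comparison $\cF_{+}\cL(J)\join\cF_{+}(\Delta^{n})\to\cF_{+}(\cL(J)\join\Delta^{n})$ is in fact an \emph{isomorphism} of marked semi-simplicial sets: by \cref{eq:join_formula} the underlying objects agree, and the degenerate edges of $\cL(J)\join\Delta^{n}$ all lie in the $\cL(J)_{1}$ or $\Delta^{n}_{1}$ summands, so the markings coincide. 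The discussion of a marking discrepancy is therefore unnecessary.
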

	\begin{proof}
		First note that since the category of complete Segal spaces is closed under forming exponentials and pullbacks, it follows from \cref{lem:alt_undercat_pullback} that $\cR_{+}(W)^{\overline{p}/}$ is also a complete Segal space.
		
		For each $n$, pullback along the $\CSS$-equivalence
		\begin{align*}
			\cL(J) \altjoin \Delta^{n} \to \cL(J) \join \Delta^{n}
		\end{align*}
		determines an equivalence 
		\begin{align}\label{eq:R_slice_comparison}
			(\cR_{+}(W)_{\overline{p}/})(\Delta^{n}) \to (\cR_{+}(W)^{\overline{p}/})(\Delta^{n}) \, .
		\end{align}
		These maps are natural in $\Delta^{n}$ and $J$, so they assemble to an equivalence of simplicial spaces over $\cR_{+}(W)$. 
		
		Similarly, pulling back along the $\CsSS$-equivalences
		\begin{align*}
			J^{\flat}\join\Delta_{s}^{n,\flat} \to \cF_{+}\cL(J\join \Delta_{s}^{n}) \to \cF_{+}(\cL(J) \join\cL(\Delta^{n}_{s})).
		\end{align*}
		induces an equivalence
		\begin{align}\label{eq:R_slice_comparison_2}
			\cF_{+}\left(\cR_{+}(W)_{\overline{p}/} \right) \to W_{p/}
		\end{align}
		covering the counit $\cF_{+}\cR_{+}W \xrightarrow{\sim} W$. Thanks to the equivalence \cref{eq:R_slice_comparison}, $\cR_{+}(W)_{\overline{p}/}$ is a complete Segal space, and by \cite{harpaz2015quasi}, $\cR_{+}$ and $\cF_{+}$ restrict to inverse equivalences between $\CsSS$ and $\CSS$. Therefore the adjunct of \cref{eq:R_slice_comparison_2} is an equivalene $\cR_{+}(W)_{\overline{p}/} \to \cR_{+}(W_{p/})$ covering the identity on $\cR_{+}(W)$.
	\end{proof}
	We now wish to investigate how slices behave with respect to the localization $\cT$. For this purpose, we need yet another kind of fibration.
	\begin{definition}\label{def:marked_left_fibration}
		Let $J_{L}$ be the collection of maps in $ss\cS_{+}$ consisting of $J_{JL}$ and the left horns $\Horn_{0,s}^{n,\flat} \to \Delta^{n, \flat}_{s}$ for $1\leq n$. We call $J_{L}$-fibrations \emph{marked left fibrations}, and $J_{L}$-cofibrations \emph{marked left anodyne}.
		
		Dually, let $J_{R}$ denote the collection of maps consisting of $J_{JL}$ and the right horns $\Horn_{n,s}^{n,\flat} \to \Delta^{n, \flat}_{s}$ for $1\leq n$. We call $J_{R}$-fibrations \emph{marked right fibrations}, and $J_{R}$-cofibrations \emph{marked right anodyne}.
	\end{definition}
	\begin{lem}\label{lem:left_anodyne_stability}
		If $a\colon A\to B$ and $j\colon J\to K$ are marked cofibrations between marked semi-simplicial sets, then the induced map
		\begin{align*}
			f \colon J\join B \underset{J\join A}{\amalga} K\join A \to K\join B
		\end{align*}
		is again a marked cofibration, and:
		\begin{enumerate}
			\item If $a$ is marked left anodyne, then $f$ is marked inner anodyne.
			\item If $j$ is marked left anodyne, then $f$ is also marked left anodyne.
		\end{enumerate}
		The dual statements also hold:
		\begin{enumerate}
			\item If $j$ is marked right anodyne, then $f$ is marked inner anodyne.
			\item If $a$ is marked right anodyne, then $f$ is also marked right anodyne.
		\end{enumerate}
	\end{lem}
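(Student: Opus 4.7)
The plan is to reduce the three claims to their respective generators via the closure properties of weakly saturated classes, and then to run a direct combinatorial case analysis on the Leibniz join.

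Because $\star$ preserves colimits in each variable, the Leibniz construction $(a,j)\mapsto f$ is cocontinuous in each variable. Hence for any weakly saturated class, the collection of pairs $(a,j)$ for which $f$ lies in that class is closed under pushouts, retracts, and transfinite composition in each variable, and it suffices to treat the case where both $a$ and $j$ are generators. For the marked cofibration claim this is immediate: both generators of marked cofibrations are levelwise injective, and the formula $(J\star K)_n\simeq\coprod_{i+j=n-1}J_i\times K_j$ shows that the Leibniz join of two levelwise injections is levelwise injective, hence a marked cofibration by \cref{rem:levelwise_injections}. Among the anodyne cases, every pair in which $S_{2/6}$ or the mark-adding generator $\Delta^{1,\flat}_s\to\Delta^{1,\sharp}_s$ appears collapses to an identity: the underlying unmarked map of the Leibniz join is an identity, and a short check shows that the markings pushed out from the two sides agree with those of the target.

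The substantive cases pair a horn generator with a boundary inclusion $\del\Delta^m_s\to\Delta^m_s$. Identifying $\Delta^m_s\star\Delta^n_s=\Delta^{m+n+1}_s$ and computing face by face, the underlying Leibniz join of a horn $\Horn^n_{i,s}\to\Delta^n_s$ placed on the right of the join with $\del\Delta^m_s\to\Delta^m_s$ on the left is $\Horn^{m+n+1}_{m+1+i,s}\to\Delta^{m+n+1}_s$, while the same generator placed on the left gives $\Horn^{m+n+1}_{i,s}\to\Delta^{m+n+1}_s$. Since $m,i\geq 0$, the shifted index $m+1+i$ is always at least $1$, so any horn placed on the right becomes an inner horn (or a right outer horn, exactly when $i=n$), whereas a horn of index $0$ placed on the left stays left-outer. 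A short marking calculation then shows that in each such case the resulting marked horn is either itself admissible in the sense of \cref{def:admissible_horn}, or is a pushout of an admissible inner horn $(\Horn^{n'}_{i',s},\emptyset)\to(\Delta^{n'}_s,\emptyset)$ along a mark-adding cofibration $(\Horn^{n'}_{i',s},\emptyset)\to(\Horn^{n'}_{i',s},C)$, hence marked inner anodyne; this proves claim (1). Claim (2) follows from the same computation, the one new case being that a flat left horn placed on the left of the join with $\del\Delta^m_s\to\Delta^m_s$ on the right produces the flat left horn $\Horn^{m+n+1,\flat}_{0,s}\to\Delta^{m+n+1,\flat}_s$, which is itself a generator of marked left anodyne.

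The dual statements for marked right anodyne follow by the symmetric computation (formally, by passing to the opposite, which interchanges left and right horns). The main obstacle I expect is the marking bookkeeping in the admissible outer-horn cases: one must verify that the edge carried along by admissibility of the horn generator lands at exactly the right pair of vertices in the join so that the output is again an admissible outer horn, and in cases producing an inner horn with additional marked edges one must recognize the result as a pushout of the corresponding empty-marking admissible inner horn in order to conclude marked inner anodyne.
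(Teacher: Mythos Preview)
Your proposal is correct and follows essentially the same strategy as the paper: reduce both $a$ and $j$ to generators using the cocontinuity of the Leibniz join in each variable, then carry out a case analysis. The paper phrases this as a ``minor modification of \cite{HTT} Lemma 2.1.2.3'' and defers the unmarked horn-versus-boundary combinatorics to that reference, whereas you write out the index-shift formula $\Lambda^{m+n+1}_{m+1+i,s}$ explicitly; your observation that the cases involving $S_{2/6}$ or $\Delta^{1,\flat}_s\to\Delta^{1,\sharp}_s$ collapse to identities is in fact sharper than the paper's description of the $S_{2/6}$ case as a nontrivial re-marking expressible as a pushout of $S_{2/6}$. The one place your write-up is thinner than the paper's is the treatment of the admissible marked outer horns: you flag the marking bookkeeping as ``the main obstacle'' but do not record, for instance, that the marked right horn $(\Horn^n_{n,s},\{n-1,n\})\to(\Delta^n_s,\{n-1,n\})$ placed on the right yields exactly the admissible marked right horn $(\Horn^{m+n+1}_{m+n+1,s},\{m+n,m+n+1\})\to(\Delta^{m+n+1}_s,\{m+n,m+n+1\})$, which the paper does spell out. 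Filling in that check (and the symmetric one for the marked left horn, which lands on an inner horn with an extra marked edge and is therefore a pushout of an admissible inner horn, as you indicate) would make the argument complete.
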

	\begin{proof}
		One can see directly from the formula for $\join$ that $f$ is levelwise injective.
		The rest of the proof is a minor modification of \cite{HTT} Lemma 2.1.2.3. As there, we may reduce to the case where $j\in I_{JL}$. If $j$ is $\Delta^{1,\flat}_{s} \to \Delta^{1,\sharp}_{s}$, the map $f$ becomes an equivalence, so it suffices to treat the boundary inclusions $\del \Delta^{n,\flat}_{s} \to \Delta^{n,\flat}_{s}$. As in \cite{HTT}, we may assume that $a$ is one of the generating left anodynes. When $a$ is $\Horn^{n,\flat}_{i} \to \Delta^{n,\flat}_{s}$, where $0\leq i \leq n$, this works just as in the unmarked case. When $a$ is
		\begin{align*}
			(\Horn^{n}_{n}, \left\{ n-1,n\right\}) \to (\Delta_{s}^{n}, \left\{ n-1,n \right\} )
		\end{align*}
		the map $f$ becomes
		\begin{align*}
			(\Horn_{n+m+1,s}^{n+m+1}, \{n+m,n+m+1\}) \to (\Delta^{n+m+1}_{s}, \left\{ n+m, n+m-1\right\} ) \, ,
		\end{align*}
		which is still marked inner anodyne.
		When $a=S_{2/6}$, the map $f$ becomes a re-marking of $\Delta_{s}^{m+3}$, which can easily be written as a pushout of the original $2/6$ map. This finishes the case where $a$ is marked left anodyne. The other cases can be shown in much the same way.
	\end{proof}
	As in \cite{HTT}, this has the following immediate consequences.
	\begin{cor}\label{cor:left_fibration_stability}
		Let $f\colon X\to Y$ be a marked inner fibration, $j\colon J\to K$ a marked cofibration between marked semi-simplicial sets, and $p\colon K\to X$ any map. Then the induced map
		\begin{align*}
			q\colon X_{p/} \to X_{pj/}\timest_{Y_{fpj/}} Y_{fp/}
		\end{align*}
		is a marked left fibration, and:
		\begin{enumerate}
			\item if $j$ is marked right anodyne, then $q$ is a trivial fibration.
			\item if $f$ is a trivial fibration, so is $q$. 
		\end{enumerate}
	\end{cor}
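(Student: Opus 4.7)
The proof is essentially a formal adjunction argument of the sort that appears in \cite{HTT} Proposition 2.1.2.1, where \cref{lem:left_anodyne_stability} is the heavy lifting. The plan is to translate each lifting problem against $q$ into a pushout-product lifting problem against $f$, and then apply \cref{lem:left_anodyne_stability} to identify the relevant class of maps that $f$ is required to lift against.

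To show that $q$ is a marked left fibration, let $a\colon A\to B$ be a marked left anodyne map and consider a lifting problem
\begin{equation*}
\begin{tikzcd}
A \arrow[r] \arrow[d, "a"'] & X_{p/} \arrow[d, "q"] \\
B \arrow[r] & X_{pj/}\timest_{Y_{fpj/}} Y_{fp/}\, .
\end{tikzcd}
\end{equation*}
Unwinding the definitions of the slices as the values of the right adjoints to $J\join -$ and $K\join -$, a commuting square of this form is the same data as a commuting square
\begin{equation*}
\begin{tikzcd}
(J\join B)\amalga_{J\join A}(K\join A) \arrow[r] \arrow[d] & X \arrow[d, "f"] \\
K\join B \arrow[r] & Y
\end{tikzcd}
\end{equation*}
under $K \xrightarrow{p} X$, and lifts of the slice square correspond bijectively to lifts of the join square. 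By \cref{lem:left_anodyne_stability}, the left-hand map of the join square is marked inner anodyne whenever $a$ is marked left anodyne and $j$ is a marked cofibration, so the lift exists because $f$ is a marked inner fibration. This proves $q$ is a marked left fibration.

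For (1), the same adjunction translation reduces showing that $q$ is a trivial fibration to lifting $f$ against the pushout-product of $a$ and $j$, where now $a$ is allowed to be an arbitrary marked cofibration. By the dual (right anodyne) case of \cref{lem:left_anodyne_stability}, the pushout-product is again marked inner anodyne when $j$ is marked right anodyne and $a$ is any marked cofibration, so the lift exists. For (2), the translation is the same, but now $a$ and $j$ are arbitrary marked cofibrations, so \cref{lem:left_anodyne_stability} only gives that the pushout-product is a marked cofibration; the lift nevertheless exists because $f$ is a trivial fibration.

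The only potentially subtle point, which is not really an obstacle, is bookkeeping the over-$K$ structure through the adjunction so that the slice fiber and the join data line up correctly; once this is set up the argument is entirely formal. Everything is already contained in \cref{lem:left_anodyne_stability}, which does the real combinatorial work.
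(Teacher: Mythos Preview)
Your proof is correct and follows exactly the same approach as the paper's: translate the lifting problem for $q$ via the join--slice adjunction into a lifting problem for $f$ against the pushout-join, and then invoke \cref{lem:left_anodyne_stability} case by case. The paper's proof is simply a terser version of what you wrote.
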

	\begin{proof}
		By adjunction, the lifting problem
		\begin{equation*}
			\begin{tikzcd}
				A \arrow[r] \arrow[d]      &   X_{p/}  \arrow[d] \\
				B  \arrow[r] \arrow[ur,dashed]              &   X_{pj/} \timest_{Y_{fpj/}} Y_{fp/}
			\end{tikzcd}
		\end{equation*}
		is equivalent to the lifting problem
		\begin{equation*}
			\begin{tikzcd}
				J\join B \underset{J\join A}{\amalga} K\join A  \arrow[r] \arrow[d]      &   X  \arrow[d] \\
				K\join B  \arrow[r]    \arrow[ur, dashed]            &   Y\, .
			\end{tikzcd}
		\end{equation*}
		Now the result follows from \cref{lem:left_anodyne_stability}.
	\end{proof}
	\begin{cor}\label{cor:segal_space_slice}
		If $W$ is a marked complete semi-Segal space, and $p\colon J\to W$ a map from a marked semi-simplicial set, then $W_{p/}\to W$ is orthogonal to $J_{JL}$. In particular, $W_{p/}$ is also a complete semi-Segal space.
	\end{cor}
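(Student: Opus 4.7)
The plan is to reduce the lifting problem defining orthogonality to a problem about the join, and then apply \cref{lem:left_anodyne_stability}. Given any marked cofibration $a\colon A\to B$ in $J_{JL}$, the data of a square
\begin{equation*}
\begin{tikzcd}
A \arrow[r] \arrow[d, "a"'] & W_{p/} \arrow[d] \\
B \arrow[r] & W
\end{tikzcd}
\end{equation*}
unwinds, via the defining adjunction of $W_{p/}$, to the data of a map $J\join A \to W$ (restricting to $p$ on $J$) together with a map $B\to W$ agreeing with the restriction to $A$. This is exactly the data of a map out of the pushout
\begin{equation*}
J\join A \underset{A}{\amalga} B \longrightarrow W,
\end{equation*}
and a lift $B\to W_{p/}$ corresponds to an extension along the inclusion $J\join A \amalg_{A} B \to J\join B$.

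Next I would identify this inclusion as the map produced by \cref{lem:left_anodyne_stability} applied to $j\colon \emptyset \to J$ and $a\colon A\to B$. Since the semi-simplicial object $\emptyset$ is the monoidal unit for $\join$ (see \cref{def:augmented} and the discussion following \cref{eq:join_formula}), we have $\emptyset \join K = K$, so the map
\begin{equation*}
\emptyset\join B \underset{\emptyset \join A}{\amalga} J\join A \longrightarrow J\join B
\end{equation*}
of \cref{lem:left_anodyne_stability} agrees with $J\join A \amalg_{A} B \to J\join B$. Because every map in $J_{JL}$ is marked inner anodyne and hence marked left anodyne, clause (1) of \cref{lem:left_anodyne_stability} applies and shows that this inclusion is marked inner anodyne.

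Since $W$ is a marked complete semi-Segal space, $W\to T^{\sharp}$ is orthogonal to $J_{JL}$, and hence (by the weakly saturated closure property of right orthogonals, \cref{rem:closure_of_fibrations}) to all marked inner anodyne maps. Therefore the extension along $J\join A \amalg_A B \to J\join B$ to $W$ exists and is unique up to homotopy, yielding the desired orthogonality of $W_{p/} \to W$ with respect to $a$. For the final claim, the terminal map $W_{p/} \to T^{\sharp}$ factors as $W_{p/}\to W \to T^{\sharp}$; both factors are orthogonal to $J_{JL}$, and the class of maps orthogonal to $J_{JL}$ is closed under composition, so $W_{p/}$ is a complete semi-Segal space. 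I expect no serious obstacle here: the entire argument is a routine join-slice adjunction calculation, with the only real content packaged in \cref{lem:left_anodyne_stability}.
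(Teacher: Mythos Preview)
Your proof is correct and follows essentially the same route as the paper's own argument: both reduce the orthogonality problem, via the join--slice adjunction, to the statement that $J\join A \amalg_{A} B \to J\join B$ is marked inner anodyne, and both obtain this from \cref{lem:left_anodyne_stability} applied with $j\colon \emptyset \to J$. Your write-up is slightly more explicit than the paper's (you spell out that $\emptyset$ is the $\join$-unit and that orthogonality to $J_{JL}$ propagates to all marked inner anodynes), but the content is the same.
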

	\begin{proof}  
		Let $A \to B$ be marked inner anodyne. By adjunction, the orthogonal lifting problem
		\begin{equation*}
			\begin{tikzcd}
				A \arrow[r] \arrow[d]      &   W_{p/}  \arrow[d] \\
				B  \arrow[r]  \arrow[ur, dashed]           &   W
			\end{tikzcd}
		\end{equation*}
		is equivalent to the orthogonal lifting problem
		\begin{equation*}
			\begin{tikzcd}
				J\join A \underset{A}{\amalga} B \arrow[r] \arrow[d]      &   W  \arrow[d] \\
				J\join B  \arrow[r] \arrow[ur, dashed]               &   T^{\sharp} \, .
			\end{tikzcd}
		\end{equation*}
		By \cref{lem:left_anodyne_stability}, the left vertical map is marked inner anodyne.
	\end{proof}

	\begin{cor}\label{cor:inner_kan_slice}
		If $X$ is a marked inner Kan space, then so is $X_{p/}$, and map $X_{p/}\to \cT(X)_{up/}$ induced by the localization unit $u\colon id \to \cT$ factors through an equivalence
		\begin{align*}
			\alpha \colon \cT(X_{p/}) \to \cT(X)_{up/} \, ,
		\end{align*}
		which moreover commutes with projection to $\cT(X)$. 
	\end{cor}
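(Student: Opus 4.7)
The plan is to obtain both claims as consequences of Corollary \ref{cor:left_fibration_stability}, applied twice with the marked cofibration $j\colon \emptyset\to J$, where $\emptyset$ is the empty semi-simplicial space. The key identification is that $\emptyset$ is the unit for $\join$ on $ss\cS$ inherited from $ss\cS_{a}$, so that for any $f\colon X\to Y$ the pullback $X_{\emptyset/}\times_{Y_{f\circ\emptyset/}} Y_{fp/}$ simplifies to $X\times_{Y} Y_{fp/}$.

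For the first assertion, take $f\colon X\to T^{\sharp}$, which is a marked inner fibration since $X$ is a marked inner Kan space. Corollary \ref{cor:left_fibration_stability} then gives that the projection $X_{p/}\to X\times_{T^{\sharp}} T^{\sharp}=X$ is a marked left fibration and hence in particular a marked inner fibration. Composing with $X\to T^{\sharp}$ shows that $X_{p/}$ is a marked inner Kan space.

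For the equivalence, take $f=u_X\colon X\to \cT(X)$, which is a trivial fibration by Lemma \ref{lem:u_is_trivial_fib}. Part (2) of Corollary \ref{cor:left_fibration_stability} then yields a trivial fibration $X_{p/}\to X\times_{\cT(X)}\cT(X)_{up/}$. The projection onto $\cT(X)_{up/}$ is the base change of $u_X$, so is itself a trivial fibration by Remark \ref{rem:closure_of_fibrations}. Composing shows that the natural map $X_{p/}\to \cT(X)_{up/}$ is a trivial fibration. By Corollary \ref{cor:segal_space_slice} the object $\cT(X)_{up/}$ lies in $\CsSS$, so Corollary \ref{cor:T_characterization} produces the unique factorization $\alpha\colon \cT(X_{p/})\to \cT(X)_{up/}$ of $X_{p/}\to \cT(X)_{up/}$ through the unit $u_{X_{p/}}$. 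Both $u_{X_{p/}}$ and $X_{p/}\to \cT(X)_{up/}$ are trivial fibrations, so Lemma \ref{lem:trivial_fib_cancellation} forces $\alpha$ to be a trivial fibration as well. Since both source and target lie in $\CsSS$, Lemma \ref{lem:trivial_fibration=>eq} promotes $\alpha$ to an actual equivalence, and compatibility with the projection to $\cT(X)$ is automatic from the construction.

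The main point requiring care is the bookkeeping of the slice at the empty object, as well as checking that the composite $X_{p/}\to X\times_{\cT(X)}\cT(X)_{up/}\to \cT(X)_{up/}$ produced by Corollary \ref{cor:left_fibration_stability} really coincides with the map induced by functoriality of the slice; beyond these verifications the proof is a mechanical application of already-established stability and cancellation results.
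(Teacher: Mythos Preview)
Your proposal is correct and follows essentially the same route as the paper: both arguments use \cref{cor:left_fibration_stability} together with the trivial fibration $u_X$ to show $X_{p/}\to \cT(X)_{up/}$ is a trivial fibration, invoke \cref{cor:segal_space_slice} to place the target in $\CsSS$, and then factor through the localization unit. The only cosmetic differences are that the paper deduces $X_{p/}$ is marked inner Kan from this trivial fibration rather than separately via $X_{p/}\to X$, and that it concludes $\alpha$ is an equivalence by $2$-out-of-$3$ for $\CsSS$-equivalences rather than via \cref{lem:trivial_fib_cancellation}; your ``automatic from the construction'' for the compatibility with projection is spelled out in the paper as the naturality-of-$u$ plus uniqueness-of-factorization diagram chase, which you should include for completeness.
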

	\begin{proof}
		By \cref{cor:left_fibration_stability} the map $u_{*}\colon X_{p/} \to \cT(X)_{up/}$ induced by $u$ is a trivial fibration. The target is a complete semi-Segal space by \cref{cor:segal_space_slice}, and therefore also a marked inner Kan space, so $X_{p/}$ is also a marked inner Kan space. By \cref{cor:T_is_L}, $X_{p/} \to \cT(X)_{up/}$ factors through $u_{X_{p/}}$ by a map $\alpha \colon \cT(X_{p/}) \to \cT(X)_{p/}$. Since $u_{*}$ and $u_{X_{p/}}$ are trivial fibrations, and therefore $\CsSS$-equivalences by \cref{lem:trivial_fibration=>eq}, $\alpha$ is also a $\CsSS$-equivalence by 2-out-of-3. Since both the source and target of $\alpha$ are in $\CsSS$, it is also a levelwise equivalence.
		
		To see that $\alpha$ is compatible with projection to $\cT(X)$, consider the following diagram.
		\begin{equation*}
			\begin{tikzcd}
				X_{p/} \arrow[r, "u"] \arrow[d]      &   \cT(X_{p}/)  \arrow[d]  \arrow[r, "\alpha"] & \cT(X)_{up/} \arrow[d] \\
				X  \arrow[r, "u"]               &   \cT(X)  \arrow[r, "id"]            & \cT(X)
			\end{tikzcd}
		\end{equation*}
		The left square commutes by naturality of $u$, the outer square commutes by naturality of slices. Therefore, the right square commutes by the uniqueness part of the universal property of the $\CsSS$ localization.
	\end{proof}
	By combining \cref{cor:inner_kan_slice} and \cref{cor:R_slice_comparison}, we immediately get the following. 
	\begin{cor}\label{cor:slice_comparison}
		If $X$ is a marked inner Kan space, $J$ a semi-simplicial set, and $p\colon J^{\flat}\to X$ a map, there is a natural equivalence of complete Segal spaces
		\begin{align*}
			\cR_{+}\cT(X_{p/}) \simeq (\cR_{+}\cT(X))^{\overline{up}/} \, .
		\end{align*}
		This equivalence commutes with the forgetful maps to $\cR_{+}\cT(X)$.
	\end{cor}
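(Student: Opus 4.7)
The plan is to combine the two named results essentially as a chain of natural equivalences. This is indicated by the paper itself (``we immediately get the following''), so the proposal is really to spell out the composite.

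First, I would invoke \cref{cor:inner_kan_slice} to produce an equivalence
\begin{align*}
    \cT(X_{p/}) \xrightarrow{\sim} \cT(X)_{up/}
\end{align*}
of marked complete semi-Segal spaces over $\cT(X)$, where $up\colon J^{\flat} \to \cT(X)$ is the composite of $p$ with the localization unit. Both sides lie in $\CsSS$ (the target because of \cref{cor:segal_space_slice}, the source because $X_{p/}$ is marked inner Kan by \cref{cor:inner_kan_slice} and $\cT$ lands in $\CsSS$), and the equivalence is asserted to be compatible with projection to $\cT(X)$.

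Next I would apply the right adjoint $\cR_{+}$ to obtain a natural equivalence
\begin{align*}
    \cR_{+}\cT(X_{p/}) \xrightarrow{\sim} \cR_{+}\bigl(\cT(X)_{up/}\bigr),
\end{align*}
compatible with projection to $\cR_{+}\cT(X)$ since $\cR_{+}$ preserves this compatibility. Then I would apply \cref{cor:R_slice_comparison} with $W = \cT(X)$ (a marked complete semi-Segal space) and with the map $up\colon J^{\flat} \to \cT(X)$; that corollary gives a natural chain of equivalences
\begin{align*}
    \cR_{+}\bigl(\cT(X)_{up/}\bigr) \simeq \cR_{+}\cT(X)_{\overline{up}/} \simeq \cR_{+}\cT(X)^{\overline{up}/},
\end{align*}
compatible with projection to $\cR_{+}\cT(X)$. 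Composing this with the equivalence from the previous step yields the desired natural equivalence
\begin{align*}
    \cR_{+}\cT(X_{p/}) \simeq \cR_{+}\cT(X)^{\overline{up}/}.
\end{align*}

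Since every step has been verified to commute with the projection to $\cR_{+}\cT(X)$, the composite does as well. There is no real obstacle here: the only thing to check is that the hypotheses of \cref{cor:R_slice_comparison} apply, namely that $\cT(X)$ is a complete semi-Segal space (which holds by \cref{cor:T_is_L}) and that $up\colon J^{\flat} \to \cT(X)$ is a map from the flat marking of a semi-simplicial set (which is how $u$ is built, since $u_{X}$ is a map of marked semi-simplicial spaces out of $X$, whose restriction along $p\colon J^{\flat}\to X$ gives $up\colon J^{\flat}\to \cT(X)$). Naturality in $X$, $J$, and $p$ follows from the naturality built into each of the two inputs.
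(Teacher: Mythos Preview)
Your proposal is correct and follows exactly the route the paper intends: the statement is prefaced by ``By combining \cref{cor:inner_kan_slice} and \cref{cor:R_slice_comparison}, we immediately get the following,'' and you have simply unpacked that composite, first applying \cref{cor:inner_kan_slice} to get $\cT(X_{p/})\simeq \cT(X)_{up/}$ over $\cT(X)$, then applying $\cR_{+}$ and invoking \cref{cor:R_slice_comparison} with $W=\cT(X)$. The hypothesis checks (that $\cT(X)\in\CsSS$ and that $up$ has domain $J^{\flat}$) are correct, and the compatibility with projection is tracked through each step as required.
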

	We end this section by applying the theory of slices to compute mapping spaces in $\cR_{+}\cT(X)$.
	
	\begin{lem}\label{lem:left_fib_over_point}
		If $X\to T^{\sharp}$ is a marked left fibration, then $X=Y^{\sharp}$ for some semi-Kan space $Y$.
	\end{lem}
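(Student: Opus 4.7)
The plan is to show that $Y := U(X)$ is a semi-Kan space and that every edge of $X$ is marked. Both conclusions follow once we establish that every edge of $X$ is an equivalence in the sense of \cref{def:equivalence_in_inner_Kan}. Indeed, since $J_{JL}\subset J_L$, the hypothesis makes $X$ in particular a marked inner Kan space; by the characterization of marked inner Kan spaces to be developed later in the paper (via the comparison with quasi-unital inner Kan spaces), the marking then coincides with the collection of equivalences. Showing that every edge is an equivalence therefore gives $X=U(X)^{\sharp}$; and once every edge is marked, the admissible right outer horn inclusions from $J_{JL}$ become unmarked right outer horn inclusions for $U(X)$, which together with the inner and left outer horn fillers already available from $J_L$-fibrancy will make $U(X)$ a semi-Kan space.

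To prove that every edge $e\colon a\to b$ of $X$ is an equivalence, I would exploit the unmarked left horn fillers $\Horn^{n,\flat}_{0,s}\to \Delta^{n,\flat}_s$ from $J_L\setminus J_{JL}$. First, the 2-dimensional filler applied to the horn with $\{0,1\}=\{0,2\}=e$ produces a self-edge $u_b\colon b\to b$ together with a 2-simplex witnessing $u_b\sim $ ``$e$ composed with itself on one side''. Iterating with higher-dimensional left horns $\Horn^{n,\flat}_{0,s}$ whose diagonal edges are all set to $e$ yields coherent data exhibiting $u_b$ as an idempotent self-equivalence at $b$; an analogous construction using an outgoing edge of $a$ (produced by the $n=1$ left horn filler) yields a corresponding idempotent $u_a$. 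One then combines these coherences with the $S_{2/6}$-lifting, applied to suitable 3-simplices built from $e$, $u_a$, $u_b$ and their iterated compositions (available from the inner horn fillers), to produce both a left and a right inverse for $e$ up to coherent homotopy, i.e., the data required for $e$ to be an equivalence.

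The main obstacle is assembling these coherences without access to degeneracy maps: every identity-like edge and every homotopy witnessing the unit laws must be manufactured by hand from the left horn fillers, and verifying that the resulting data satisfies the conditions of \cref{def:equivalence_in_inner_Kan} requires careful bookkeeping. The argument is a marked semi-simplicial analogue of the classical statement that a quasicategory with all left horn fillers is a Kan complex (cf.\ \cite{HTT} Proposition 1.2.5.1 and 2.1.4.6), with the role of $S_{2/6}$ being to promote the pointwise pre-inverses constructed from left horn fillers into genuine two-sided inverses compatible with the marking.
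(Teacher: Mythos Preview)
Your overall goal---show that every edge of $X$ is marked, and deduce that $U(X)$ is semi-Kan---matches the paper's, but the route you take has a real gap and an unnecessary detour.

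The gap is in your use of $S_{2/6}$. To apply the $S_{2/6}$-lifting you need the edges in positions $\{0,2\}$ and $\{1,3\}$ of your $3$-simplex to be \emph{marked}. Your candidate units $u_a,u_b$ are produced by filling \emph{unmarked} left horns $\Horn^{2,\flat}_{0,s}\to\Delta^{2,\flat}_s$, so there is no reason for them to be marked, and $S_{2/6}$ cannot be invoked. You try to patch this by first showing $u_a,u_b$ (and then $e$) are equivalences in the sense of \cref{def:equivalence_in_inner_Kan} and then appealing to the later result that marking equals equivalences. But that later result lives in Section~6, while the present lemma is in Section~4 and is used there; even if no logical circularity arises, the argument as written depends on two forward references (the marking--equivalence comparison, and implicitly the ``inverse implies equivalence'' direction of \cref{lem:eqs_iff_invertible}) that have not yet been established. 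Moreover, showing directly that $u_b$ is an equivalence in the sense of \cref{def:equivalence_in_inner_Kan} means filling \emph{right} outer horns with last edge $u_b$, and you have no right horn fillers available for unmarked edges.

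The paper avoids all of this by producing \emph{marked} units from the start: since $\Delta^0_s\to\Delta^{n,\sharp}_s$ is marked inner anodyne, any vertex $x$ extends to a map $T^{\sharp}\to X$, whose image on the unique edge is a marked self-edge $sx$. With marked $sx$ and $sy$ in hand, two unmarked left-horn fillings build a $3$-simplex whose $\{0,2\}$ and $\{1,3\}$ edges are $sx$ and $sy$; now $S_{2/6}$ applies directly and forces $f$ to be marked. No appeal to \cref{def:equivalence_in_inner_Kan} or to Section~6 is needed.
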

	\begin{proof}
		Let $f\colon x\to y$ be any edge in $X$.
		Because $X$ is also an inner Kan space, the points $x$ and $y$ extend to maps $\widetilde{x},\widetilde{y} \colon T^{\sharp} \to X$.
		The image of the unique edge in $T^{\sharp}$ under this map gives us marked edges $sx$ and $sy$ in $X$.
		These should be thought of as choices of unit at $x$ and $y$.
		Now because $X\to T^{\sharp}$ is a left fibration, all horns of the type $\Horn_{0,s}^{2}\to X$ can be filled.
		Doing so for a horn with $f$ and $sx$ produces an edge $g\colon y \to x$, and a 2-simplex witnessing $g\circ f = sx$.
		We again fill a horn with $g$ and $sy$ to produce an edge $f'\colon y\to x$, and a 2-simplex witnessing $f'\circ g = sy$.
		We now have the following ``diagram'' in $X$.
		\begin{equation*}
			\begin{tikzcd}
				x \arrow[r, "f"'] \arrow[rr, bend left, "sx"] & y \arrow[rr, bend right, "sy"'] \arrow[r, "g"] & x \arrow[r, "f' "] & y
			\end{tikzcd}
		\end{equation*}
		The inclusion of the indicated 2-simplices into $\Delta_{s}^{3}$ is inner anodyne, and because $sx$, and $sy$ were marked, the above diagram extends to a $(\Delta^{3}_{s}, S_{2/6}) \to X$.
		Because $X$ is a marked inner Kan space, the diagram further lifts to a map $\Delta_{s}^{3,\sharp}\to X$, which shows that $f,g$, and $f'$ were also marked.
		This shows that all edges in $X$ are marked, and so its underlying semi-simplicial space must be a semi-Kan space.
	\end{proof}
	\begin{lem}\label{lem:T_of_groupoid}
		If $Y$ is a semi-Kan space, then $\cT(Y^{\sharp})$ is the constant semi-simplicial space at $\Vert Y \Vert$ with all edges marked.
	\end{lem}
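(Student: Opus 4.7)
The plan is to show that the localization $\cT$ transfers the property ``all edges marked'' from $Y^{\sharp}$ to $\cT(Y^{\sharp})$, and then to invoke the completeness of $\cT(Y^{\sharp})$ as a complete semi-Segal space to force the underlying semi-simplicial space to be constant at its $0$-th space, which is easily identified with $\Vert Y \Vert$.

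First I would verify that $Y^{\sharp}$ is a marked inner Kan space: since $Y$ is a semi-Kan space it admits fillers for all horns, so in particular for inner ones; with every edge marked, the 2-out-of-6 condition is vacuous; and every edge is an equivalence because horn fillers supply the required one-sided inverses. Hence by \cref{lem:u_is_trivial_fib} and \cref{cor:T_is_L}, the unit $u\colon Y^{\sharp}\to \cT(Y^{\sharp})$ is a trivial fibration whose target lies in $\CsSS$.

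Next I would argue that every edge of $\cT(Y^{\sharp})$ is marked. Trivial fibrations have the right lifting property against $\del \Delta^{1,\flat}_{s} \to \Delta^{1,\flat}_{s}$, so the induced map $Y_{1} \to U(\cT(Y^{\sharp}))_{1}$ is surjective on $\pi_{0}$. Since $u$ is a morphism of marked semi-simplicial spaces, the image of $Y_{1}$ factors through the marking of $\cT(Y^{\sharp})$; combined with the $\pi_{0}$-surjectivity and the fact that a marking is an inclusion of connected components (\cref{rem:marked_fibrant}), every connected component of $U(\cT(Y^{\sharp}))_{1}$ must be marked.

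With every edge of $\cT(Y^{\sharp})$ marked, every $n$-simplex has all its edges marked, so the canonical map $\widetilde{U}(\cT(Y^{\sharp})) \to U(\cT(Y^{\sharp}))$ is an equivalence. The completeness condition on $\cT(Y^{\sharp}) \in \CsSS$ (orthogonality to the marked inner anodyne maps $\Delta^{0}_{s} \to \Delta^{n,\sharp}_{s}$) says that $\widetilde{U}(\cT(Y^{\sharp}))$ is the constant semi-simplicial space at $\cT(Y^{\sharp})_{0}$. Finally, \cref{cor:T(X)(J)} together with $\Delta^{0,\flat}_{s}$ being the unit of $\otimes$ gives $\cT(Y^{\sharp})_{0} \simeq \Vert \widetilde{U}(Y^{\sharp}) \Vert \simeq \Vert Y \Vert$, where the last equivalence uses that marking every edge of $Y^{\sharp}$ identifies $\widetilde{U}(Y^{\sharp})$ with $Y$. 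The only subtle step is the marking-surjectivity argument, which amounts to carefully unpacking the lifting property at $\pi_{0}$; the rest is a direct assembly of earlier results.
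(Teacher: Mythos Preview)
Your proposal is correct and follows essentially the same approach as the paper: establish that all edges of $\cT(Y^{\sharp})$ are marked, then use completeness of $\cT(Y^{\sharp})\in\CsSS$ to force constancy, and identify the $0$-th space via \cref{cor:T(X)(J)}. The only cosmetic difference is that the paper reads off the marking directly from the formula \cref{eq:p!Z_marking}, whereas you package the same surjectivity via the trivial fibration $u$ from \cref{lem:u_is_trivial_fib}; note that the cofibration you want for $\pi_0$-surjectivity on $1$-simplices is $\emptyset \to \Delta^{1,\flat}_{s}$ rather than $\del\Delta^{1,\flat}_{s}\to\Delta^{1,\flat}_{s}$, but this is a harmless slip.
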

	\begin{proof}
		For a semi-simplicial map $\Delta^{m}_{s}\to \Delta^{n}_{s}$, we have a commutative square 
		\begin{equation}\label{eq:T_of_groupoid}
			\begin{tikzcd}
				\cT(Y^{\sharp})(\Delta^{n,\sharp}_{s}) \arrow[r] \arrow[d]      &   \cT(Y^{\sharp})(\Delta^{m,\sharp}_{s})  \arrow[d] \\
				\cT(Y^{\sharp})(\Delta^{n,\flat}_{s})  \arrow[r]               &  \cT(Y^{\sharp})(\Delta^{m,\flat}_{s}) \, .
			\end{tikzcd}
		\end{equation}
		By definition, $\cT(Y^{\sharp})$ is marked by the image of
		\begin{align*}
			Y^{\sharp}(\Delta^{1,\sharp}_{s}) \to Y^{\sharp}(\Delta^{1,\flat}_{s}) \to \Vert \widetilde{U}(Y^{\Delta^{1,\flat}_{s}}) \Vert \, ,
		\end{align*}
		where the first map is an equivalence, and the second is essentially surjective. Hence, all edges of $\cT(Y^{\sharp})$ are marked, so the horizontal maps of \cref{eq:T_of_groupoid} are equivalences. Because $Y$ is semi-kan, $Y^{\sharp}$ is marked inner Kan, so $\cT(Y^{\sharp})$ is a complete semi-Segal space. The upper horizontal map of \cref{eq:T_of_groupoid} is determined by restricting along a marked inner anodyne map, and therefore an equivalence. Hence $\cT(Y^{\sharp})$ is a constant semi-simplicial space, and the zeroeth space is by construction $\Vert \widetilde{U}(Y^{\sharp}) \Vert = \Vert Y \Vert$. 
	\end{proof}

	\begin{cor}\label{lem:left_mapping_space}
		Given two points $x,y\colon \Delta_{s}^{0} \to X$ in a marked inner Kan space $X$, the mapping space $\Map_{\cR_{+}\cT(X)}(x,y)$ between the corresponding objects of the complete Segal space $\cR_{+}\cT(X)$ is equivalent to the realization of the semi-Kan space
		\begin{align}\label{eq: }
			\left(\Hom^{L}_{X}(x,\widetilde{y})\right)_n = \left\{ x\right\} \timest_{X(\Delta_{s}^{0})  }X(\Delta_{s}^{0} \join \Delta_{s}^{n,\flat}) \timest_{X(\Delta_{s}^{n,\flat})} \left\{ \widetilde{y} \right\} \, ,
		\end{align}
		where $\widetilde{y}$ is any extension of $y$ to a map $T^{\sharp}\to X$.
	\end{cor}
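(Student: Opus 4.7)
The plan is to realize $\Vert \Hom_{X}^{L}(x,\widetilde{y}) \Vert$ as (the zeroth space of) the fiber of $\cR_{+}\cT(X_{x/}) \to \cR_{+}\cT(X)$ at $y$, which by \cref{cor:slice_comparison} and \cref{rem:mapping_space_from_slice} is precisely $\Map_{\cR_{+}\cT(X)}(x,y)$. The first step is to identify the pullback $P \coloneq X_{x/} \times_{X} T^{\sharp}$, taken along $\widetilde{y}\colon T^{\sharp} \to X$, with $\Hom_{X}^{L}(x,\widetilde{y})^{\sharp}$. Unwinding the explicit formula for the simplices of the slice, $P_{n}$ is exactly the pullback displayed in the statement. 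Since $X_{x/}\to X$ is a marked left fibration by \cref{cor:left_fibration_stability}, and marked left fibrations are stable under pullback, $P\to T^{\sharp}$ is again a marked left fibration. Applying \cref{lem:left_fib_over_point} then shows $P \simeq Z^{\sharp}$ for some semi-Kan space $Z$, and the identification of underlying semi-simplicial spaces gives $Z \simeq \Hom_{X}^{L}(x,\widetilde{y})$.

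Next, I would apply $\cT$ to this pullback. The hypotheses of \cref{lem:inner_fibration_pullback} are satisfied: $X_{x/}$ is marked inner Kan by \cref{cor:inner_kan_slice}, both $X$ and $T^{\sharp}$ are marked inner Kan, and $X_{x/}\to X$ is in particular a marked inner fibration. Hence
\[ \cT(\Hom_{X}^{L}(x,\widetilde{y})^{\sharp}) \simeq \cT(X_{x/}) \times_{\cT(X)} \cT(T^{\sharp}). \]
By \cref{lem:T_of_groupoid} the left-hand side is the constant marked semi-simplicial space at $\Vert \Hom_{X}^{L}(x,\widetilde{y}) \Vert$ with all edges marked, while $\cT(T^{\sharp}) \simeq T^{\sharp}$. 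Applying the limit-preserving functor $\cR_{+}$, and using that $\cR_{+}$ sends a constant marked semi-simplicial space at $S$ with all edges marked to the constant simplicial space at $S$ (since $\cF_{+}\Delta^{n}$ has connected geometric realization), we obtain an equivalence between the constant simplicial space at $\Vert \Hom_{X}^{L}(x,\widetilde{y}) \Vert$ and $\cR_{+}\cT(X_{x/}) \times_{\cR_{+}\cT(X)} \cR_{+}(T^{\sharp})$.

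Finally, since $\cR_{+}(T^{\sharp}) \simeq \ast$ and the resulting map $\ast \to \cR_{+}\cT(X)$ is the 0-simplex $y$ (under the adjoint correspondence between 0-simplices of $\cR_{+}\cT(X)$ and maps $T^{\sharp} \to \cT(X)$), the right-hand side of the previous equivalence is the fiber of $\cR_{+}\cT(X_{x/}) \to \cR_{+}\cT(X)$ at $y$. By \cref{cor:slice_comparison} this fiber is equivalent to the fiber of $\cR_{+}\cT(X)^{\overline{ux}/} \to \cR_{+}\cT(X)$ at $y$, whose zeroth space is $\Map_{\cR_{+}\cT(X)}(x,y)$ by \cref{rem:mapping_space_from_slice}. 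Taking zeroth spaces in the above equivalence yields the claim. The main technical subtlety lies in the first step: carefully checking that the marking on the slice $X_{x/}$, when pulled back along the fully marked $\widetilde{y}$, produces precisely the marking needed to apply \cref{lem:left_fib_over_point}, and that the underlying semi-simplicial space of $P$ matches the formula in the statement.
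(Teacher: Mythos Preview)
Your proposal is correct and follows essentially the same approach as the paper: form the pullback square of $X_{x/}\to X$ against $\widetilde{y}\colon T^{\sharp}\to X$, identify the fiber as $\Hom^{L}_{X}(x,\widetilde{y})^{\sharp}$, apply $\cT$ via \cref{lem:inner_fibration_pullback}, use \cref{lem:T_of_groupoid} and \cref{cor:slice_comparison}, and then read off the mapping space via \cref{rem:mapping_space_from_slice}. If anything, you are more explicit than the paper in justifying that the fiber is fully marked (invoking \cref{lem:left_fib_over_point} rather than asserting it), which is a virtue.
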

	\begin{proof}
		Because $X_{x/}\to X$ is a left fibration, we have a pullback square of marked semi-simplicial spaces 
		\begin{equation*}
			\begin{tikzcd}
				\Hom^{L}_{X}(x,y)^{\sharp} \arrow[r] \arrow[d]      &   X_{x/}  \arrow[d] \\
				T^{\sharp}  \arrow[r, "\tilde{y}"]               &   X
			\end{tikzcd}
		\end{equation*}
		which furthermore satisfies the assumptions of \cref{lem:inner_fibration_pullback}. Therefore, applying $\cR_{+}\cT$ to this, and using \cref{cor:slice_comparison,lem:T_of_groupoid}, we get a pullback square
		\begin{equation*}
			\begin{tikzcd}
				\Vert \Hom^{L}_{X}(x,\tilde{y})\Vert \arrow[r] \arrow[d]      &   \cR_{+}\cT(X)^{\overline{ux}/}   \arrow[d] \\
				\Delta^{0}  \arrow[r]               &   \cR_{+}\cT(X)
			\end{tikzcd}
		\end{equation*}
		where the top left corner is a constant simplicial space. Evaluating this at $\Delta^{0}$ and using \cref{rem:mapping_space_from_slice} then gives the result.
	\end{proof}
	
	\section{Comparison of quasi-unitality conditions} \label{sec:alternative_condition_for_quasi_unitality}
	In this subsection, we compare three conditions for quasi-unitality. These are
	\begin{enumerate}
		\item Being a marked inner Kan space as in \cref{def:generating_marked_cofibrations}, generalizing \cite{henry2018weak}.
		\item Each vertex admitting an idempotent equivalence as in \cite{steimle2018degeneracies} and \cite{tanaka2018functors}.
		\item Existence of terminal and initial degeneracies $s_0, s_n \colon X_n \to X_{n+1}$ as in \cite{AB}.
	\end{enumerate}
	We begin by comparing the first two.
	\begin{definition}\label{def:unmarked_fibrations}
		Consider the following collections of maps in $ss\cS$.
		\begin{enumerate}
			\item  $J_{L} = \left\{ \Horn^{n}_{i,s}\to \Delta^{n}_{s}, 0\leq i <n \right\}$
			\item  $J_{R} = \left\{ \Horn^{n}_{i,s}\to \Delta^{n}_{s}, 0 < i \leq n \right\}$
			\item  $J_{I} = \left\{ \Horn^{n}_{i,s}\to \Delta^{n}_{s}, 0< i <n \right\}$
		\end{enumerate}
		We call $J_{L}$-, $J_{R}$-, and $J_{I}$-fibrations \emph{left, right,} and \emph{inner fibrations}, respectively. We call the corresponding classes of cofibrations \emph{left, right,} and \emph{inner} anodyne, respectively.
	\end{definition} 
	\begin{rem}\label{rem:unmarked_fibration_stability}
		Because the proof of \cite{HTT} Lemma 2.1.2.3 goes through in the unmarked setting, the obvious analogues of \cref{lem:left_anodyne_stability,cor:left_fibration_stability} hold in $ss\cS$. 
	\end{rem}
	\begin{definition}\label{def:equivalence_in_inner_Kan}
		An edge $f$ in an inner Kan space $X$ is called an \emph{equivalence} if we can solve the lifting problems
		\begin{equation*}
			\begin{tikzcd}
				\Horn^{n}_{0,s} \arrow[r, "F"] \arrow[d, hookrightarrow] & X \\
				\Delta^{n}_{s} \arrow[ur, dotted]
			\end{tikzcd}
			\quad
			\begin{tikzcd}
				\Horn^{n}_{n,s} \arrow[r, "G"] \arrow[d, hookrightarrow]& X \\
				\Delta^{n}_{s} \arrow[ur, dotted]
			\end{tikzcd}
		\end{equation*}
		up to homotopy for $2\leq n$, whenever the first edge of $F$ (resp. the last edge of $G$) is homotopic to $f$. 
	\end{definition}
	\begin{rem}\label{rem:equivalences_is_subspace}
		Note that by definition, the inclusion of the subspace $X_{1}^{eq} \subset X_{1}$ of equivalences is a monomorphism.
	\end{rem}
	\begin{rem}\label{rem:marked=>eq}
		Because the marked horn inclusions $(\Horn^{n}_{0,s}, \left\{ 0,1 \right\} ) \to (\Delta^{n}_{s}, \left\{ 0,1\right\})$ and $(\Horn^{n}_{n,s}, \left\{ n-1,n \right\} ) \to (\Delta^{n}_{s}, \left\{ n-1,n \right\})$ are in $J_{JL}$, any marked edge in an inner Kan space $X$ is an equivalence in $U(X)$. 
	\end{rem}
	\begin{definition}\label{def:idempotent}
		An edge $f$ in an inner Kan space $X$ is called \emph{idempotent} if
		\begin{enumerate}
			\item $d_0 f \sim d_1 f \in X_0$.
			\item There exists a 2-cell $H\in X_2$ with $d_i H \sim f$ for $i=0,1,2$.
		\end{enumerate}
	\end{definition}
	\begin{definition}\label{def:quasi_unital}
		We say that an inner Kan space $X$ is \emph{quasi-unital} if for every $x\in X_0$, there exists an idempotent equivalence $e$ at $x$. We say that a map $F\colon X\to Y$ between semi-simplicial spaces is quasi-unital if it takes idempotent equivalences to idempotent equivalences. 
	\end{definition}
	\begin{rem}\label{rem:quasi_unital_map}
		Note that the image of an idempotent is always an idempotent, so it suffices to assume that each $F(e)$ is an equivalence. In fact, it follows from the proof of \cref{cor:qu_iff_preserves_eqs} that for $X$ and $Y$ quasi-unital, it is sufficient to assume that $F\colon X\to Y$ preserves one idempotent equivalence at each vertex $x\in X$.
	\end{rem}
	In \cite{steimle2018degeneracies} it is shown that any levelwise discrete, quasi-unital inner Kan space is in the image of the forgetful functor $s\Set \to ss\Set$. Moreover, if we make a choice of idempotent equivalence $e_x$ for each vertex $x\in X_0$, we can take the simplicial structure on $X$ to have $s_{0}x = e_{x}$. Then, by \cite{tanaka2018functors}, a quasi-unital map between levelwise discrete quasi-unital inner Kan spaces lifts, up to unique homotopy, to a simplicial map.
	\begin{definition}\label{def:IK_qu}
		Let $\IK_{qu} \subset ss\cS$ denote the (non-full) subcategory spanned by quasi-unital inner Kan spaces and quasi-unital maps between them. 
	\end{definition}
	\begin{definition}\label{def:natural}
		For a semi-simplicial space $X$, we let $X^{\natural}$ denote the marked semi-simplicial space obtained by marking all equivalences (in the sense of \cref{def:equivalence_in_inner_Kan}) in $X$. 
	\end{definition}
	We will show that the forgetful functor and $(-)^{\natural}$ induce inverse equivalences $\IK_{+} \simeq \IK_{qu}$. We begin with some preliminary lemmas.
	\begin{lem}\label{lem:eq_fibration_condition}
		An edge $f\colon x\to y$ in an inner Kan space $X$ is an equivalence if and only if $X_{f/} \to X_{x/}$ and $X_{/f} \to X_{/y}$ are trivial fibrations.
	\end{lem}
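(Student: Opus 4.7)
The plan is to translate the trivial fibration condition on $X_{f/}\to X_{x/}$ (and symmetrically on $X_{/f}\to X_{/y}$) into a horn-filling condition in $X$ via the join--slice adjunction, and then compare with \cref{def:equivalence_in_inner_Kan}. First I would unwind a lifting problem $\partial\Delta^n_s\to X_{f/}$, $\Delta^n_s\to X_{x/}$: the universal property of the slice, applied to the inclusion $\{0\}\subset\Delta^1_s$ that specifies the source $x$ of $f$, rewrites this as a lifting problem for $X\to T$ against the inclusion
\begin{equation*}
(\Delta^1_s\join\partial\Delta^n_s)\underset{\Delta^0_s\join\partial\Delta^n_s}{\amalga}(\Delta^0_s\join\Delta^n_s)\hookrightarrow \Delta^1_s\join\Delta^n_s=\Delta^{n+2}_s,
\end{equation*}
with the constraint that the resulting map restricts to $f$ on the edge $\{0,1\}$.

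The key combinatorial step is to identify this pushout with the horn $\Horn^{n+2}_{0,s}$. Using the explicit join formula, the summand $\Delta^0_s\join\Delta^n_s$ has vertices $\{0,2,3,\ldots,n+2\}$ and is the face $d_1\Delta^{n+2}_s$, while $\Delta^1_s\join\partial\Delta^n_s$ is the union of the faces $d_2,d_3,\ldots,d_{n+2}$, each obtained by removing one vertex of $\{2,\ldots,n+2\}$. Together these cover every face of $\Delta^{n+2}_s$ except $d_0$, yielding $\Horn^{n+2}_{0,s}$; under this identification the edge $\{0,1\}$ is precisely the restriction of $f$. Consequently, $X_{f/}\to X_{x/}$ is a trivial fibration if and only if every horn $\Horn^{n+2}_{0,s}\to X$ whose first edge is $f$ extends to $\Delta^{n+2}_s$ up to homotopy, for all $n\geq 0$. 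A dual computation handles $X_{/f}\to X_{/y}$, giving the same condition with $\Horn^{n+2}_{n+2,s}$ and last edge $f$.

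These are exactly the two halves of \cref{def:equivalence_in_inner_Kan}, except that the equivalence condition asks only that the first (resp. last) edge be \emph{homotopic} to $f$, rather than literally equal to it. The two formulations coincide because the construction $X_{f/}$ is homotopy-invariant in $f$ as an object of $ss\cS_{/X}$: any homotopy $g\simeq f$ in $X_1$ induces an equivalence $X_{f/}\simeq X_{g/}$ over $X$. Concretely, given a horn with first edge $g\simeq f$, one uses a homotopy between $g$ and $f$ together with inner horn fillers in $X$ (always available by \cref{def:kan_fibration}-style inner Kan-ness) to replace the horn by a homotopic one with first edge exactly $f$, fill by the trivial fibration hypothesis, and transport the solution back. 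I expect the main technical step to be the combinatorial bookkeeping identifying the pushout with $\Horn^{n+2}_{0,s}$; the homotopy-invariance replacement is a routine $\infty$-categorical manipulation in the framework already set up.
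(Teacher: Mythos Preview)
Your proposal is correct and follows essentially the same approach as the paper: both translate the lifting problem $\partial\Delta^n_s\to X_{f/}$, $\Delta^n_s\to X_{x/}$ via the join--slice adjunction into filling a $\Horn^{n+2}_{0,s}$ in $X$ with first edge $f$, and then invoke the dual argument for the other half. You spell out the combinatorial identification of the pushout with the horn and the homotopy-invariance issue more explicitly than the paper does, but the underlying argument is the same.
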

	\begin{proof}
		A lifting problem of the form
		\begin{equation*}
			\begin{tikzcd}
				\del \Delta^{n}_{s} \arrow[r] \arrow[d]      &   X_{f/}  \arrow[d] \\
				\Delta^{n}_{s}  \arrow[r]  \arrow[ur,dashed]             &   X_{x/}
			\end{tikzcd}
		\end{equation*}
		for $n\geq 0$ is equivalent by adjunction to one of the form
		\begin{equation*}
			\begin{tikzcd}
				\Delta^{1}_{s}\join \del \Delta^{n}_{s} \underset{\Delta^{0}_{s}\join \del \Delta^{n}_{s}}{\amalga} \Delta^{0}_{s} \join \Delta^{n}_{s}  \arrow[r] \arrow[d]      &   X  \arrow[d] \\
				\Delta^{1}_{s}\join \Delta^{n}_{s}  \arrow[r] \arrow[ur, dashed]              &   T
			\end{tikzcd}
		\end{equation*}
		which is precisely a filling of a $\Horn_{0,s}^{n+2}$ with first edge $f$. A dual argument works for $X_{/f} \to X_{/y}$.
	\end{proof}
	\begin{cor}\label{cor:trivial_fibration_is_conservative}
		Let $p\colon X\to Y$ be a trivial fibration between semi-simplicial spaces. An edge $f\colon x\to y$ in $X$ is an equivalence if and only if $p(f)$ is an equivalence in $Y$.
	\end{cor}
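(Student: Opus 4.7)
The plan is to reduce the claim to a comparison of slice projections via \cref{lem:eq_fibration_condition}. That lemma characterizes $f\colon x\to y$ as an equivalence by the conditions that $\alpha\colon X_{f/}\to X_{x/}$ and $X_{/f}\to X_{/y}$ are trivial fibrations; applied to $p(f)$, it characterizes $p(f)$ as an equivalence by $\delta\colon Y_{p(f)/}\to Y_{p(x)/}$ and $Y_{/p(f)}\to Y_{/p(y)}$ being trivial fibrations. By symmetry between left and right slices, it suffices to prove that $\alpha$ is a trivial fibration if and only if $\delta$ is.

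To do this, I set up the obvious commutative square with $\alpha,\delta$ as the horizontal edges and with vertical maps $\beta\colon X_{f/}\to Y_{p(f)/}$ and $\gamma\colon X_{x/}\to Y_{p(x)/}$ induced by $p$. Two inputs are key. First, $\beta$ and $\gamma$ are themselves trivial fibrations: a lifting problem for $\beta$ against $\del\Delta^{n}_{s}\to\Delta^{n}_{s}$ is adjoint, via the join--slice adjunction, to a lifting problem for $p$ against the levelwise injection $\Delta^{1}_{s}\join\del\Delta^{n}_{s}\to \Delta^{1}_{s}\join\Delta^{n}_{s}$, which $p$ solves because it is a trivial fibration. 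Second, by the unmarked analogue of \cref{cor:left_fibration_stability} (see \cref{rem:unmarked_fibration_stability}), applied with the cofibration $\Delta^{\{0\}}_{s}\to\Delta^{1}_{s}$ and the map $f\colon\Delta^{1}_{s}\to X$, the comparison map
\[
\eta\colon X_{f/}\to X_{x/}\timest_{Y_{p(x)/}} Y_{p(f)/}
\]
into the pullback is also a trivial fibration.

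The conclusion is then a routine diagram chase. If $\delta$ is a trivial fibration, then the pullback projection $X_{x/}\timest_{Y_{p(x)/}} Y_{p(f)/}\to X_{x/}$ is a trivial fibration as a base change of $\delta$ (\cref{rem:closure_of_fibrations}), and $\alpha$ is the composite of $\eta$ with this projection, hence a trivial fibration. Conversely, if $\alpha$ is a trivial fibration, then $\gamma\alpha=\delta\beta$ is a composite of trivial fibrations, hence a trivial fibration; combined with $\beta$ being a trivial fibration, \cref{lem:trivial_fib_cancellation} (whose proof transfers verbatim from $ss\cS_{+}$ to the unmarked setting $ss\cS$) yields that $\delta$ is a trivial fibration.

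There is no real obstacle: the only nontrivial ingredient is the trivial-fibration status of the comparison map $\eta$, which is exactly the content of \cref{cor:left_fibration_stability}. Everything else is a formal chase using closure properties of trivial fibrations.
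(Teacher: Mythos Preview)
Your proof is correct and follows essentially the same route as the paper's: reduce via \cref{lem:eq_fibration_condition} to comparing the slice projections, use that the comparison map into the pullback (your $\eta$, the paper's $a$) is a trivial fibration by the unmarked analogue of \cref{cor:left_fibration_stability}, and conclude both directions using pullback stability and the cancellation lemma. The paper is slightly terser and cites \cref{cor:left_fibration_stability} directly for both $a$ and $b=\gamma$, whereas you verify $\beta$ and $\gamma$ by hand via the join--slice adjunction; both are fine and amount to the same argument.
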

	\begin{proof}
		Consider the following diagram.
		\begin{equation*}
			\begin{tikzcd}
				X_{f/} \arrow[r, "a"] &  X_{x/}\timest_{Y_{p(x)/}}  Y_{p(f)/} \arrow[r] \arrow[d]      &   X_{x/}  \arrow[d, "b"] \\
				& Y_{p(f)/}  \arrow[r]               &   Y_{p(x)/}
			\end{tikzcd}
		\end{equation*}
		By \cref{cor:left_fibration_stability}, the maps $a$ and $b$ are trivial fibrations. By pullback stability of trivial fibrations and \cref{lem:trivial_fib_cancellation}, $Y_{p(f)/}\to Y_{p(x)/}$ is a trivial fibration if and only if $X_{f/}\to X_{x/}$ is a trivial fibration. A dual argument shows that $Y_{/p(f)} \to Y_{/p(y)}$ is a trivial fibration if and only if $X_{/f} \to X_{/y}$ is. Now by \cref{lem:eq_fibration_condition}, $p(f)$ is an equivalence if and only if $f$ is. 
	\end{proof}
	\begin{cor}\label{cor:trivial_fibration_preserves_quasi_unitality}
		If $p\colon X\to Y$ is a trivial fibration, and $Y$ is an inner Kan space, then $X$ is also an inner Kan space, which is moreover quasi-unital if and only if $Y$ is.
	\end{cor}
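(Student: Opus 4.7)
The plan is to show first that $X$ is inner Kan by a routine two-step lifting argument, and then to transfer idempotent equivalences between $X$ and $Y$ in both directions, using \cref{cor:trivial_fibration_is_conservative} together with the RLP of $p$ at low-dimensional boundary inclusions.

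For the inner Kan-ness of $X$: every inner horn inclusion $\Horn^{n}_{i,s} \to \Delta^{n}_{s}$ is a levelwise injection between semi-simplicial sets, hence lies in $I_{KQ}$-cof by the remark following \cref{def:kan_fibration}, so the trivial fibration $p$ has RLP with respect to it. To fill an inner horn $\Horn^{n}_{i,s} \to X$, first compose with $p$ and fill using inner Kan-ness of $Y$, then lift the resulting $\Delta^{n}_{s} \to Y$ along $p$.

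For the ``only if'' direction of quasi-unitality, fix $x \in X_0$ and pick an idempotent equivalence $e$ at $p(x)$ in $Y$. Using RLP of $p$ with respect to $\del \Delta^{1}_{s} \to \Delta^{1}_{s}$, one can lift $e$ to an edge $\tilde{e} \colon x \to x$ in $X$ with $p(\tilde{e}) \sim e$ (taking the $X$-boundary to be the pair $(x,x)$, which matches $p(x)$ on both ends); this lift is an equivalence in $X$ by \cref{cor:trivial_fibration_is_conservative}. The 2-cell $H$ in $Y$ witnessing idempotence of $e$ can then be lifted by RLP with respect to $\del \Delta^{2}_{s} \to \Delta^{2}_{s}$, where the $X$-boundary is taken to be three copies of $\tilde{e}$ (which is consistent with $H\vert_{\del \Delta^{2}_{s}}$ up to the homotopy $p(\tilde{e}) \sim e$). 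This produces a 2-cell $\tilde H$ with $d_{i}\tilde H \sim \tilde{e}$, so $\tilde{e}$ is an idempotent equivalence.

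For the converse, RLP of $p$ with respect to $\emptyset = \del \Delta^{0}_{s} \to \Delta^{0}_{s}$ makes $p_{0} \colon X_0 \to Y_0$ an epimorphism of spaces, so any $y \in Y_0$ is equivalent to $p(x)$ for some $x \in X_0$. An idempotent equivalence $\tilde{e}$ at $x$ then pushes forward to $p(\tilde{e})$ at $y$, which is an equivalence by \cref{cor:trivial_fibration_is_conservative} and inherits an idempotence 2-cell by direct pushforward of $\tilde H$. I do not anticipate a genuine obstacle; the only subtlety is that all the commutativities occur up to coherent homotopy, which is handled automatically by the formulation of RLP in terms of epimorphisms of mapping spaces (\cref{def:S-cof}).
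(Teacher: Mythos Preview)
Your proof is correct and follows essentially the same route as the paper: lift low-dimensional boundary data along $p$ to transport idempotent equivalences from $Y$ to $X$, push them forward in the other direction, and invoke \cref{cor:trivial_fibration_is_conservative} both times to control equivalences. The only slip is a labeling swap: what you call the ``only if'' direction (fixing $x\in X_0$ and using an idempotent equivalence at $p(x)$ in $Y$) is really the ``if'' direction (assuming $Y$ quasi-unital, concluding $X$ is), and vice versa; the mathematics is unaffected.
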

	\begin{proof}
		The inner Kan space property is obvious because trivial fibrations are also inner fibrations. Now assume $Y$ is quasi-unital. For any vertex $x\in X_0$, take a 2-cell $H \in Y_2$ witnessing an idempotent equivalence $e\colon p(x) \to p(x)$. Use the trivial Kan fibration property, first for $\del \Delta^{1}_{s} \to \Delta^{1}_{s}$ to lift $e$ to an edge $e':x\to x$ in $X$, and then for $\del \Delta^{2}_{s} \to \Delta^{2}_{s}$ to lift $H$ to a 2-cell $H'$ witnessing $e'$ as idempotent. Now because $p(e')$ is an equivalence, so is $e$ by \cref{cor:trivial_fibration_is_conservative}.
		
		Now assume $X$ is quasi-unital. For any vertex $y\in Y_0$, lift to a vertex $x\in X_0$ such that $p(x)=y$. Let $H$ be a 2-simplex witnessing an idempotent equivalence at $x$. Then by \cref{cor:trivial_fibration_is_conservative}, $p(H)$ witnesses an idempotent equivalence at $y$. 
	\end{proof}
	\begin{lem}\label{lem:equivalence_in_quasi-category}
		Let $X \in s\Set$ be a quasi-category. Then an edge $f$ in $X$ is invertible if and only if the corresponding edge in $\cF(X)$ is an equivalence in the sense of \cref{def:equivalence_in_inner_Kan}.
	\end{lem}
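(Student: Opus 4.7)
The plan is to reduce this to the classical characterization of invertible edges in a quasi-category by outer horn fillers. Because $\cL(\Horn^n_{i,s}) = \Horn^n_i$ and $\cL(\Delta^n_s) = \Delta^n$, the adjunction $\cL \dashv \cF$ identifies semi-simplicial lifting problems $\Horn^n_{i,s} \to \cF(X)$ with ordinary simplicial lifting problems $\Horn^n_i \to X$. Since $X$ is a simplicial set, the mapping spaces $\Map_{ss\cS}(K, \cF(X))$ for a finite semi-simplicial set $K$ are discrete (finite limits of sets inside $\cS$), so ``up to homotopy'' in \cref{def:equivalence_in_inner_Kan} amounts to the literal existence of a strict filler in $X$, and homotopy of edges in $\cF(X)$ coincides with the usual homotopy relation on $X_1$ recorded by $2$-simplices of $X$.

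Under this translation, the statement becomes: $f$ is invertible in the quasi-category $X$ if and only if every left outer horn $\Horn^n_0 \to X$ whose first edge is homotopic to $f$ admits a filler, and dually for right outer horns $\Horn^n_n \to X$ whose last edge is homotopic to $f$, in each degree $n \geq 2$. For the forward direction I would invoke Joyal's classical outer-horn-filling criterion for invertible edges in a quasi-category (see, e.g., \cite{HTT}). The hypothesis ``first edge homotopic to $f$'' causes no trouble, since invertibility in $\ho(X)$ depends only on the homotopy class of $f$.

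For the converse I would argue directly by constructing a two-sided inverse in $\ho(X)$ from the $n=2$ case of the hypothesis. Applied to a horn $\Horn^2_0 \to X$ with $d_2 = f$ and $d_1 = s_0 x$, the filler produces a $2$-simplex whose $d_0$ is an edge $g\colon y \to x$ satisfying $g\circ f \sim s_0 x$. Dually, a horn $\Horn^2_2 \to X$ with $d_0 = f$ and $d_1 = s_0 y$ produces $h\colon y \to x$ with $f\circ h \sim s_0 y$. In the homotopy category one then has $[g] = [g]\cdot[f]\cdot[h] = [h]$, exhibiting $[f]$ as an isomorphism, so $f$ is invertible.

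The genuinely nontrivial input is the forward direction, which rests on Joyal's theorem; everything else is either a formal $(\cL \dashv \cF)$/$\pi_0$ reduction or a one-line manipulation in $\ho(X)$.
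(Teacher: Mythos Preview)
Your proof is correct, though there is one minor confusion worth flagging: since $X$ is a simplicial \emph{set}, the space $\cF(X)_1 = X_1$ is discrete, so ``homotopic to $f$'' in \cref{def:equivalence_in_inner_Kan} literally means ``equal to $f$'', not ``related by a $2$-simplex''. This only simplifies your argument (you never actually need to invoke the $2$-simplex-witnessed homotopy relation), so the logic goes through unchanged.

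Your route is genuinely different from the paper's. The paper does not appeal to Joyal's outer-horn theorem directly; instead it passes through \cref{lem:eq_fibration_condition}, which characterizes equivalences in $\cF(X)$ by the slice maps $\cF(X)_{f/} \to \cF(X)_{x/}$ and $\cF(X)_{/f} \to \cF(X)_{/y}$ being trivial fibrations. It then identifies these with $\cF(X_{f/}) \to \cF(X_{x/})$ (using that $\join$ commutes with $\cL$, hence slices commute with $\cF$), and finally uses unstraightening and the Yoneda lemma to see that $X_{f/} \to X_{x/}$ being a trivial fibration is equivalent to $f$ being invertible in $X$. Your approach is more elementary and self-contained, taking Joyal's theorem as a black box and finishing the converse with a two-line computation in $\ho(X)$; the paper's approach is less direct but dovetails with the slice-based characterization of equivalences that drives the surrounding results (e.g.\ \cref{cor:trivial_fibration_is_conservative,cor:2-out-of-6_for_eqs}).
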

	\begin{proof}
		Because join commutes with left Kan extension, we get by uniqueness of right adjoints that for any semi-simplicial map $p\colon J\to \cF(X)$ the slice $\cF(X)_{p/}$ agrees with $\cF(X_{\tilde{p}/})$ where $\tilde{p}\colon \cL(J) \to X$ is the adjunct of $p$. Under unstraightening, the natural transformation $f^{*}\colon \Map_{X}(y,-) \to \Map_{X}(x, -)$ between copresheaves is represented by the span of left fibrations
		\begin{align*}
			X_{y/} \xleftarrow{\sim} X_{f/} \rightarrow X_{x/} \, .
		\end{align*}
		Hence, it follows from the Yoneda lemma that $f$ is an equivalence in $X$ if and only if $X_{f/} \to X_{x/}$ is an equivalence. A left fibration between quas-categories is an equivalence if and only if it is a trivial fibration. Trivial fibrations are detected by $\cF$, and by the above $\cF(X_{f/}) \to \cF(X_{x/})$ is a trivial fibration if and only if $\cF(X)_{f/} \to \cF(X)_{x/}$ is. A dual argument shows that $f$ is an equivalence in $X$ if and only if $\cF(X)_{/f} \to \cF(X)_{/y}$ is a trivial fibration. It then follows from \cref{lem:eq_fibration_condition} that $f$ is an equivalence in $X$ if and only if the adjunct edge in $\cF(X)$ is an equivalence.
	\end{proof}
	\begin{cor}\label{cor:2-out-of-6_for_eqs}
		The collection of equivalences in a quasi-unital inner Kan space satisfies the 2-out-of-6 property.
	\end{cor}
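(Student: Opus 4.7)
The plan is to reduce the $2$-out-of-$6$ property for equivalences in $X$ to the corresponding property for isomorphisms in an ordinary category via a two-step replacement: first by a semi-simplicial set $\overline{X}$ equipped with a trivial fibration $p\colon \overline{X}\to X$, then by lifting $\overline{X}$ to a quasi-category $X'$. Concretely, $2$-out-of-$6$ in this setting amounts to the statement that, given a $3$-simplex $\sigma\colon \Delta^{3}_{s}\to X$ whose $\{0,2\}$ and $\{1,3\}$ edges are equivalences, all six edges of $\sigma$ are equivalences.

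For the first reduction, apply \cref{lem:simplicial_set_replacement} to produce a trivial fibration $p\colon \overline{X}\to X$ with $\overline{X}$ a semi-simplicial set. By \cref{cor:trivial_fibration_preserves_quasi_unitality}, $\overline{X}$ is a quasi-unital inner Kan semi-simplicial set, and by \cref{cor:trivial_fibration_is_conservative}, the map $p$ detects equivalences. Using the right lifting property of $p$ against $\emptyset\to \Delta^{3}_{s}$, lift $\sigma$ to a $3$-simplex $\overline{\sigma}\in \overline{X}_3$ whose $\{0,2\}$ and $\{1,3\}$ edges are equivalences, so it suffices to prove the result for $\overline{X}$.

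For the second reduction, apply the extension theorem of \cite{steimle2018degeneracies} to lift $\overline{X}$ through the forgetful functor $\cF\colon s\Set\to ss\Set$ to a simplicial set $X'$. Since the underlying semi-simplicial set of $X'$ is $\overline{X}$, which admits inner horn fillers, $X'$ is a quasi-category, and $\overline{\sigma}$ corresponds to a $3$-simplex $\sigma'\in X'_3$. By \cref{lem:equivalence_in_quasi-category}, the $\{0,2\}$ and $\{1,3\}$ edges of $\sigma'$ are invertible in $X'$, and the claim for $\overline{X}$ is equivalent to invertibility of all six edges of $\sigma'$.

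To finish, invertibility in a quasi-category is detected by the homotopy category $\ho(X')$, which is an ordinary category where $2$-out-of-$6$ for isomorphisms is elementary: the two given isomorphisms furnish a left and a right inverse for the middle edge, these agree by associativity, and invertibility of the outer edges and of the full composite then follows by a short cancellation. Tracing back through \cref{lem:equivalence_in_quasi-category} and \cref{cor:trivial_fibration_is_conservative} completes the argument. The only genuinely subtle point is ensuring that Steimle's lift $X'$ is a bona fide quasi-category, but this is immediate from the equality $\cF(X')=\overline{X}$ together with $\overline{X}$ being inner Kan.
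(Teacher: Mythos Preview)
Your proposal is correct and follows essentially the same route as the paper: replace $X$ by a semi-simplicial set via a trivial fibration (using \cref{lem:simplicial_set_replacement}, \cref{cor:trivial_fibration_preserves_quasi_unitality}, \cref{cor:trivial_fibration_is_conservative}), lift to a quasi-category via \cite{steimle2018degeneracies}, invoke \cref{lem:equivalence_in_quasi-category}, and use $2$-out-of-$6$ for invertibles in a quasi-category. The only cosmetic differences are that you swap the roles of the names $\overline{X}$ and $X'$ relative to the paper, and you spell out the $2$-out-of-$6$ argument via the homotopy category where the paper simply asserts it.
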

	\begin{proof}
		Let $X$ be a quasi-unital inner Kan space, and $H\colon \Delta^{3}_{s} \to X$ a 3-cell with edges
		\begin{equation*}
			\begin{tikzcd}
				x \arrow[r, "f"'] \arrow[rr, bend left, "e_{x}"] & y \arrow[r, "g"] \arrow[rr, bend right, "e_{y}"'] & \tilde{x} \arrow[r, "\tilde{f}"] &  \tilde{y} \, ,
			\end{tikzcd}
		\end{equation*}
		where $e_x$ and $e_{y}$ are equivalences. Pick a trivial Kan fibration $X'\to X$ from a semi-simplicial set, and lift $H$ to a map $H'\colon \Delta_{s}^{3} \to X'$. By \cref{cor:trivial_fibration_is_conservative}, the edges $e'_{x}$ and $e'_{y}$ of $H'$ that lift $e_{x}$ and $e_{y}$, respectively, are equivalences. Hence by \cref{cor:trivial_fibration_preserves_quasi_unitality} $X'$ is a quasi-unital quasi-category, so by \cite{steimle2018degeneracies}, $X'=\cF(\overline{X})$ for a quasi-category $\overline{X}$. Now consider the adjunct $H''$ of $H'$. 
		By naturality of adjuncts, $H''$ has edges that are adjuncts of the edges in $H'$. In particular, by \cref{lem:equivalence_in_quasi-category}, the edges $e''_{x}$ and $e''_{y}$, which are adjuncts of $e'_{x}$ and $e'_{y}$, are equivalences in $\overline{X}$. Now equivalences in a quasi-category have the 2-out-of-6 property, so all the edges in $H''$ are equivalences. Then by \cref{cor:trivial_fibration_is_conservative,lem:equivalence_in_quasi-category}, so are all the edges in $H$.
	\end{proof}
	\begin{lem}\label{lem:eqs_iff_invertible}
		An edge $f\colon x\to y$ in a quasi-unital inner Kan space $X$ is an equivalence if and only if for any choice of idempotent equivalences $sx\colon x\to x$ and $sy\colon y\to y$ there exists an edge $g\colon y \to x$ and an $H\in X_{3}$ with edges
		\begin{equation}\label{eq:inverse_witness}
			\begin{tikzcd}
				x \arrow[r, "f"'] \arrow[rr, bend left, "sx"] & y \arrow[r, "g"] \arrow[rr, bend right, "sy"'] & x \arrow[r, "f"] & y \, .
			\end{tikzcd}
		\end{equation}
	\end{lem}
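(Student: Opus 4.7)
My plan is to prove the two implications separately. The backward direction will be a direct application of the 2-out-of-6 property for equivalences, while the forward direction will reduce, via a semi-simplicial set replacement, to a routine construction inside a quasi-category.

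\textbf{Backward direction.} Suppose such $g$ and $H$ exist. The edges $sx$ and $sy$ sit as the $\{0,2\}$ and $\{1,3\}$ edges of $H$, and both are equivalences by hypothesis. Then \cref{cor:2-out-of-6_for_eqs} forces every edge of $H$ to be an equivalence; in particular $f$ is an equivalence.

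\textbf{Forward direction.} First, I will reduce from a semi-simplicial space to a semi-simplicial set: by \cref{lem:simplicial_set_replacement}, pick a trivial Kan fibration $p\colon X'\to X$ with $X'$ a semi-simplicial set, which by \cref{cor:trivial_fibration_preserves_quasi_unitality} is again a quasi-unital inner Kan space. Lift the data $(x, y, f, sx, sy)$ together with the 2-cells witnessing idempotency to corresponding data $(x', y', f', sx', sy')$ in $X'$; by \cref{cor:trivial_fibration_is_conservative}, $f'$ is an equivalence and $sx', sy'$ are idempotent equivalences. By \cite{steimle2018degeneracies}, promote $X'$ to a quasi-category $\overline{X}$ whose degeneracies realize the prescribed idempotents, i.e. $s_0 x' = sx'$ and $s_0 y' = sy'$.

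Inside $\overline{X}$, fill a $\Horn^{2}_{0,s}$ horn with first edge $f'$ and long edge $sx'$ to produce $g' \colon y' \to x'$ and a 2-simplex $T$ witnessing $g' \circ f' \sim sx'$. Now consider the $\Horn^{3}_{0,s}$ horn with $d_{1} = s_0 f'$, $d_{2} = s_{1} f'$, and $d_{3} = T$: the simplicial identities immediately verify compatibility on the three shared edges. Since $f'$ is an equivalence, the outer horn admits a filler $H' \in \overline{X}_{3}$, and by direct inspection its six edges are precisely those of \cref{eq:inverse_witness} (with $sx', sy'$ in the outer positions), while $d_{0}H'$ automatically witnesses $f' \circ g' \sim sy'$. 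Setting $H := p(H')$ and $g := p(g')$ then produces the required data in $X$.

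\textbf{Expected obstacle.} The essential technical weight of the forward direction is carried by \cite{steimle2018degeneracies}, and specifically by its ability to match the degeneracies of $\overline{X}$ to a prescribed choice of idempotent equivalences. If this strong form of the result were unavailable, I would instead construct $H'$ using the degeneracies $s_0 x'$ and $s_0 y'$ (which only agree with $sx', sy'$ up to homotopy) and then modify $H'$ along 2-simplices connecting each $s_0$ to the corresponding prescribed $sx'$ or $sy'$; such 2-simplices exist in any quasi-category because two idempotent equivalences at a given vertex both represent the identity in the homotopy category.
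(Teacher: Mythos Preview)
Your backward direction is correct and identical to the paper's.

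For the forward direction your approach differs from the paper's. The paper never leaves $X$: it builds by hand a 2-simplex $s_0 f$ witnessing $f\circ sx \sim f$ (first producing $\tilde f$ with $\tilde f\circ sx\sim f$ from a $\Horn^2_0$-filler, then using idempotence of $sx$ and an inner $\Horn^3_2$-filler to upgrade this to $f\circ sx\sim f$), and dually a 2-simplex $s_1 f$ witnessing $sy\circ f\sim f$; after that it fills the same $\Horn^2_0$ and $\Horn^3_0$ as you do. Your route instead exports the problem to a quasi-category $\overline{X}$ via \cref{lem:simplicial_set_replacement} and \cite{steimle2018degeneracies}, so that the two ``unit'' 2-simplices come for free as the actual degeneracies $s_0 f'$ and $s_1 f'$. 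This is conceptually cleaner once inside $\overline{X}$, at the price of the reduction step and of invoking the prescribed-degeneracy form of Steimle's theorem; the paper's argument is more self-contained and uses only horn-filling in $X$.

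There is one small gap you should address. Your use of Steimle with \emph{both} $s_0 x'=sx'$ and $s_0 y'=sy'$ requires $x'\neq y'$ in the set $X'_0$ (or $sx'=sy'$). If $x\sim y$ in $X_0$ and the fibre of $X'_0\to\pi_0 X_0$ over that component happens to be a singleton, you are forced into $x'=y'$; then $s_1 f'$ has last edge $s_0 x'=sx'$ rather than $sy'$, so $p(H')$ has the wrong $\{1,3\}$-edge. Your ``expected obstacle'' paragraph gestures at a fix, but the cleanest repair is simply to construct the 2-simplex with edges $f',\,sy',\,f'$ directly by horn-filling (exactly as the paper does for $s_1 f$), rather than relying on the degeneracy; then the rest of your argument goes through unchanged.
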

	\begin{proof}
		The only if part follows immediately from \cref{cor:2-out-of-6_for_eqs}. Assume that $f\colon x\to y$ is an equivalence, and $sx$ and $sy$ are idempotent equivalences at $x$ and $y$, respectively. Because $sx$ is an equivalence, we can find an edge $\tilde{f}$ and a 2-simplex $\tilde{s}_{0}f$ witnessing $\tilde{f}\circ sx = f$. Now fill a horn $\Horn^{3}_{2,s} \to X$ constructed by gluing two copies of $\tilde{s}_0f$ along $\tilde{f}$, and a 2-simplex $ssx$ witnessing the idempotence of $sx$. The $d_2$ face $s_0f$ of the resulting simplex witnesses $f\circ sx = f$. Similarly, we can find a 2-simplex $s_1 f$ witnessing $sy\circ f = f$. Now because $f$ is an equivalence, we can fill a $\Horn^{2}_{0,s}$ to find an edge $g$ and a 2-simplex $d_3H$ witnessing $g\circ f = sx$. Now construct a $\Horn^{3}_{0,s} \to X$ by gluing the 2-simplices $d_3 H, s_0f$ and $s_1 f$ along their common edges. Because $f$ is an equivalence, we can fill this horn to obtain the desired simplex $H$. 
	\end{proof}
	\begin{rem}
		We say that a $3$-simplex $H$ with edges as in \cref{eq:inverse_witness} witnesses an inverse to $f$ with respect to $sx$ and $sy$.
	\end{rem}
	\begin{cor}\label{cor:qu_iff_preserves_eqs}
		A map $F\colon X\to Y$ between quasi-unital inner Kan spaces preserves equivalences if and only if it preserves idempotent equivalences. 
	\end{cor}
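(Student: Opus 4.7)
The plan is to split the statement into its two implications. The forward direction, that preserving equivalences implies preserving idempotent equivalences, is essentially formal. The idempotence of an edge $e$ is expressed purely by the face relations $d_0 e \sim d_1 e$ and the existence of a $2$-simplex $H$ with $d_i H \sim e$ for $i = 0,1,2$, and both of these properties are visibly preserved by any semi-simplicial map (as already noted in \cref{rem:quasi_unital_map}). Combined with the hypothesis that $F$ preserves equivalences, this handles one direction.

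For the reverse direction, I would take an equivalence $f \colon x \to y$ in $X$ and show that $F(f)$ is an equivalence in $Y$. Using the quasi-unitality of $X$, choose idempotent equivalences $sx$ at $x$ and $sy$ at $y$. By \cref{lem:eqs_iff_invertible}, there exists an edge $g \colon y \to x$ and a $3$-simplex $H \in X_3$ as in \cref{eq:inverse_witness}, whose $\{0,2\}$ and $\{1,3\}$ edges are $sx$ and $sy$, respectively. Applying $F$ produces a $3$-simplex $F(H)$ in $Y$ whose $\{0,2\}$ and $\{1,3\}$ edges are $F(sx)$ and $F(sy)$. By the hypothesis that $F$ preserves idempotent equivalences, both $F(sx)$ and $F(sy)$ are equivalences in $Y$. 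Invoking the $2$-out-of-$6$ property of equivalences in the quasi-unital inner Kan space $Y$ (\cref{cor:2-out-of-6_for_eqs}) applied to $F(H)$, all six edges of $F(H)$ are equivalences, and in particular so is $F(f)$.

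There is no real obstacle here: all the substantive work has already been done by \cref{lem:eqs_iff_invertible}, which packages an equivalence in terms of a fixed pair of idempotent witnesses via a single $3$-simplex, and by \cref{cor:2-out-of-6_for_eqs}, which transports the equivalence property along the diagonals of that $3$-simplex. The only point worth emphasizing is that the choice of $sx$ and $sy$ is made in $X$, so we need $F(sx)$ and $F(sy)$ to be equivalences in $Y$ — which is exactly the preservation-of-idempotent-equivalences hypothesis. This same argument also justifies \cref{rem:quasi_unital_map}: since we only use one idempotent equivalence at each of $x$ and $y$, it suffices for $F$ to preserve a single idempotent equivalence at each vertex.
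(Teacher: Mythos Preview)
Your proof is correct and follows essentially the same route as the paper: both directions are handled identically, and for the reverse implication you choose idempotent equivalences $sx$, $sy$, invoke \cref{lem:eqs_iff_invertible} to produce the $3$-simplex $H$, and then push forward by $F$. The only cosmetic difference is that the paper concludes by citing the ``if'' direction of \cref{lem:eqs_iff_invertible} (whose proof is precisely \cref{cor:2-out-of-6_for_eqs}), whereas you invoke \cref{cor:2-out-of-6_for_eqs} directly; these are the same argument.
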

	\begin{proof}
		Any semi-simplicial map preserves idempotents, and so the only if part follows immediately. Assume that $F$ preserves idempotent equivalences, and let $f\colon x\to y$ be an equivalence in $X$. Choose idempotent equivalences $sx$ and $sy$ at $x$ and $y$, respectively. By \cref{lem:eqs_iff_invertible} we may choose a 3-simplex $H$ in $X$ witnessing an inverse of $f$ with respect to $sx$ and $sy$. Then $F(H)$ is a 3-simplex which, because $F$ preserves idempotent equivalences, witnesses an inverse of $F(f)$ with respect to $F(sx)$ and $F(sy)$. In particular, $F(f)$ is an equivalence by \cref{lem:eqs_iff_invertible}.
	\end{proof}
	The previous corollary shows that the assignment $X\mapsto X^{\natural}$ defines a functor $\IK_{qu} \to ss\cS_{+}$. We now show that this functor factors through the full subcategory of marked inner Kan spaces.
	\begin{cor}\label{lem:qu_iff_enough_eqs}
		An inner Kan space $X$ is quasi-unital if and only if $X^{\natural}$ is a marked inner Kan space.
	\end{cor}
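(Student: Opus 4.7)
The plan is to verify the three defining conditions of a marked inner Kan space — (i) admitting inner horn fillers, (ii) marked edges being equivalences, (iii) the 2-out-of-6 property for marked edges — in each direction. The forward direction should be essentially immediate, while the converse requires transporting the problem to the setting of semi-simplicial sets.

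For the forward direction, assume $X$ is a quasi-unital inner Kan space. Condition (i) is immediate since $X$ is an inner Kan space, condition (ii) is a tautology because $X^{\natural}$ is defined by marking precisely the equivalences, and condition (iii) is exactly \cref{cor:2-out-of-6_for_eqs}.

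For the converse, my approach is to reduce to the semi-simplicial set case. Using \cref{lem:simplicial_set_replacement}, pick a trivial Kan fibration $p\colon \overline{X}\to X$ with $\overline{X}$ a semi-simplicial set. By \cref{cor:trivial_fibration_preserves_quasi_unitality} it suffices to show that $\overline{X}$ is quasi-unital. I would first verify that $(\overline{X})^{\natural}$ is itself a marked inner Kan space: $\overline{X}$ inherits inner horn fillers because $p$ is an inner fibration into one, and the 2-out-of-6 property transfers from $X^{\natural}$ to $(\overline{X})^{\natural}$ via \cref{cor:trivial_fibration_is_conservative}, which identifies equivalences in $\overline{X}$ with preimages of equivalences in $X$. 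Since $(\overline{X})^{\natural}$ is now a marked semi-simplicial \emph{set} satisfying the three conditions, by the identification stated in the introduction it is a fibrant object of the weak Joyal--Lurie model structure of \cite{henry2018weak}. Appealing to the extension result of \cite{steimle2018degeneracies}, $\overline{X}$ then lifts to a quasi-category, whose degenerate edges $s_{0}\bar{x}$ exhibit idempotent equivalences at every vertex. Thus $\overline{X}$ is quasi-unital, and \cref{cor:trivial_fibration_preserves_quasi_unitality} transports this back to $X$.

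The main obstacle is the backward direction: unlike the forward direction, where each of the three conditions can be checked directly, here one must actually exhibit an idempotent equivalence at every vertex starting from the abstract conditions on $X^{\natural}$. The reduction to semi-simplicial sets via a trivial fibration is essential, because producing units ``from nothing'' is exactly what the extension results of \cite{henry2018weak,steimle2018degeneracies} are designed to supply, and there seems to be no direct construction of such an idempotent at a given vertex using only inner horn fillers and the 2-out-of-6 property.
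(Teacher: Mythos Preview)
Your forward direction is morally correct but incomplete. The three informal conditions from the introduction do not fully capture $J_{JL}$-fibrancy: the admissible horn inclusions include the case $n=1$, namely $\Delta^{0}_{s} \to \Delta^{1,\sharp}_{s}$, and lifting against these requires that every vertex carry a marked (i.e.\ equivalence) edge. This is precisely where quasi-unitality enters --- the idempotent equivalence at $x$ supplies the filler --- and the paper checks it explicitly as its first item. Your checks (i)--(iii) would be satisfied vacuously by $X=\Delta^{0}_{s}$, which is not $J_{JL}$-fibrant.

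Your converse direction contains a genuine circularity. Steimle's extension theorem takes as \emph{hypothesis} that the semi-simplicial set is quasi-unital and produces a simplicial structure; it does not manufacture idempotent equivalences from the marked inner Kan condition. After your reduction to the levelwise discrete $\overline{X}$ you are left with exactly the problem you started with, only for sets: why does $(\overline{X})^{\natural}$ being marked inner Kan force an idempotent equivalence at each vertex? Neither \cite{steimle2018degeneracies} nor the identification with Henry's fibrant objects supplies this step, and you have not given an independent argument for it.

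The paper's converse avoids the reduction entirely and is much shorter. Since $X^{\natural}$ is marked inner Kan, \cref{cor:tildeU_fibrations} shows that the sub-semi-simplicial space $X^{\simeq} = \widetilde{U}(X^{\natural})$ of simplices whose edges are all equivalences is semi-Kan. Then \cref{lem:unit_is_anodyne} lets one extend any vertex $x\colon \Delta^{0}_{s} \to X^{\simeq}$ along $\Delta^{0}_{s} \to T = \cF\cL(\Delta^{0}_{s})$, and the image of the unique $2$-simplex of $T$ is the desired idempotent equivalence at $x$.
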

	\begin{proof}
		Assume first that $X$ is quasi-unital. Then $X^{\natural} \to T^{\sharp}$ has the right lifting property with respect to:
		\begin{enumerate}
			\item $d_{i}\colon \Delta^{0}_{s} \to \Delta^{1,\sharp}_{s}$ because each vertex admits an idempotent equivalence.
			\item Unmarked inner horn inclusions because $X$ is an inner Kan space.
			\item The marked outer horn inclusions by definition of equivalences.
			\item $S_{2/6}$ by \cref{cor:2-out-of-6_for_eqs}.
		\end{enumerate}
		For the converse, it follows from \cref{cor:tildeU_fibrations} that the subspace
		\begin{align*}
			X^{\simeq}\coloneq \widetilde{U}(X^{\natural}) \subset U(X^{\natural}) = X
		\end{align*}
		of simplices with all edges equivalences is an inner Kan space. The map $\Delta^{0}\to T$ is anodyne by \cref{lem:unit_is_anodyne}, so every vertex $x\colon \Delta^{0}_{s} \to X$ extends to a map $T \to X^{\simeq}$, whose value at $\Delta^{2}_{s}$ exhibits an idempotent equivalence at $x$.
	\end{proof}

	\begin{prop}\label{prop:IK_+=IK_qu}
		The functors $U$ and $(-)^{\natural}$ restrict to inverse equivalences $\IK_{+} \simeq \IK_{qu}$. 
	\end{prop}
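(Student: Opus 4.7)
The plan is to reduce everything to showing that for every $X \in \IK_{+}$, one has $(U(X))^{\natural} = X$ as marked semi-simplicial spaces; equivalently, that the marked edges of $X$ are precisely the equivalences in $U(X)$. The forward inclusion is \cref{rem:marked=>eq}. With this equality established, everything else follows formally: $(-)^{\natural}\colon \IK_{qu} \to \IK_{+}$ is already well-defined by \cref{lem:qu_iff_enough_eqs} and \cref{cor:qu_iff_preserves_eqs}; having $(U(X))^{\natural}=X$ ensures that $U(X)$ is quasi-unital and that any $F\colon X \to Y$ in $\IK_{+}$ preserves equivalences (since it preserves marked edges), so $U$ lands in $\IK_{qu}$; finally, $U \circ (-)^{\natural} = \mathrm{id}_{\IK_{qu}}$ is tautological while $(-)^{\natural} \circ U = \mathrm{id}_{\IK_{+}}$ is exactly the content of $(U(X))^{\natural} = X$.

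The bulk of the argument is therefore to produce enough marked edges. First I would construct, for each vertex $x \in X_{0}$, a marked idempotent equivalence at $x$. Using the trivial fibration $u_{X}\colon X \to \cT(X)$ of \cref{lem:u_is_trivial_fib} and the fact that $\cT(X)$ is a complete semi-Segal space (so that $\cT(X)(\Delta^{n,\sharp}_{s})$ is constantly $\cT(X)_{0}$), the vertex $u_{X}(x)$ admits a fully marked $2$-simplex $H_{0} \in \cT(X)_{2}$ whose three vertices are all $u_{X}(x)$ and whose three edges equal a single marked loop $e_{0}$. Applying the right lifting property of $u_{X}$ against the generators of $I_{JL}$: first lift $e_{0}$ to an edge $e\colon x \to x$ in $X$ (using $\del \Delta^{1,\flat}_{s} \to \Delta^{1,\flat}_{s}$), then promote $e$ to being marked (using $\Delta^{1,\flat}_{s} \to \Delta^{1,\sharp}_{s}$), and finally lift $H_{0}$ to a $2$-simplex $H \in X_{2}$ with boundary $(e,e,e)$ (using $\del \Delta^{2,\flat}_{s} \to \Delta^{2,\flat}_{s}$). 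Then $H$ witnesses $e$ as idempotent, and $e$ is an equivalence by \cref{rem:marked=>eq}.

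With marked idempotent equivalences in hand at every vertex, I then verify that every equivalence $f\colon x \to y$ in $U(X)$ is marked. Choose marked idempotent equivalences $sx, sy$ at $x, y$ from the construction above. Applying \cref{lem:eqs_iff_invertible} in $U(X)$ produces a $3$-simplex $H' \in X_{3}$ whose edges $\{0,2\}$ and $\{1,3\}$ are $sx$ and $sy$ (both marked) and whose edge $\{0,1\}$ is $f$. Thus $H'$ defines a map $(\Delta^{3}_{s}, \{\{0,2\},\{1,3\}\}) \to X$, which extends to $\Delta^{3,\sharp}_{s} \to X$ by the $S_{2/6}$ lifting property of $X \in \IK_{+}$. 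This extension marks all six edges of $H'$, in particular $f$.

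The main obstacle is the very first step: producing a \emph{marked} idempotent equivalence at each vertex. This is where the interplay between $\cT(X)$ (whose completeness supplies enough marked structure to begin with) and the trivial fibration $u_{X}$ (which transports that structure back to $X$) is crucial. The second step, equivalence $\Rightarrow$ marked, is then a clean application of \cref{lem:eqs_iff_invertible} and the $S_{2/6}$ axiom---the latter being precisely why the definition of $\IK_{+}$ includes $2$-out-of-$6$ rather than merely $2$-out-of-$3$.
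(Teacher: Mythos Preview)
Your proposal is correct and follows the same overall strategy as the paper: reduce to showing that in any $X\in\IK_{+}$ the marked edges coincide with the equivalences of $U(X)$, then deduce the ``equivalence $\Rightarrow$ marked'' implication by producing marked idempotent equivalences at each vertex and invoking \cref{lem:eqs_iff_invertible} together with the $S_{2/6}$ lifting property. The formal wrap-up via \cref{lem:qu_iff_enough_eqs} and \cref{cor:qu_iff_preserves_eqs} is also the same.

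The one genuine difference is how you manufacture the marked idempotent equivalences. The paper does this internally: by \cref{cor:tildeU_fibrations} the subspace $\widetilde{U}(X)\subset U(X)$ of fully marked simplices is semi-Kan, so \cref{lem:unit_is_anodyne} lets one extend each vertex $x\colon\Delta^{0}_{s}\to\widetilde{U}(X)$ to a map $T\to\widetilde{U}(X)$, whose $2$-simplex component is the desired marked idempotent. You instead go through the localization, using that $u_{X}\colon X\to\cT(X)$ is a marked trivial fibration (this is what the proof of \cref{lem:u_is_trivial_fib} actually shows, despite the statement saying ``trivial Kan fibration'') and that $\cT(X)$, being a complete semi-Segal space, already has fully marked $2$-simplices at each vertex to lift. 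Both routes are valid; the paper's is lighter in that it avoids invoking $\cT$ and \cref{lem:u_is_trivial_fib}, while yours makes explicit how completeness of the localization feeds back into $X$.
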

	\begin{proof}
		The key fact left to show is that if $X$ is a marked inner Kan space, then an edge $f\colon x\to y$ in $U(X)$ is an equivalence if and only if it lifts to a marked edge in $X$, in other words, that $X \simeq U(X)^{\natural}$. The if direction follows immedatly from $X$ being a marked inner Kan space, so assume that $f$ is an equivalence. By \cref{cor:tildeU_fibrations}, the subspace $\widetilde{U}(X) \subset U(X)$ of marked edges is a semi-Kan space, so using \cref{lem:unit_is_anodyne} to extend the vertices $x$ and $y$ to maps $T \to \widetilde{U}(X)$, we get marked idempotents $sx$ and $sy$ at $x$ and $y$, respectively. Because marked edges are also equivalences, we can find a $3$-simplex $H$ witnessing an inverse of $f$ with respect to $sx$ and $sy$. Now, since the collection of marked edges in $X$ has the 2-out-of-6 property, $f$ must be marked in $X$. 
		
		Combining the above with \cref{cor:qu_iff_preserves_eqs,lem:qu_iff_enough_eqs} shows that the restriction of the forgetful functor factors as $U\colon \IK_{+} \to \IK_{qu}$. Also by \cref{cor:qu_iff_preserves_eqs,lem:qu_iff_enough_eqs}, the construction $X^{\natural}$ determines a functor $(-)^{\natural}\colon \IK_{qu} \to \IK_{+}$. We clearly have a natural equivalence $id_{\IK_{qu}} \simeq U( (-)^{\natural})$, and as we argued above, $id_{\IK_{+}} \simeq U(-)^{\natural}$.
	\end{proof}
	\begin{cor}\label{cor:natural_preserves_exponential}
		For a quasi-unital inner Kan space $X$ and a semi-simplicial set $J$, the exponential $X^{J}$ is also a quasi-unital inner Kan space, and there is a natural equivalence $(X^{J})^{\natural} \simeq (X^{\natural})^{J^{\flat}}$.
	\end{cor}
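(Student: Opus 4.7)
The plan is to transport the computation to $\IK_{+}$ via the equivalence $U : \IK_{+} \leftrightarrows \IK_{qu} : (-)^{\natural}$ of \cref{prop:IK_+=IK_qu}, where exponentials are well-behaved. Since $X$ is quasi-unital, $X^{\natural}$ is a marked inner Kan space by \cref{lem:qu_iff_enough_eqs}, so $X^{\natural}\to T^{\sharp}$ is a marked inner fibration. The map $\emptyset\to J^{\flat}$ is a marked cofibration by \cref{rem:levelwise_injections}, so \cref{cor:marked_fibration_stability} yields a marked inner fibration
\[ (X^{\natural})^{J^{\flat}} \to (X^{\natural})^{\emptyset} \times_{(T^{\sharp})^{\emptyset}} (T^{\sharp})^{J^{\flat}} \, . \]
Each factor in the pullback is terminal in $ss\cS_{+}$, so $(X^{\natural})^{J^{\flat}}$ is itself a marked inner Kan space.

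Next, I would identify its underlying semi-simplicial space as $X^{J}$. For any $Y\in ss\cS$, the adjunctions $(-)^{\flat} \dashv U$ and the internal hom give
\[ \Map_{ss\cS}\bigl(Y, U((X^{\natural})^{J^{\flat}})\bigr) \simeq \Map_{ss\cS_{+}}\bigl(Y^{\flat} \otimes J^{\flat}, X^{\natural}\bigr) \, . \]
The key observation is that $Y^{\flat} \otimes J^{\flat} \simeq (Y \otimes J)^{\flat}$: since $U$ is monoidal the underlying semi-simplicial space is $Y \otimes J$, and the marking on the tensor product from \cite{harpaz2015quasi} is generated by pairs in which at least one factor contributes a marked edge, which is vacuous when both factors are flatly marked. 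Reapplying $(-)^{\flat} \dashv U$ identifies the right-hand side with $\Map(Y, X^{J})$, so Yoneda gives $U((X^{\natural})^{J^{\flat}}) \simeq X^{J}$. Because $(X^{\natural})^{J^{\flat}}$ is a marked inner Kan space, \cref{prop:IK_+=IK_qu} then shows $X^{J}$ is a quasi-unital inner Kan space.

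Finally, the identification $W \simeq U(W)^{\natural}$ valid for any marked inner Kan space $W$, which is established inside the proof of \cref{prop:IK_+=IK_qu}, applied to $W = (X^{\natural})^{J^{\flat}}$, gives the desired natural equivalence $(X^{\natural})^{J^{\flat}} \simeq (X^{J})^{\natural}$; naturality in $X$ (along quasi-unital maps) and in $J$ (along maps of semi-simplicial sets) is inherited from each step. The one technical point warranting care is the flatness of $Y^{\flat} \otimes J^{\flat}$, which relies on the explicit description of the marking in \cite{harpaz2015quasi}; once this is granted, the rest of the argument is formal.
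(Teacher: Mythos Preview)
Your proposal is correct and follows essentially the same approach as the paper. Both arguments reduce to showing that $(X^{\natural})^{J^{\flat}}$ is a marked inner Kan space (via \cref{cor:marked_fibration_stability}) with underlying semi-simplicial space $X^{J}$, and the key input for the latter is precisely the monoidality of $(-)^{\flat}$, i.e.\ $Y^{\flat}\otimes J^{\flat}\simeq (Y\otimes J)^{\flat}$; the paper packages your Yoneda computation as a uniqueness-of-right-adjoints square, but the content is identical.
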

	\begin{proof}
		By \cref{prop:IK_+=IK_qu,cor:marked_fibration_stability},  $(X^{\natural})^{J^{\flat}}$ is a marked inner Kan space. By \cref{prop:IK_+=IK_qu}, it therefore suffices to show that the underlying semi-simplicial space of $(X^{\natural})^{J^{\flat}}$ is $X^{J}$. The left adjoint $(-)^{\flat}$ is strongly monoidal with respect to $\otimes$, giving a commutative square
		\begin{equation*}
			\begin{tikzcd}
				ss\cS \arrow[r, "(-)^{\flat}"] \arrow[d, "-\otimes J"']      &   ss\cS_{+}  \arrow[d, "-\otimes J^{\flat}"] \\
				ss\cS  \arrow[r, "(-)^{\flat}"']               &   ss\cS_{+} \, .
			\end{tikzcd}
		\end{equation*}
		By uniqueness of right adjoints, we therefore get a commutative square
		\begin{equation*}
			\begin{tikzcd}
				ss\cS     & \arrow[l, "U"']   ss\cS_{+} \\
				ss\cS \arrow[u, "(-)^{J}"],  &\arrow[l, "U"]   \arrow[u, "(-)^{J^{\flat}}"']               ss\cS_{+} \, ,
			\end{tikzcd}
		\end{equation*}
		which when applied at $X^{\natural}$ gives 
		\begin{align*}
			U( (X^{\natural})^{J^{\flat}} ) \simeq U(X^{\natural})^{J} \simeq X^{J} \, . & \qedhere
		\end{align*}
	\end{proof}
	\begin{cor}\label{cor:natural_preserves_slices}
		For a quasi-unital inner Kan space $X$, a semi-simplicial set $J$ and a map $p\colon J\to X$, let $\overline{p}$ denote the unique map  $J^{\flat} \to X^{\natural}$ lifting $p$. Then $X_{p/}$ is also a quasi-unital inner Kan space, and there is a natural equivalence $(X_{p/})^{\natural} \simeq (X^{\natural})_{\overline{p}/}$.
	\end{cor}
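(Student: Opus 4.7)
The plan is to mirror the strategy of \cref{cor:natural_preserves_exponential}, but with the join $\star$ playing the role of the geometric product $\otimes$. The key observation I would verify first is that the left adjoint $(-)^{\flat}$ is strongly monoidal with respect to $\star$: using the explicit formula \cref{eq:join_formula} and the recipe for marking joins, when both input markings are empty the induced marking on $J\star K$ is also empty, giving a natural identification $(J\star K)^{\flat} \simeq J^{\flat}\star K^{\flat}$. This produces a commutative square
\begin{equation*}
    \begin{tikzcd}
        ss\cS \arrow[r, "(-)^{\flat}"] \arrow[d, "J\star -"'] & ss\cS_{+} \arrow[d, "J^{\flat}\star -"] \\
        ss\cS_{J/} \arrow[r, "(-)^{\flat}"'] & (ss\cS_{+})_{J^{\flat}/}\, .
    \end{tikzcd}
\end{equation*}

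Next, passing to right adjoints (with $U$ right adjoint to $(-)^{\flat}$ and $(-)_{p/}$ right adjoint to $J\star -$) and using uniqueness of adjoints, I would deduce the dual commutative square
\begin{equation*}
    \begin{tikzcd}
        ss\cS & \arrow[l, "U"'] ss\cS_{+} \\
        ss\cS_{J/} \arrow[u, "(-)_{p/}"] & \arrow[l, "U"] \arrow[u, "(-)_{\overline{p}/}"'] (ss\cS_{+})_{J^{\flat}/}\, .
    \end{tikzcd}
\end{equation*}
Evaluated at $X^{\natural}$ with the map $\overline{p}\colon J^{\flat}\to X^{\natural}$ (whose existence and uniqueness comes from the adjunction $(-)^{\flat}\dashv U$ applied to $p\colon J\to X \simeq U(X^{\natural})$), this square gives a natural equivalence $U((X^{\natural})_{\overline{p}/}) \simeq U(X^{\natural})_{p/} \simeq X_{p/}$.

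Finally, I would invoke \cref{cor:inner_kan_slice} to conclude that $(X^{\natural})_{\overline{p}/}$ is a marked inner Kan space, and then apply \cref{prop:IK_+=IK_qu}: its underlying semi-simplicial space $X_{p/}$ is therefore a quasi-unital inner Kan space, and moreover $X_{p/}^{\natural} \simeq (U((X^{\natural})_{\overline{p}/}))^{\natural} \simeq (X^{\natural})_{\overline{p}/}$, establishing the claimed natural equivalence. I do not anticipate any real obstacle here; the monoidality of $(-)^{\flat}$ with respect to $\star$ is immediate from the definition of the marking on a join, and all the substantive content has already been packaged into \cref{cor:inner_kan_slice} and \cref{prop:IK_+=IK_qu}.
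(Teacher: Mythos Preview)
Your proposal is correct and follows essentially the same approach as the paper: both argue that $(-)^{\flat}$ is strongly monoidal for $\star$, pass to right adjoints to identify $U((X^{\natural})_{\overline{p}/}) \simeq X_{p/}$, and then invoke \cref{prop:IK_+=IK_qu}. The only cosmetic difference is that the paper cites \cref{cor:left_fibration_stability} directly (rather than \cref{cor:inner_kan_slice}, which wraps it) to see that $(X^{\natural})_{\overline{p}/}$ is a marked inner Kan space.
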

	\begin{proof}
		By \cref{prop:IK_+=IK_qu}, $X^{\natural}$ is a marked inner Kan space, and by \cref{cor:left_fibration_stability}, $(X^{\natural})_{\overline{p}/} \to X^{\natural}$ is a marked left fibration, so $(X^{\natural})_{\overline{p}/}$ is also a marked inner Kan space. By \cref{prop:IK_+=IK_qu}, it therefore suffices to show that the underlying semi-simplicial space of $(X^{\natural})_{\overline{p}/}$ is $X_{p/}$. The left adjoint $(-)^{\flat}$ is strongly monoidal with respect to $\join$, giving a commutative square
		\begin{equation*}
			\begin{tikzcd}
				ss\cS \arrow[r, "(-)^{\flat}"] \arrow[d, "-\join J"']      &   ss\cS_{+} \arrow[d, "-\join J^{\flat}"] \\
				ss\cS_{J/}  \arrow[r, "(-)^{\flat}"']               &   (ss\cS_{+})_{J^{\flat}/} \, .
			\end{tikzcd}
		\end{equation*}
		By uniqueness of right adjoints, we therefore get a commutative square
		\begin{equation*}
			\begin{tikzcd}
				ss\cS     & \arrow[l, "U"']   ss\cS_{+} \\
				ss\cS_{J/} \arrow[u],  &\arrow[l, "U"]   \arrow[u]               (ss\cS_{+})_{J^{\flat}/} \, ,
			\end{tikzcd}
		\end{equation*}
		where the vertical maps compute slices. When applied at $\overline{p}\colon J^{\flat} \to X^{\natural}$, this gives the desired equivalence
		\begin{align*}
			U( (X^{\natural})_{\overline{p}/} ) \simeq U(X^{\natural})_{p/} \simeq X_{p/} \, . & \qedhere
		\end{align*}
	\end{proof}
	The last ingredient we need for the proof of \cref{thm:IK+_model} is that $(-)^{\natural}$ preserves (trivial) fibrations. We start with a preliminary lemma.
	\begin{lem}\label{lem:qu_triv_fib_2/3}
		Let $X\xrightarrow{f} Y \xrightarrow{g} Z$ be inner fibrations between quasi-unital inner Kan spaces. Then if any two of $f,g$ and $g\circ f$ are trivial fibrations, then so is the third.
	\end{lem}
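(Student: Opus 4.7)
The proof splits into three cases. If $f$ and $g$ are both trivial fibrations, then so is $g\circ f$ by closure of $I_{KQ}$-fibrations under composition (\cref{rem:closure_of_fibrations}). If $f$ and $g\circ f$ are trivial fibrations, then $g$ is trivial by the argument of \cref{lem:trivial_fib_cancellation}, which transfers verbatim to the unmarked setting. The substantive case is the third: assuming $g$ and $g\circ f$ are trivial fibrations, we show $f$ is.

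My strategy is to pass to the marked setting and exploit the $\CsSS$-localization $\cT$. Since trivial fibrations reflect and preserve equivalences by \cref{cor:trivial_fibration_is_conservative}, chasing through $g$ and $g\circ f$ forces $f$ to preserve equivalences too: an edge $e\in X_{1}$ is an equivalence iff $g f(e)$ is iff $f(e)$ is. By \cref{prop:IK_+=IK_qu}, $f$ therefore lifts to $f^{\natural}\colon X^{\natural}\to Y^{\natural}$ in $\IK_{+}$, with analogous lifts of $g$ and $g\circ f$. These lifts of $g$ and $g\circ f$ are marked trivial fibrations in $\IK_{+}$: the unmarked boundary lifting transfers directly, and the lifting against $\Delta^{1,\flat}_{s}\to\Delta^{1,\sharp}_{s}$ is exactly the detection of equivalences established above. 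Applying $\cT$ and using \cref{lem:trivial_fibration=>eq} together with 2-out-of-3 for $\CsSS$-equivalences, I conclude that $\cT(f^{\natural})$ is an equivalence in $\CsSS$.

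It remains to show that a marked inner fibration between marked inner Kan spaces with $\cT$-image an equivalence is a marked trivial fibration. The marking-lift $\Delta^{1,\flat}_{s}\to\Delta^{1,\sharp}_{s}$ is already covered by the equivalence-reflection argument. For the boundary lifts, I would solve each lifting problem directly: given a square with top $a\colon\del\Delta^{n,\flat}_{s}\to X^{\natural}$ and bottom $b\colon\Delta^{n,\flat}_{s}\to Y^{\natural}$, first use the trivial fibration property of $g^{\natural}\circ f^{\natural}$ to obtain a preliminary lift $c_{0}\colon\Delta^{n,\flat}_{s}\to X^{\natural}$ extending $a$ with $gfc_{0}=gb$. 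Then $fc_{0}$ and $b$ are two $n$-simplices of $Y^{\natural}$ with the same boundary and the same $g$-image. I would construct an $(n+1)$-simplex $H$ in $Y^{\natural}$ having these two simplices as adjacent faces, with the remaining faces built from idempotent equivalences available from quasi-unitality of $Y$, chosen so that $gH$ is assembled from idempotent equivalences in $Z$. Concretely, first fill the appropriate marked horn in $Z$ using quasi-unitality of $Z$ and then lift it along $g^{\natural}$ using the marked trivial fibration property of $g^{\natural}$ together with \cref{cor:tildeU_fibrations}. A final lift of $H$ through $f^{\natural}$, along an inclusion that \cref{lem:marked_inner_anodyne_stability} ensures is marked inner anodyne, yields a higher simplex whose relevant face is the desired $c$.

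The main obstacle is engineering $H$ so that enough of its edges are marked (equivalences) to make the lifting inclusion into $\Delta^{n+1,\flat}_{s}$ marked inner anodyne rather than merely inner anodyne. This bookkeeping is what makes the quasi-unitality of $Y$ and $Z$ essential: the idempotent equivalences provide the required marked edges. An alternative, more abstract strategy would be to establish directly that a marked inner fibration $p$ with $\cT(p)$ an equivalence admits the relevant lifts by unwinding \cref{cor:T(X)(J)}: $\cT(p)$ being an equivalence implies $\Vert\widetilde{U}(p^{K})\Vert$ is an equivalence for any marked semi-simplicial set $K$, and combined with the Kan fibration property of $\widetilde{U}(p^{K})$ from \cref{cor:tildeU_fibrations}, this should yield surjectivity on simplices via a realization argument in the spirit of the proof of \cref{lem:u_is_trivial_fib}.
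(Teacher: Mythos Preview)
Your handling of the first two cases is correct, but the third case has a genuine gap that is partly circular with the paper's logical structure and partly just incomplete.

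The circularity issue: you reduce to showing that ``a marked inner fibration between marked inner Kan spaces with $\cT$-image an equivalence is a marked trivial fibration,'' and your concrete argument for the boundary lift ends with ``a final lift of $H$ through $f^{\natural}$, along an inclusion that \cref{lem:marked_inner_anodyne_stability} ensures is marked inner anodyne.'' Both of these presuppose that $f^{\natural}$ is a \emph{marked} inner fibration. But $f$ is only assumed to be an unmarked inner fibration, and the statement that $(-)^{\natural}$ takes inner fibrations to marked inner fibrations is precisely \cref{cor:natural_preserves_fibrations}, whose proof in the paper \emph{uses} \cref{lem:qu_triv_fib_2/3}. So you cannot invoke it here. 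You do get lifting against unmarked inner horns for free, but the marked outer horn lifts are exactly what is at stake.

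Even ignoring that, neither the ``engineer $H$'' sketch nor the realization-based alternative is a proof: you acknowledge the bookkeeping obstacle in the first and only gesture at the second. The realization argument would need to go from $\Vert\widetilde U(p^{K})\Vert$ being an equivalence to actual levelwise surjectivity statements about $p$, which is more delicate than what \cref{lem:u_is_trivial_fib} does.

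For comparison, the paper avoids all of this by reducing to the Joyal model structure. Using \cref{lem:simplicial_set_replacement} and \cref{cor:trivial_fibration_preserves_quasi_unitality}, one replaces $X,Y,Z$ by levelwise discrete quasi-unital inner Kan spaces $\overline X,\overline Y,\overline Z$ connected by trivial fibrations. Steimle's theorem then supplies compatible simplicial structures, yielding inner fibrations $f',g'$ of quasi-categories with $g'$ and $g'f'$ trivial fibrations. In the Joyal model structure this forces $f'$ to be a trivial fibration (2-out-of-3 for weak equivalences plus ``fibration and weak equivalence $\Rightarrow$ trivial fibration''), and one transports back via \cref{lem:trivial_fib_cancellation}. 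This route uses only unmarked data and results already available at this point in the paper.
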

	\begin{proof}
		Fibrations are closed under composition, and the case where $f$ and $g\circ f$ are trivial follows from \cref{lem:trivial_fib_cancellation}. Successively use \cref{lem:simplicial_set_replacement,cor:trivial_fibration_preserves_quasi_unitality} to replace $X, Y$ and $Z$ up to trivial fibration by levelwise discrete, quasi-unital inner Kan spaces $\overline{X}, \overline{Y}$ and $\overline{Z}$. Then, successively use the main result of \cite{steimle2018degeneracies} to pick compatible simplicial structures, producing a diagram
		\begin{equation*}
			\begin{tikzcd}
				\cF(X') \arrow[r]  \arrow[d, "\cF(f')"']      &   X  \arrow[d,"f"] \\
				\cF(Y')  \arrow[r] \arrow[d, "\cF(g')"']      &   Y \arrow[d,"g"]  \\
				\cF(Z')  \arrow[r]                &   Z
			\end{tikzcd}
		\end{equation*}
		where $X', Y'$ and $Z'$ are quasi-categories, the horizontal maps as well as $g'$ and $g'\circ f'$ are trivial fibrations, and $f'$ is an inner fibration.
		By 2-out-of-3 for weak equivalences, $f'$ is also a weak equivalence in the quasi-category model structure on $s\Set$. In a model category, a morphism is a trivial fibration if and only if it is a weak equivalence and a fibration, so $f'$ is a trivial fibration, and therefore so is $\cF(f')$. Now apply \cref{lem:trivial_fib_cancellation} to the top square to see that $f$ is a trivial fibration.
	\end{proof}
	
	\begin{cor}\label{cor:natural_preserves_trivial_fibrations}
		The functor $(-)^{\natural}\colon \IK_{qu} \to \IK_{+}$ preserves trivial fibrations.
	\end{cor}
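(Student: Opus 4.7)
The plan is to verify the right lifting property of $f^{\natural}$ against both classes of generators in $I_{JL}$ directly. Let $f\colon X\to Y$ be a trivial fibration in $\IK_{qu}$ (i.e.\ an $I_{KQ}$-fibration between quasi-unital inner Kan spaces); by \cref{prop:IK_+=IK_qu}, $f$ is quasi-unital and so lifts uniquely to a map $f^{\natural}\colon X^{\natural}\to Y^{\natural}$ between marked inner Kan spaces. We then treat the two generators of $I_{JL}$ separately.

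For the boundary inclusions $\del\Delta^{n,\flat}_s\to\Delta^{n,\flat}_s$, neither side carries any markings, so a lifting problem for $f^{\natural}$ in $ss\cS_+$ is precisely the data of a lifting problem for $f$ in $ss\cS$. Since $f$ is assumed to be an $I_{KQ}$-fibration, such lifts exist on the nose.

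The genuine content lies in the remaining generator $\Delta^{1,\flat}_s\to\Delta^{1,\sharp}_s$. A lifting problem here corresponds to an edge $e\colon x\to y$ of $X$ whose image $f(e)$ is marked in $Y^{\natural}$, i.e.\ an equivalence in $Y$; producing the dashed lift amounts to showing that $e$ is itself an equivalence in $X$, so that it lies in the marking of $X^{\natural}$. But this conservativity property of trivial fibrations is exactly \cref{cor:trivial_fibration_is_conservative}, so the lift exists. (Uniqueness of the lift, i.e.\ that we actually have the right lifting property and not just a weaker surjectivity statement, is automatic since $X_1^{eq}\subset X_1$ is a monomorphism by \cref{rem:equivalences_is_subspace}.)

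The only potential subtlety is confirming that the trivial fibration condition on $f^{\natural}$ really does reduce cleanly to these two generator types; this is immediate from \cref{def:generating_marked_cofibrations}. There is no separate hard step: the whole argument is essentially a repackaging of \cref{cor:trivial_fibration_is_conservative} together with the observation that the marking on $X^{\natural}$ is defined exactly by the equivalences.
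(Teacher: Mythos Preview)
Your argument is correct and follows the same route as the paper: check the two generator types in $I_{JL}$ separately, with the boundary inclusions handled by adjunction (equivalently, by noting they involve no markings) and the marking generator $\Delta^{1,\flat}_s\to\Delta^{1,\sharp}_s$ handled by \cref{cor:trivial_fibration_is_conservative}. Your parenthetical about uniqueness is slightly misphrased---the right lifting property only asks for an epimorphism, not uniqueness---but the underlying observation (that $X_1^{eq}$ and $X_1\times_{Y_1}Y_1^{eq}$ are both subspaces of $X_1$ and the corollary identifies them) is exactly what makes the argument go through cleanly.
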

	\begin{proof}
		Let $X\to Y$ be a trivial fibration. Then $X^{\natural} \to Y^{\natural}$ has the right lifting property with respect to the unmarked boundary inclusions by adjunction. Lifting against $\Delta^{1,\flat}_{s} \to \Delta^{1, \sharp}_{s}$ follows immedeatly from \cref{cor:trivial_fibration_is_conservative}.
	\end{proof}
	
	\begin{cor}\label{cor:natural_preserves_fibrations}
		The functor $(-)^{\natural}\colon \IK_{qu} \to \IK_{+}$ takes inner fibrations to marked inner fibrations.
	\end{cor}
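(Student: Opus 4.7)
The plan is to verify the right lifting property of $p^{\natural}$ against each of the three classes of morphisms in $J_{JL}$: unmarked inner horns, the map $S_{2/6}$, and admissible marked outer horns. The first two classes admit direct arguments, while the case of outer horns will be the main step and will be handled by reducing to the analogous lifting property for inner fibrations of quasi-categories.

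For an unmarked inner horn inclusion the associated lifting problem is the same at the level of underlying semi-simplicial spaces as a lifting problem for the inner fibration $p$, and any such lift is automatically a valid map of marked objects since the source marking is empty. For the map $S_{2/6}$, which is the identity on underlying semi-simplicial spaces, the candidate lift is the input $3$-simplex $H \in X$; one then checks that every edge of $H$ is an equivalence in $X$ — this follows from \cref{cor:2-out-of-6_for_eqs}, since $H$ exhibits the marked edges $\{0,2\}$ and $\{1,3\}$ together with the remaining four edges as fitting into the 2-out-of-6 pattern.

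The main case will be that of an admissible outer horn, say $(\Horn^n_{0,s}, \{0,1\}) \to (\Delta^n_s, \{0,1\})$, the right-horn case being symmetric. The strategy is to transport the lifting problem along a chain of trivial fibrations into the quasi-category setting and apply a standard result there. Following the reduction used in the proof of \cref{lem:qu_triv_fib_2/3}, I will first replace $Y$ by a semi-simplicial set $\overline{Y}$ via \cref{lem:simplicial_set_replacement}, pull $X$ back along $\overline{Y}\to Y$, and then apply \cref{lem:simplicial_set_replacement} again to obtain an inner fibration $\overline{p}\colon \overline{X}\to \overline{Y}$ between quasi-unital inner Kan semi-simplicial sets (using \cref{cor:trivial_fibration_preserves_quasi_unitality}). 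Applying the main theorem of \cite{steimle2018degeneracies} first to $\overline{Y}$ and then compatibly to $\overline{X}$ realizes $\overline{p}$ as an inner fibration of quasi-categories. By \cref{cor:trivial_fibration_is_conservative} and \cref{lem:equivalence_in_quasi-category}, the input's marked first edge transports to an invertible edge in the quasi-category $\overline{X}$, so the standard lifting result for inner fibrations of quasi-categories (\cite{HTT} 2.4.1.8) solves the lifted problem. Pushing the resulting lift forward along $\overline{X}\to X$ gives a lift of the original problem whose first edge coincides with that of the input horn, hence is still an equivalence, so the lift respects the markings.

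The main obstacle will be the compatible choice of simplicial structures on $\overline{X}$ and $\overline{Y}$: one needs Steimle's theorem not only in its absolute form but in a form allowing simplicial structures on source and target of $\overline{p}$ to be chosen compatibly. Since exactly this relative version is already implicitly invoked in the proof of \cref{lem:qu_triv_fib_2/3}, the same reduction will apply here.
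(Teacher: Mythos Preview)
Your argument is correct. The treatment of unmarked inner horns and of $S_{2/6}$ matches the paper's. For the admissible outer horns, however, your route and the paper's diverge.

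The paper does not transport the individual lifting problem to quasi-categories. Instead, it uses the join/slice adjunction to rewrite the outer-horn lifting problem as a lifting problem of $\del\Delta^{n-2}_{s}\to\Delta^{n-2}_{s}$ against the map
\[
a\colon X_{f/}\longrightarrow Y_{p(f)/}\timest_{Y_{p(x)/}}X_{x/}\, ,
\]
and then shows that $a$ is a trivial fibration. This is done by placing $a$ in a triangle with $b'\colon Y_{p(f)/}\timest_{Y_{p(x)/}}X_{x/}\to X_{x/}$ (a pullback of $Y_{p(f)/}\to Y_{p(x)/}$, which is trivial by \cref{lem:eq_fibration_condition} since $p(f)$ is an equivalence) and $c\colon X_{f/}\to X_{x/}$ (trivial for the same reason, applied to $f$), and then invoking \cref{lem:qu_triv_fib_2/3}. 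So the paper uses \cref{lem:qu_triv_fib_2/3} as a black box, whereas you reuse its \emph{proof technique} --- the replacement by levelwise discrete objects and compatible simplicial structures --- directly on the horn-filling problem.

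Both arguments ultimately rest on the same ingredients (Steimle's relative extension result and the lifting behaviour of equivalences in quasi-categories). The paper's version is more structural: it isolates the statement that $a$ is a trivial fibration, which is a reusable fact about slices, and it avoids tracking a specific horn through a chain of replacements. Your version is more hands-on and avoids the slice adjunction entirely, at the cost of having to verify that the lifted problem and its solution are compatible with each replacement step. Your acknowledgement that the relative form of Steimle's theorem is the crux, and that it is already used in \cref{lem:qu_triv_fib_2/3}, is exactly right.
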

	\begin{proof}
		Let $X\to Y$ be a quasi-unital inner fibration between quasi-unital inner Kan spaces. Then $X^{\natural} \to Y^{\natural}$ has the right lifting property against the unmarked inner horns by adjunction, and against $S_{2/6}$ by \cref{cor:2-out-of-6_for_eqs}. Now the lifting problem
		\begin{equation*}
			\begin{tikzcd}
				(\Horn^{n}_{0,s}, \left\{ 0,1 \right\})  \arrow[r, "H"] \arrow[d]      &   X^{\natural}  \arrow[d] \\
				(\Delta^{n}, \left\{ 0,1 \right\})  \arrow[r]  \arrow[ur, dashed]            &   Y^{\natural}
			\end{tikzcd}
		\end{equation*}
		is equivalent by adjunction to
		\begin{equation*}
			\begin{tikzcd}
				\del \Delta^{n-2} \arrow[r] \arrow[d]      &   X_{f/} \arrow[d, "a"] \\
				\Delta^{n-2}  \arrow[r]  \arrow[ur, dashed]             &   Y_{p(f)/} \timest_{Y_{p(x)/}} X_{x/} \, ,
			\end{tikzcd}
		\end{equation*}
		where $f\colon x\to y$ is the edge $H(\left\{ 0,1\right\})$. It therefore suffices to show that the map $a$ is a trivial fibration. By construction of $X^{\natural}$ and $Y^{\natural}$, the edges $f$ and $p(f)$ are equivalences in $X$ and $Y$, respectively, so we have a diagram
		\begin{equation*}
			\begin{tikzcd}
				X_{f/} \arrow[r, "a"] \arrow[dr, "c"']  &
				Y_{p(f)/} \timest_{Y_{p(x)/}} X_{x/} \arrow[r]  \arrow[d, "b'"] & Y_{p(f)/} \arrow[d, "b"] \\
				& X_{x/} \arrow[r] & Y_{p(x)/} \, ,
			\end{tikzcd}
		\end{equation*}
		where the bottom square is a pullback, and the maps $b$ and $c$ are trivial fibrations. By pullback stability, $b'$ is also a trivial fibration. By \cref{cor:trivial_fibration_preserves_quasi_unitality,cor:natural_preserves_slices}, all the objects in this diagram are quasi-unital inner Kan spaces, so we can apply \cref{lem:qu_triv_fib_2/3} to the left triangle to conclude that $a$ is also a trivial fibration.
	\end{proof}

	\begin{proof}[Proof of \cref{thm:IK+_model}]
		The localization statement follows by \cref{cor:T_characterization,cor:T_is_L}, and from the equivalence $\IK_{+} \simeq \IK_{qu}$ preserving trivial fibrations. The preservation of fibrant limits follows from \cref{lem:inner_fibration_pullback,cor:inverse_limit_marked,cor:natural_preserves_fibrations}. Preservation of exponentials follows from \cref{cor:natural_preserves_exponential,cor:RT(X)^J}. Preservation of slices follows from \cref{cor:natural_preserves_slices,cor:slice_comparison}.
	\end{proof}

	We now consider a sufficient condition for quasi-unitality inspired by \cite{AB}. We begin by formulating the precise coherence required for outer degeneracies of semi-simplicial spaces.
	\begin{definition}\label{def:outer_degeneracies}
		We say that a semi-simplicial space $X$ \emph{admits initial and terminal degeneracies} if there exist commutative diagrams of augmented semi-simplicial spaces
		\begin{equation*}
			\begin{tikzcd}
				X_{\Delta_{s}^{0}/} \arrow[r, "s_0"] \arrow[dr, "id"']      &   X_{\Delta_{s}^{1}/}  \arrow[d, "d_{i}"] \\
				&   X_{\Delta_{s}^{0}/}
			\end{tikzcd}
			\qquad
			\begin{tikzcd}
				X_{/\Delta_{s}^{0}} \arrow[r, "s_{\omega}"] \arrow[dr, "id"']      &   X_{/\Delta_{s}^{1}}  \arrow[d, "d_{i}"] \\
				&   X_{/\Delta_{s}^{0}} \, ,
			\end{tikzcd}
		\end{equation*}
		where the maps $\Delta$ are restriction along the unique map $\del \Delta^{1}_{s} \to \Delta^{0}_{s}$,
		such that for every vertex $x\in X_{0}$, the edges $s_{0}x$ and $s_{\omega}x$ belong to the same component of $X_{1}$.
	\end{definition}
	Under the inclusion $ss\cS \subset ss\cS_{a}$, we can consider trivial, inner, left and right fibrations in $ss\cS_{a}$. Because the inclusion of $ss\cS$ preserves pushouts and retracts, the corresponding collections of anodynes are preserved. Note that a map $f\colon X\to Y$ in $ss\cS_{a}$ is a fibration precisely if for every $x\in X_{-1}$, the map $X_{x} \to Y_{f(x)}$ is a fibration in $ss\cS$. We therefore have the following immediate corollary of \cref{lem:free_slice,rem:unmarked_fibration_stability}.
	\begin{cor}\label{cor:free_slice_fibration}
		Let $X\to Y$ be an inner fibration in $ss\cS$, and $j\colon J\to K$ a map of semi-simplicial sets. Then
		\begin{align*}
			X_{K/} \to X_{J/} \timest_{Y_{J/}} Y_{K/}
		\end{align*}
		is a left fibration, and if $j$ is right anodyne, a trivial fibration. 
	\end{cor}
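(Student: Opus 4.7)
The strategy is to reduce the claim to the corresponding statement for ordinary slices by working fiber by fiber over level $-1$. Recall from the discussion just before the corollary that a map in $ss\cS_{a}$ is a fibration of the relevant type precisely when, for every $x$ in the $-1$-level of its source, the induced map on fibers $X_{x} \to Y_{f(x)}$ is a fibration of that type in $ss\cS$. So the goal becomes identifying both sides of the map fiber-wise and applying the known result for ordinary slices.

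First, I would identify the fibers of both sides over level $-1$. By \cref{lem:free_slice}, the $-1$-level of $X_{K/}$ is $\Map(K,X)$, and the fiber over a map $p\colon K \to X$ is the ordinary slice $X_{p/}$ in $ss\cS$. Since evaluating at level $-1$ and taking fibers both commute with pullbacks, the $-1$-level of the target is
\[
\Map(J,X)\timest_{\Map(J,Y)} \Map(K,Y),
\]
and the fiber over the image $(p\circ j,\, f\circ p)$ of a point $p\in \Map(K,X)$ is the pullback $X_{pj/}\timest_{Y_{fpj/}} Y_{fp/}$ in $ss\cS$.

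It therefore suffices to show that for each $p\colon K \to X$ the induced map
\[
X_{p/} \to X_{pj/}\timest_{Y_{fpj/}} Y_{fp/}
\]
is a left fibration in $ss\cS$, and is a trivial fibration when $j$ is right anodyne. But this is exactly the unmarked analog of \cref{cor:left_fibration_stability} guaranteed by \cref{rem:unmarked_fibration_stability}, whose proof proceeds exactly as in the marked case by adjunction, reducing to the unmarked analog of \cref{lem:left_anodyne_stability}. Since the text frames this as an immediate corollary, I do not expect any real obstacle; the only point requiring care is checking that the fiber identifications for $X_{J/}$, $Y_{J/}$, and their pullback are compatible with the given map $\phi\colon X_{K/} \to X_{J/}\timest_{Y_{J/}} Y_{K/}$, which follows from the functoriality of the free-slice construction in both the slicing parameter and the ambient space.
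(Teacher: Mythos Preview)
Your proof is correct and is precisely the argument the paper intends: the text states the result as an immediate corollary of \cref{lem:free_slice} and \cref{rem:unmarked_fibration_stability}, and you have spelled out exactly that reduction—identify the $-1$-level and fibers via \cref{lem:free_slice}, then apply the unmarked analogue of \cref{cor:left_fibration_stability} fiberwise. The only small caveat is that the unmarked analogue of \cref{cor:left_fibration_stability} is stated for $j$ a cofibration, so strictly your argument (and the paper's) yields the left-fibration claim under that hypothesis; this is harmless since every right anodyne map, and every application in the paper, satisfies it.
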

	We now assume that $X$ admits initial degeneracy $s = s_0$. By associativity of $\join$, we have maps
	\begin{align*}
		s_{J/} \colon X_{\Delta^{0}_{s} \join J /} \simeq (X_{\Delta^{0}_{s} /})_{J/} \to (X_{\Delta^{1}_{s} /})_{J/} \simeq X_{\Delta^{1}_{s}\join J/}
	\end{align*}
	which are natural in $J$. In particular, we get maps $X_{\Delta^{n}_{s}/} \to X_{\Delta^{n+1}_{s}}$, each a section of the restriction maps $d_0$ and $d_{1}$. Consider the map
	\begin{align*}
		S\colon X_{\Delta^{1}_{s}/} \simeq X_{\Delta^{1}_{s}/} \timest_{X_{\Delta^{0}_{s}/}} X_{\Delta^{0}_{s}/} \xrightarrow{id\timest s} X_{\Delta^{1}_{s}/} \timest_{X_{\Delta^{0}_{s}/}} X_{\Delta^{1}_{s}/} \simeq X_{\Horn^{2}_{0,s}/} \, ,
	\end{align*}
	where the pullback is formed over the span $X_{\Delta^{1}_{s}/} \xrightarrow{d_0} X_{\Delta^{0}_{s}/} \xleftarrow{d_0} X_{\Delta^{1}_{s}/}$. 
	\begin{lem}\label{lem:S*X_over_delta*2}
		If $X$ is an inner Kan space which admits initial degeneracies, then the left vertical map of the pullback
		\begin{equation*}
			\begin{tikzcd}
				S^{*} X_{\Delta^{2}_{s}/} \arrow[r] \arrow[d]      &   X_{\Delta^{2}_{s}/}  \arrow[d] \\
				X_{\Delta^{1}_{s}/}  \arrow[r, "S"]               &   X_{\Horn^{2}_{0,s}/}
			\end{tikzcd}
		\end{equation*}
		has the right lifting property with respect to the unique map $\underline{\ast} \to J$ in $ss\cS_{a}$ for any semi-simplicial set $J$. 
	\end{lem}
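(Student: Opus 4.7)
The plan is to pass the lifting problem through the join--slice adjunction in $ss\cS_a$ and thereby reduce it to an extension problem for maps into $X$, which can then be solved by induction on the skeleton of $J$ using inner horn fillers combined with the initial degeneracy $s$.

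By the adjunction $(K \join -) \dashv X_{K/}$ in $ss\cS_a$, the given lifting problem translates to the following extension problem: we are given a map $e \colon \Delta^{1}_{s} \join J \to X$ and, restricted to the $\underline{\ast}$-part of $J$, an extension of $e|_{\Delta^{1}_{s} \join \underline{\ast}}$ to $\sigma_{0} \colon \Delta^{2}_{s} \join \underline{\ast} \to X$ whose restriction to $\Horn^{2}_{0,s} \join \underline{\ast}$ agrees with $S$ applied to $e|_{\Delta^{1}_{s} \join \underline{\ast}}$; we must produce $\Sigma \colon \Delta^{2}_{s} \join J \to X$ extending both $\sigma_{0}$ and $e$ (in the appropriate sense), whose restriction to $\Horn^{2}_{0,s} \join J$ matches $S(e)$.

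I will build $\Sigma$ by induction over the skeletal filtration of $J$. The attachment of a non-degenerate $n$-cell to $J$ is a pushout along $\partial\Delta^{n}_{s} \hookrightarrow \Delta^{n}_{s}$, and so via the identification $\Delta^{k}_{s} \join \Delta^{n}_{s} \simeq \Delta^{n+k+1}_{s}$ the inductive step reduces to a lifting problem against the inclusion
\[
\bigl(\Delta^{2}_{s} \join \partial\Delta^{n}_{s}\bigr) \cup \bigl(\Horn^{2}_{0,s} \join \Delta^{n}_{s}\bigr) \hookrightarrow \Delta^{n+3}_{s}.
\]
The essential point is that, by the definition of $S$ together with the naturality of $s$ as a section of $d_{0}$ and $d_{1}$, the face of $\Delta^{n+3}_{s}$ coming from the $\Horn^{2}_{0,s} \join \Delta^{n}_{s}$ part is forced to equal $s$ applied to a face of the $(n{+}2)$-simplex already determined by $e$. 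Absorbing this $s$-determined face into the subcomplex on the left converts the above inclusion into an inner horn inclusion $\Horn^{n+3}_{j,s} \hookrightarrow \Delta^{n+3}_{s}$ for a suitable inner index $j$, and the inner Kan property of $X$ then supplies the required extension.

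The main obstacle is the combinatorial bookkeeping in the last step: verifying that enlarging the subcomplex by the $s$-determined face yields exactly an inner horn $\Horn^{n+3}_{j,s}$ in $\Delta^{n+3}_{s}$. This requires tracking how the outer horn $\Horn^{2}_{0,s}$ sits inside $\Delta^{2}_{s}$ after joining with $\Delta^{n}_{s}$, and systematically using the section identities $d_{0}s = d_{1}s = \mathrm{id}$ in augmented semi-simplicial degrees to recognise the $s$-face as the unique face completing the union to an inner horn. Once this combinatorial identification is established, the lift is produced directly by the inner Kan hypothesis on $X$, and the induction closes.
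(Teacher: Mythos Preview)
Your reduction via the join--slice adjunction and the skeletal induction is sound, but the combinatorial claim at the heart of the argument is wrong. Identifying $\Delta^{2}_{s}\join\Delta^{n}_{s}$ with $\Delta^{n+3}_{s}$ so that $\Delta^{2}_{s}$ occupies vertices $\{0,1,2\}$, one computes directly that $\Delta^{2}_{s}\join\partial\Delta^{n}_{s}$ contributes exactly the faces $d_{3},\ldots,d_{n+3}$, while $\Horn^{2}_{0,s}\join\Delta^{n}_{s}$ contributes the faces $d_{1}$ and $d_{2}$. The union is therefore $\Horn^{n+3}_{0,s}$, the \emph{outer} $0$-horn. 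There is no face one can ``absorb'' to make this an inner horn: adding the single missing face $d_{0}$ yields $\partial\Delta^{n+3}_{s}$, not $\Horn^{n+3}_{j,s}$ for any inner $j$. The fact that one of the faces already present (say $d_{1}$) happens to be of the form $s(\text{something})$ does not enlarge the subcomplex, and removing it only makes the extension problem harder.

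This is precisely the obstruction the paper's proof is designed to circumvent. Rather than inducting on $J$, the paper works entirely at the level of free slices: it first factors through $X_{\Delta^{3}_{s}/}$, using that $\Horn^{2}_{0,s}\join\Delta^{0}_{s}\hookrightarrow\Delta^{3}_{s}$ is \emph{right} anodyne, and then uses the degeneracy $s$ to build an auxiliary map $T\colon X_{\Delta^{2}_{s}/}\to X_{\Horn^{2}_{0,s}\join\Delta^{0}_{s}/}$ which reduces the problem to the lifting property of $d_{2}\colon X_{\Delta^{2}_{s}/}\to X_{\Delta^{1}_{s}/}$. The latter factors through $X_{\Horn^{2}_{1,s}/}$ (an \emph{inner} horn), with the first factor a trivial fibration and the second admitting a section built from $s$. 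The essential idea you are missing is that the initial degeneracy must be used to manufacture a map that converts the outer-horn problem into an inner one \emph{before} attempting to fill; it does not let you fill the outer horn directly.
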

	\begin{proof}
		The inclusion $\Horn^{2}_{0,s}\join \Delta^{0}_{s} \simeq \Delta^{0}_{s} \join \Horn^{2}_{2,s} \to \Delta^{0}_{s} \join \Delta^{2}_{s}\simeq \Delta^{3}_{s}$ is right anodyne, so $X_{\Delta^{3}_{s}/} \to X_{\Delta^{0}_{s}\join \Horn^{2}_{2,s}/}$ is a trivial fibration by \cref{cor:free_slice_fibration}. Then, thanks to the factorization
		\begin{equation*}
			\begin{tikzcd}
				X_{\Delta^{3}_{s}/} \arrow[r, "d_{3}"] \arrow[d]      &   X_{\Delta^{2}_{s}/}  \arrow[d] \\
				X_{\Horn^{2}_{0,s}\join \Delta^{0}_{s}/}  \arrow[r, "d_{3}"]               &   X_{\Horn^{2}_{0,s}/} \, ,
			\end{tikzcd}
		\end{equation*}
		it suffices to show that $S^{*}d_{3}\colon S^{*}X_{\Horn^{2}_{0,s}\join \Delta^{0}_{s}/} \to X_{\Delta^{1}_{s}/}$ has the right lifting property with respect to $\underline{\ast} \to J$. Consider the map
		\begin{align*}
			T\colon X_{\Delta^{2}_{s}/} \simeq X_{\Delta^{2}_{s}/} \timest_{X_{\Delta^{1}_{s}/}} X_{\Delta^{1}_{s}/} \xrightarrow{id \timest s} X_{\Delta^{2}_{s}/} \timest_{X_{\Delta^{1}_{s}}/} X_{\Delta^{2}_{s}/} \simeq X_{\Horn^{2}_{0,s}\join \Delta^{0}_{s}/} \, ,
		\end{align*}
		which by naturality of $s$ fits in the following square.
		\begin{equation*}
			\begin{tikzcd}
				X_{\Delta^{2}_{s}/} \arrow[r, "T"] \arrow[d, "d_{2}"]      &   X_{\Horn^{2}_{0,s}\join \Delta^{0}_{s}/}  \arrow[d, "d_{3}"] \\
				X_{\Delta^{1}_{s}/}  \arrow[r, "S"]               &   X_{\Horn^{2}_{0,s}/}
			\end{tikzcd}
		\end{equation*}
		Because the right vertical is a trivial fibration, it suffices to show that the left vertical $d_{2}$ has the right lifting property with respect to $\underline{\ast} \to J$. This map factors as
		\begin{align*}
			X_{\Delta^{2}_{s}/} \to X_{\Horn^{2}_{1,s}/} \to X_{\Delta^{1}_{s}/} \, ,
		\end{align*}
		where the first factor is a trivial fibration by \cref{cor:free_slice_fibration}, and the second factor admits a section constructed in a similar fashion to $S$.
	\end{proof}
	\begin{lem}\label{lem:sx_is_left_eq}
		If an inner Kan space $X$ admits initial degeneracies $s$, then for any vertex $x\in X_{0}$, the map $X_{sx/} \xrightarrow{d_1} X_{x/}$ is a trivial fibration.
	\end{lem}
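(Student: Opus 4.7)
The plan is to show that $d_1 : X_{sx/} \to X_{x/}$ has the right lifting property against every boundary inclusion $\del\Delta^n_s \hookrightarrow \Delta^n_s$, by induction on $n$. Via the join--slice adjunction, solving a lifting problem $\del\Delta^n_s \to X_{sx/}$, $\Delta^n_s \to X_{x/}$ (compatible under $d_1$) amounts to extending a given horn $H \colon \Horn^{n+2}_{0, s} \to X$ with $H|_{\{0,1\}} = sx$ to a map $\Delta^{n+2}_s \to X$. Unpacking the adjunction shows that the $\Delta^1_s \join \del\Delta^n_s$ piece and the $\Delta^0_s \join \Delta^n_s$ piece together cover everything in $\Delta^{n+2}_s$ except the face $d_0$.

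For the base case $n = 0$, the horn $\Horn^2_{0,s}$ consists of the edges $sx$ and $g := d_1 H \colon x \to y$. I apply the augmented-level-$0$ component of $s \colon X_{\Delta^0_s/} \to X_{\Delta^1_s/}$, which is a map $X_1 \to X_2$, to $g$. The section property yields $d_1 s(g) = g$, and compatibility of $s$ with augmented face maps (which correspond to $d_2 \colon X_2 \to X_1$ downstairs and $d_1 \colon X_1 \to X_0$ upstairs) yields $d_2 s(g) = s(d_1 g) = s(x) = sx$. Hence $s(g)$ fills the horn, automatically supplying the missing face $d_0$.

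For $n \geq 1$, I would set $g := d_1 H \in X_{n+1}$ and consider $\widetilde \sigma := s_n(g) \in X_{n+2}$, where $s_n$ denotes the augmented-level-$n$ component of $s$. Naturality of $s$ in the slice variable $J$ of $X_{J/}$, applied to face inclusions $\Delta^{n-1}_s \hookrightarrow \Delta^n_s$, combined with the simplicial identity $d_1 d_i = d_{i-1} d_1$, gives $d_1 \widetilde\sigma = g$ and $d_i \widetilde\sigma = s_{n-1}(d_1 d_i H)$ for $i \geq 2$. These faces generally differ from the prescribed $d_i H$, but the discrepancies are controlled by \cref{lem:S*X_over_delta*2}, which provides coherent $2$-simplex fillings of left horns of the form $S(f) = (f, s(\text{source of }f))$ parametrized by any semi-simplicial set. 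Combining these fillings with the inner Kan property of $X$, I plan to iteratively correct $\widetilde\sigma$ along its faces $d_i$ so that they agree with $H$, producing the desired filler.

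The main obstacle is organizing these corrections coherently across all faces at once, since a correction at one face risks disturbing the others. This is where the parametrized content of \cref{lem:S*X_over_delta*2}---its right lifting property with respect to the maps $\underline{*} \to J$---becomes essential: it allows the needed deformations to be assembled into a single higher-dimensional filling that produces the sought-after $(n+2)$-simplex in one go.
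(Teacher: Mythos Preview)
Your base case $n=0$ is correct, but the inductive step is not a proof: you identify the discrepancy between $d_i\widetilde\sigma$ and $d_iH$ for $i\geq 2$, then appeal to \cref{lem:S*X_over_delta*2} and the inner Kan condition to ``iteratively correct'' these faces, while admitting that organizing the corrections coherently is ``the main obstacle.'' This is exactly the hard part. A correction at one face alters adjacent faces, and you give no scheme for terminating this process or for packaging it into a single higher filler. Invoking the parametrized lifting in \cref{lem:S*X_over_delta*2} is the right instinct, but you have not said against which map $\underline{\ast}\to J$ you lift, nor how the output produces the desired $(n+2)$-simplex rather than yet another approximation.

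The paper avoids this combinatorics entirely by working uniformly with the free slices $X_{\Delta^k_s/}$ rather than with individual simplices, and it does not induct on $n$. Given a cofibration $J\to K$, the lifting problem is translated to one in pointed augmented semi-simplicial spaces with $J\to X_{\Delta^1_s/}$ and $K\to X_{\Delta^0_s/}$, basepoints $sx$ and $x$. The basepoint $s^2x$ lands in $S^*X_{\Delta^2_s/}$, and \cref{lem:S*X_over_delta*2} is applied once, to lift along $\underline{\ast}\to J$, producing $J\to X_{\Delta^2_s/}$ in one stroke---not as a tool for repairing a bad guess. A second lift, of $K$ into $X_{\Delta^2_s/}$, then comes for free from \cref{cor:free_slice_fibration} because $d_1\colon \Delta^1_s\to\Delta^2_s$ is right anodyne. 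Finally one pushes forward along $d_2$ and uses that $d_2\circ S$ and $d_0\circ s$ are homotopic to the identity to recover a solution of the original problem. The moral is that the coherence you were struggling to arrange by hand is already encoded in the free-slice maps $s$, $S$, and $s^2$, and the two lifts above exploit that encoding directly.
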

	\begin{proof}
		Let $J\to K$ be a levelwise injection between semi-simplicial sets. By adjunction, the lifting problem 
		\begin{equation*}
			\begin{tikzcd}
				J \arrow[r] \arrow[d]      &   X_{sx/}  \arrow[d, "d_1"] \\
				K  \arrow[r] \arrow[ur, dashed]               &   X_{x/}
			\end{tikzcd}
		\end{equation*}
		in $ss\cS$ is equivalent to 
		\begin{equation}\label{eq:sx_is_left_eq_1}
			\begin{tikzcd}
				J \arrow[r] \arrow[d]      &   X_{\Delta^{1}_{s}/}  \arrow[d, "d_1"] \\
				K  \arrow[r]  \arrow[ur, dashed]              &   X_{\Delta^{0}_{s}/}
			\end{tikzcd}
		\end{equation}
		in $(ss\cS_{a})_{\ast}$, where $X_{\Delta^{1}_{s}/}$ and $X_{\Delta^{0}_{s}/}$ are pointed by $sx$ and $x$, respectively. Using naturality of $s$, we get that the composite map
		\begin{align*}
			X_{\Delta^{0}_{s}/} \xrightarrow{s^{2}} X_{\Delta^{2}_{s}/} \to X_{\Horn^{2}_{0,s}/}
		\end{align*}
		can be identified up to homotopy with 
		\begin{align*}
			s\timest s \colon X_{\Delta^{0}_{s}/} \to X_{\Delta^{1}_{s}/} \timest_{X_{\Delta^{0}_{s} / }} X_{\Delta^{1}_{s}/}\, .
		\end{align*}
		Therefore, $s^{2}$ factors through a map $X_{\Delta^{0}_{s} /} \to S^{*}X_{\Delta^{2}_{s}/}$. In particular, this means that for any vertex $x$, we can extend \cref{eq:sx_is_left_eq_1} as follows.
		\begin{equation*}
			\begin{tikzcd}
				\underline{\ast} \arrow[r, "s^{2}x"] \arrow[d] & S^{*} X_{\Delta^{2}_{s}/} \arrow[r] \arrow[d] & X_{\Delta^{2}_{s}/} \arrow[d] \\
				J \arrow[r] \arrow[d] \arrow[ur, dashed]     &   X_{\Delta^{1}_{s}/}    & X_{\Horn^{2}_{0,s}/} \arrow[d, "d_1"] \arrow[r, "d_{2}"] & X_{\Delta^{1}_{s}/} \arrow[d, "d_1"] \\
				K  \arrow[r] \arrow[uurr,dotted, bend right=15]       &   X_{\Delta^{0}_{s}/}  \arrow[r, "s"]            & X_{\Delta^{1}_{s}/} \arrow[r, "d_0"] & X_{\Delta^{0}_{s}/}
				\arrow[from=2-2, to=3-2,"d_1", crossing over]
				\arrow[from=2-2,to=2-3, "S"', crossing over]
			\end{tikzcd}
		\end{equation*}
		The dashed lift exists by \cref{lem:S*X_over_delta*2}. Then the dotted lift exists by \cref{cor:free_slice_fibration} because $d_1 \colon \Delta^{1}_{S} \to \Delta^{2}_{s}$ is right anodyne. The composites $d_2\circ S$ and $d_0 \circ s$ are homotopic to the identity, so we can push forward to obtain a solution of \cref{eq:sx_is_left_eq_1}.
	\end{proof}
	\begin{proof}[Proof of \cref{thm:outer_degenereacies}]
		Let $X$ be an inner Kan space which admits initial and terminal degeneracies $s_0$ and $s_{\omega}$. Then by \cref{lem:eq_fibration_condition}, and \cref{lem:sx_is_left_eq} and its dual, the edge $s_{0}x\sim s_{\omega} x$ is an equivalence in $X$. Moreover, the $2$-simplex $s^{2}x$ witnesses $sx$ as idempotent. 
	\end{proof}
	
	\printbibliography[heading=bibintoc, title=References]
\end{document}